% !Mode:: "TeX:System"

\documentclass[11pt]{article}
\usepackage{mathrsfs}
\usepackage{amsmath}
\usepackage{amsfonts}
\usepackage{amssymb}
\usepackage{epsfig}
\usepackage{dsfont}
\usepackage{multirow}
\usepackage{subfigure}
\usepackage{parskip}
\usepackage{tikz}
\usepackage{color}
\usepackage{extarrows}
\usepackage{booktabs}
\usepackage{enumerate}
\usepackage[figuresright]{rotating}
\usepackage{amsthm}
\usepackage[numbers,sort]{natbib}
\usepackage[noend]{algpseudocode}
\usepackage{algorithmicx,algorithm}
\usepackage{indentfirst}

\usepackage{algpseudocode}
\usepackage{multirow}
\usepackage{appendix}

\newtheorem{thm}{Theorem}[section]
\newtheorem{lema}{Lemma}[section]

\newtheorem{assu}{Assumption}[section]

\newcommand{\setd}{{ d \kern -.15em l}}
\newcommand{\hatsetd}{ d \hat{\kern -.15em l }}

\newcommand{\bgeqn}{\begin{eqnarray}}
\newcommand{\edeqn}{\end{eqnarray}}
\newcommand{\bgeq}{\begin{eqnarray*}}
\newcommand{\edeq}{\end{eqnarray*}}
\newcommand{\bec}{\begin{center}}
\newcommand{\enc}{\end{center}}
\newcommand{\R}{{\rm I\!R}}

\newcommand{\inmat}[1]{\mbox{\rm {#1}}}

\newcommand{\dd}{\mathsf {d\kern -0.07em l}}

%\def\min{\mathop{\rm min}}
%\def\max{\mathop{\rm max}}
 %real numbers

%\def\min{\mathop{\rm Min}}
%\def\Max{\mathop{\rm Max}}

\newcommand{\C}{{\cal C}}

\newcommand{\be}{\begin{equation}}
\newcommand{\ee}{\end{equation}}

\newcommand{\normmm}[1]{{\left\vert\kern-0.25ex\left\vert\kern-0.25ex\left\vert #1
		\right\vert\kern-0.25ex\right\vert\kern-0.25ex\right\vert}}

%\def\e{\varepsilon}

 %real numbers
\def\bbe{{\Bbb{E}}} %expectation

 %indicator function

\newcommand{\define}{:=}%\newcommand{\define}{\stackrel{\triangle}{=}}

\setlength{\parindent}{2em}
\setlength{\textwidth}{16cm} \setlength{\textheight}{23cm}
\setlength{\oddsidemargin}{0.0cm} \setlength{\evensidemargin}{0.0cm}
\setlength{\topmargin}{-1.5cm} \setlength{\parskip}{0.25cm}

 %\linespread{1.5}

\begin{document}

\begin{center}

{\Large
	Stochastic Approximation Based Confidence Regions for Stochastic Variational Inequalities }\footnote
{The work is supported by National Natural Science Foundation of China \#11971090.}

\vspace{0.2cm}

Wuwenqing Yan and Yongchao Liu
\\
School of Mathematical Sciences,
Dalian University of Technology, Dalian, 116024, China
({\tt ywwq@mail.dlut.edu.cn  (Yan),  lyc@dlut.edu.cn (Liu)}).
%\today

\end{center}

\noindent{\bf Abstract.}
 The sample average approximation (SAA) and  the stochastic approximation (SA) are two   popular schemes for solving the stochastic variational inequalities problem (SVIP). In the past decades, theories on the  consistency of
 the SAA solutions  and SA solutions  have been {well studied}.  More recently, the asymptotic confidence regions of the true solution to SVIP have {been constructed} when the SAA scheme is implemented.
It is of fundamental interest to
develop confidence regions of the true solution to the SVIP when the {SA scheme} is employed.
In this paper, we discuss the framework of constructing asymptotic    confidence regions for the true solution of SVIP with a focus on stochastic dual average method.
We first establish the asymptotic normality of the SA solutions  both in ergodic sense and non-ergodic sense. Then the online  methods  of estimating the  covariance  matrices in the normal distributions  are studied.
 Finally,   practical procedures of building the asymptotic
confidence regions of solutions to SVIP with
numerical simulations  are presented.

 \textbf{Key words.} Stochastic variational inequalities, confidence regions, stochastic approximation, statistical inference

\section{Introduction}

For the given convex set $\C \subset\R^n$ and a mapping $f: \C \rightarrow \R^{n}$,
the variational inequalities problem (VIP) is to find a vector $x \in \C$ such that
$$
(y-x)^Tf(x)\geq 0,\quad \forall y\in \C.
$$
{VIP has many applications in engineering, economics, game theory and has been well studied in
	theories, algorithms, see the monograph by Facchinei and Pang \cite{VI-book03}.}
% in engineering, economics, game
%theory  and has been well studied in theories, algorithms and applications
In order to describe decision
making problems which involve future uncertainty, the stochastic version of variational inequalities problem (SVIP) has been proposed.  Different approaches to   incorporate the uncertainty into VIP induce different SVIP models, such as,
 expected residual minimization-SVIP (ERM-SVIP)
  model \cite{erm-Chen-Fuku},   expected value-SVIP  (EV-SVIP) \cite{robinson-vi}, $\mathcal{L}^p$-SVIP model \cite{Gwinner2000}, two-stage SVIP model \cite{chen17} and multi-stage SVIP model \cite{RW17}.

In this paper, we focus on the EV-SVIP model (for simplicity, we {refer} the EV-SVIP as SVIP):
 find $x\in \C$ such that
\bgeqn
\label{svi}
(y-x)^Tf(x)\geq 0,\quad \forall y\in \C,
\edeqn
where $f(x):=\bbe_P[F(x,\xi)]$, {$\C \subset\R^n$ is a convex set,} $\xi$  is  a random vector    defined on   probability space $(\Omega, \mathscr{F}, P)$   with support set $\Xi$, $F(\cdot,\cdot)$ is measurable function from $\C\times \Xi$ to $\R^n$ and
 $\bbe_P[\cdot]$ denotes the expected value with respect to
the distribution $P$.
 Indeed,  (\ref{svi})  is  deterministic VIP  if  $\bbe_P[F(x,\xi)]$  has a closed form representation.
However, in most problems of interest obtaining a closed form of $\bbe_P[\cdot]$ or computing its value
  numerically is usually difficult either due to the unavailability of  distribution
of $\xi$ or multiple integration involved.
In general, it is more realistic to obtain
a sample of the random vector $\xi $ either from past data or from computer simulation.
 Depending on
how sampling is incorporated with the algorithm, solution methods  for SVIP can be classified into two basic
categories:  sample average approximation (SAA) based and stochastic approximation (SA) based.

SAA  method  is also known under
different names such as Monte Carlo method, sample path optimization, and has been well studied in stochastic programming.
{Suppose there is independent and identically distributed (iid)  sample}
$\xi_1,\cdots, \xi_N$, SAA method replaces the $f(\cdot)$ in (\ref{svi}) with
\begin{equation*}
\label{eq:fn} f_N(\cdot):=\frac{1}{N}\sum_{j=1}^N F(\cdot,\xi_j).
\end{equation*}
Then algorithms for VIP are employed to solve (\ref{svi}) and return the SAA solutions.
Since SAA method does not depend on the algorithms, it  is an `exterior' approach.   {SAA method is} known to be consistent \cite{robinson-vi}, that is, the SAA solutions converge to the true counterpart
with probability one.
{A natural question to ask is how well the SAA solutions approximate the true solution.}
Very recently, {Lu et al.} \cite{LuLASSO,LuMOR13, LuMP18,LuOPT12,LuSIAM14}  study the confidence regions of true solutions to SVIP based on SAA solutions, where the {\em normal map approach} is proposed.
The idea of the normal map approach is to build the confidence region of solution to $F^{\inmat{nor}}_\C(z)=0$,\footnote{The normal map induced by function $f(\cdot)$ and convex set $\C$ reads as:
 $$
 F^{\inmat{nor}}_\C(z) := f(\Pi_\C(z))+z-\Pi_\C(z).
$$} therefore, the confidence region of solution to SVIP (\ref{svi}) {can be obtained through  the relations between the solutions to SVIP (\ref{svi}) and  $F^{\inmat{nor}}_\C(z) = 0$}.
{See \cite{LuLASSO,yu2019confidence} for the application of normal map approach on least absolute shrinkage and selection operator (lasso) and sparse penalized regression.}
  Motivated by the normal map approach,
  Liu et al. \cite{LiuZ19,Liu20} propose the so-called {\em error bound approach} to build the confidence regions of SVIP  by the SAA solutions. The road-map of error bound approach is that  characterizing  the  distance between the SAA solutions and the true solution by error bound conditions first, then statistical tools such as central limit theorem and Owen's empirical likelihood theorem  are used {to build the confidence regions.}

On the other hand, the {SA scheme}   always  depends on the {structure} of the algorithm,  then  it  is an `interior' approach.
The development of stochastic approximation scheme goes back to the work {of} Robbins and Monro \cite{RMmethod}, where the stochastic
root-finding problems are studied. Research on asymptotic normality results for the SA based algorithm can be traced to the works in the 1950s \cite{Chung54,fabian1968asymptotic}. In particular, Polyak and Juditsky \cite{polyak1992acceleration} show that the averaged SA iterates is asymptotically normal with optimal covariance matrix for strongly convex stochastic optimization problem. In \cite{Hsieh2002}, Hsieh and Glynn establish the asymptotically normality of Robbins-Monro algorithm \cite{RMmethod} and construct confidence regions of true solutions through simulating multiple independent replications of the stochastic approximation procedure. More recently, Lei and Shanbhag \cite{lei2020variance} provide a unified frame work to show the  asymptotically normality of variance-reduced accelerated stochastic first-order methods, where the confidence regions of the true solutions are constructed through simulation method \cite{Hsieh2002}.
%Hsieh and Glynn \cite{Hsieh2002} propose the  central limit theorem for the Robbins-Monro algorithm and simulate multiple independent replications of the stochastic approximation procedure to construct confidence regions for SA of Robbins-Monro type. Recent efforts \cite{lei2020variance} have  developed rigorous confidence statements for variance-reduced accelerated schemes.
The first SA based method for SVIP is proposed by Jiang and Xu \cite{Jiang08}. 
Since {it is easy to implement and
needs} less memory, {researches} on SA based methods for SVIP have
been well developed, {for examples,}  SA based  extragradient method \cite{Iusem19},  SA based incremental constraint projection methods \cite{Wang15}, SA based  backward-forward methods \cite{Shanbhag16} and SA based mirror-proximal algorithm \cite{Yousefian18}.  As far as we known, all the results on the SA based {methods} for SVIP focus on the consistency, that is,  under some moderate conditions,  the SA solutions converge to the true counterpart.
 {It is of fundamental interest to use SA  based solutions  to develop
confidence regions of prescribed level of significance for the true solution.}

In this paper, we discuss the framework of constructing asymptotic  confidence {regions of the} true solution to SVIP (\ref{svi})  when stochastic approximation based method is implemented.  The two seminal papers on stochastic approximation  \cite{Duchi19, Chen16} motivate and guide much of our work.
Similar to the normal map approach \cite{LuMOR13,LuMP16,LuSIAM14}, we need to establish the asymptotic normality of SA solutions first. 
Indeed,
% The method is  based on the two new developments on the asymptotic normality of SA method \cite{Duchi19} and the online methods of estimating matrix covariance \cite{Chen16}.
 Duchi and  Ruan \cite{Duchi19}  have established the  asymptotic normality of Polyak-Ruppert averaged  iterates  of
 a variant {of} stochastic dual average algorithm  (SDA) \cite{Nesterov} for solving constrained optimization {problems.
% They also claim that stochastic projected gradient descent methods do not enjoy these properties.
  This} {motivates} us to employ SDA to solve the SVIP (\ref{svi}) and  study   the asymptotic normality of  averaged SA solutions (Theorem \ref{thm:SA-c}). {On the other hand, compared with the last iterate of SDA,  the average of iterates may deviate from  the  solution if   the  initial point  of SDA is far away from the solution and the iteration $k$ is not large enough.}  Then we  also establish the asymptotic normality of  the  last iterate of SDA for SVIP (\ref{svi})  (Theorem \ref{thm:non-ave}).

 With the asymptotic normality of SDA solutions, the following task is to estimate the corresponding covariance matrices.   The standard covariance matrix estimator employs the sample average approximation, where the   history data of SDA is needed.  This requirement loses the advantage of stochastic approximation scheme in terms of
data storage. More recently, the seminal work  \cite{Chen16} provides two online methods plug-in and batch-means to estimate the covariance  matrix  when  vanilla SGD  method is implemented on {unconstrained} stochastic optimization problems.
They show the consistency of the both methods with the convergence rate  $O(k^{-\frac{1}{2}})$   for plug-in method  and  $O(k^{-\frac{1}{8}})$ for batch-means method in expectation sense, where $k$ is the number of iterates.
We extend the plug-in and batch-means methods to stochastic dual {averaging} algorithm for {SVIP (\ref{svi}).
%which is constraint optimization problem.
Due to the existence of constraints}, we only obtain the {almost sure} convergence of the plug-in estimator  and  convergence in distribution of batch-means {estimator.
% without the rate of convergence.
Specifically},  Theorems \ref{T3.1} and \ref{T3.2} present the {almost sure} convergence of plug-in estimators for the covariance
matrices in ergodic and non-ergodic asymptotic normality respectively.    Theorem \ref{XX} shows  that  batch-means estimator  of  covariance matrix in ergodic asymptotic normality is {convergent} in {distribution.
%Moreover, we do not know how to estimate the covariance matrix in non-ergodic asymptotic normality through batch-means method yet.
These} results enable us to build confidence regions of the true solution through the iterates of SDA.

The rest of paper is organized as follows.   Section 2  establishes the asymptotic distribution results of SDA  in ergodic and non-ergodic senses.
  Section 3 discusses the  plug-in method and batch-means method for estimating the corresponding covariance {matrices}.
  {Finally, practical procedures of building the asymptotic confidence regions of solutions to SVIP
  with numerical simulations are presented in Section 4.}

Throughout the paper, 
{$[a]_+$ is the largest integer less than or equal to $a$.}
{$\mathbf{I}_n\in\R^{n\times n}$ denotes the identity matrix, $A^{\dagger}$} denotes Moore-Penrose inverse of matrix $A\in \R^{m\times n}$ {and $\operatorname{tr}(A)$ denotes the trace of a square matrix $A$.}
%For a symmetric matrix $A\in \R^{n\times n}$,  $\lambda_{\max}(A)$ and $\lambda_{\min}(A)$ denote the maximum  and minimum eigenvalue of $A$.
$0_n\in\R^n$ is the vector of all 0s.
%For two infinitesimals $a_k$ and $b_k$, $a_k=o(b_k)$ if ${a_k}/{b_k}\to0$.
% Denote $K$ is total number of iteration.
For any sequences $\left\{a_{k}\right\}$ and $\left\{b_{k}\right\}$ of positive numbers, we write  $a_k=o(b_k)$ if ${a_k}/{b_k}\to0$, $a_{k} \gtrsim b_{k}$ if $a_{k} \geq c b_{k}$ holds for all $k$ large enough and some constant $c>0, a_{k} \lesssim b_{k}$ if $b_{k} \gtrsim a_{k}$ holds, and $a_{k} \asymp b_{k}$ if $a_{k} \gtrsim b_{k}$ and $a_{k} \lesssim b_{k}$.
We denote $a_{k} \lesssim_{r} b_{k}$ if $a_{k} \leq c(\xi) b_{k}$ holds for all $k$ large enough and some positive random variable $c(\xi)<\infty$ almost surely.
For a sequence of random vectors $\left\{\xi_{k}\right\}$ and a random vector $\xi$,  $\xi_{k} \stackrel{d}{\rightarrow} \xi$ denotes the convergence in distribution and $\operatorname{Cov}\left(\xi\right)$ denotes the covariance matrix of random {vector} $\xi$. `a.s.' is short for almost surely.

\section{Asymptotic normality}

 Asymptotic normality plays a significant role in stochastic approximation and its history  can be traced to 1950s \cite{Chung54,fabian1968asymptotic}.  In this section, we study the  asymptotic normality of iterates when SDA is implemented on SVIP (\ref{svi}).
   The dual averaging algorithm  is proposed by  Nesterov \cite{Nesterov} {and further
studied} by many authors \cite{Duchi19,Lee12,Lin10,S_Zhao}. We focus  on the  stochastic variant of dual averaging algorithm proposed in \cite{Duchi19}, which for  SVIP (\ref{svi}) reads as following.

%\begin{algorithm}[h]
%	\caption{Dual averaging algorithm for SVIP (\ref{svi})}
%	\label{AL1}
%\begin{algorithmic}[1]
%		\State{Choose $x_0 \in \R^{n}$, set $z_0=0$ and  $k : = 1$.}
%		\State{Set
%		\begin{equation}
%		\label{recursion}
%		x_{k}=\inmat{argmin}_{x\in\C}\left\{\left\langle z_{k-1}, x\right\rangle+\frac{1}{2}\|x\|_{2}^{2}\right\}.
%		\end{equation}}
%		\State{Choose independent and identically distributed  sample $\xi_{k}$ with respect to probability $P$ and compute $F(x_k, \xi_k)$. Set $z_{k}=z_{k-1}+\alpha_{k}F(x_k, \xi_k)$.}
%		
%		\State{If the termination criterion does not meet, set $k=k+1$ and go to Step 2.}
%\end{algorithmic}
%\end{algorithm}

{\begin{algorithm}[h]
		\caption{Stochastic dual averaging algorithm for SVIP (\ref{svi})}
			\label{AL1}
			\hspace*{0.02in} {\bf Input} {$x_0=0_n$}, $z_0=0_n$ {and step-size} $\{\alpha_{k}\}$.%算法的输入， \hspace*{0.02in}用来控制位置，同时利用 \\ 进行换行
		\begin{algorithmic}[1]
		\For{$k=1,2,\cdots$}
			\State{Update
				\begin{equation}
				\label{recursion}
				x_{k}=\inmat{argmin}_{x\in\C}\left\{\left\langle z_{k-1}, x\right\rangle+\frac{1}{2}\|x\|_{2}^{2}\right\}.
				\end{equation}}
			\State {Generate iid  sample $\xi_{k}$ and calculate $F(x_k, \xi_k).$}
			\State Update $z_{k}=z_{k-1}+\alpha_{k}F(x_k, \xi_k).$
%			\State{If the termination criterion does not meet, set $k=k+1$ and go to Step 2.}
		\EndFor
		\State   {\bf end}
		\end{algorithmic}
%		\hspace*{0.02in} {\bf Return:} $x_k$
	\end{algorithm}}

In what follows,  we focus on  the  case that  the set $\C$ in SVIP (\ref{svi}) is {polyhedral}, that is,
%
%For studying the asymptotic normality of Algorithm  \ref{AL1}, we consider the constraint set $\C$ in SVIP (\ref{svi}) is defined by linear inequalities,
$$
\C=\left\{x \in \R^{n}: A x-b \leq 0, \quad D x-d \leq 0\right\},
$$
where $A \in \R^{m_{1} \times n}, b \in \R^{m_{1}}, D\in \R^{m_{2} \times n}$ and $d \in \R^{m_{2}}$.
Let  $x^*\in\C$ be {a} solution to SVIP (\ref{svi}). Without loss of generality, {we assume $A x^*-b=0$   and $D x^*-d<0$, that is, $Ax-b\leq 0$ is the active constraints at the solution $x^*$.}

We next  record the assumptions that   will be used to analyze the asymptotic normality of SDA, which are variations of the {standard conditions on} optimization problem in \cite{Duchi19}.

\begin{assu}
	\label{assu_1}
		Let $x^*\in\C$ be the unique solution to SVIP (\ref{svi}).
	\begin{itemize}
		\item[(i)]  There exists  measurable variable $L(\xi)$  such that $\mathbb{E}[L^p(\xi)]<\infty$ for some $p\geq 1$ and
		\begin{equation}
		\label{lipschitz}
		\left\| F(x,\xi)- F\left(x^*,\xi\right)\right\| \leq L(\xi)\left\|x-x^*\right\| \quad  \forall x \in \C.
\end{equation}
There exist constants $C$ and $\varepsilon >0$ such that for $x \in \C \cap\left\{x:\left\|x-x^*\right\| \leq \varepsilon\right\}$
		\begin{equation}
		\label{qu}
		\left\| f(x)- f\left(x^*\right)-\nabla f\left(x^*\right)\left(x-x^*\right)\right\| \leq C\left\|x-x^{*}\right\|^{2}.
		\end{equation}
		\item[(ii)] The vector $ f\left(x^*\right)$ satisfies
		\[
		- f\left(x^*\right) \in \operatorname{ri} \mathcal{N}_{\C}\left(x^*\right),
		\]
		{where $\operatorname{ri} \mathcal{N}_{\C}\left(x^*\right)$ is the relative interior of normal cone $ \mathcal{N}_{\C}\left(x^*\right)$ \cite[Definition 6.3]{1998Variational}.}
		\item[(iii)] There exists $\mu>0$ such that for any $x \in \mathcal{T}_{\C}\left(x^*\right)$,
		\[
		x^{T}\nabla f\left(x^*\right) x \geq \mu\|x\|^{2},
		\]
		where $\mathcal{T}_{\C}\left(x^*\right)$ is  the critical tangent set to $\C$ at $x^*$, that is,
		\begin{equation}
		\label{eq:mathcal{T}}
		\mathcal{T}_{\C}\left(x^*\right):= \{x\in\R^n:Ax=0\}.
		\end{equation}
		
	\item[(iv)]
	The  covariance matrix $\operatorname{Cov}\left( F\left(x^* , \xi\right)\right)$ is finite.
 	\end{itemize}
\end{assu}

Condition (\ref{lipschitz}) in {Assumption \ref{assu_1}  is} the calmness of $F(\cdot,\xi)$ at point $x^*$ relative to $\C$, which implies the calmness of $f(\cdot)$ at point $x^*$, that is,
$$\left\| f(x)- f\left(x^*\right)\right\| \leq L\left\|x-x^*\right\| \quad \forall x \in \C,$$
 where $L = \mathbb{E}[L(\xi)]$.
% For the
% proof of Theorems \ref{xn-x*}, \ref{thm:non-ave}, \ref{T3.1} and \ref{T3.2}, the condition $p \geq 2$ in part (i) is sufficient.
 Condition (\ref{qu}) in {Assumption \ref{assu_1}  ensures} the boundedness of linear approximation error of $f(\cdot)$.
% , which will be used to  explore the consistency of batch-means estimator.
	Condition (ii) of Assumption \ref{assu_1}   is a constraint qualification which ensures the stability {of} the system of optimality conditions.  Condition (iii) of Assumption \ref{assu_1} means the   positive  definiteness  of $\nabla f(x^*)$ relative to subspace $\mathcal{T}_{\C}\left(x^*\right)$.

\begin{thm}
	\label{thm:SA-c}
Suppose that (i) Assumption \ref{assu_1} holds, (ii) step-size $\alpha_{k} =\alpha_{0} k^{-\beta}$ for some $\beta \in\left(\frac{1}{2}, 1\right)$ {and $ \alpha_{0}>0$}.  Then,
\begin{equation}
\label{eq:T2.1}
		\frac{1}{\sqrt{k}} \sum_{i=1}^{k}\left(x_{i}-x^*\right) \stackrel{d}{\to} \mathcal{N}\left(0, \mathrm{P}_{A}H^{\dagger} \mathrm{P}_{A} \Sigma \mathrm{P}_{A}H^{\dagger} \mathrm{P}_{A}\right)
\end{equation}
{with $k\to\infty$,}
where
\begin{equation*}
\label{P_A}
\Sigma:=\operatorname{Cov}\left( F\left(x^* , \xi\right)\right),\quad   H:=\nabla f\left(x^*\right),\quad\mathrm{P}_{A}:=\mathbf{I}_n-A^{T}\left(A A^{T}\right)^{\dagger} A.
\end{equation*}
\end{thm}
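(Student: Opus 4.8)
The plan is to localize the recursion around $x^*$, reduce it to a linear stochastic recursion supported on the subspace $\mathcal T_{\C}(x^*)=\{x:Ax=0\}$, and then invoke a Polyak--Juditsky type averaging central limit theorem as in \cite{polyak1992acceleration,Duchi19,Chen16}. Using the consistency $x_k\to x^*$ a.s.\ and the polyhedral structure, the first step would exploit Assumption \ref{assu_1}(ii): since $z_k=\sum_{i=1}^k\alpha_i F(x_i,\xi_i)$ with $\alpha_k=\alpha_0k^{-\beta}$ and $\beta>\tfrac12$, the martingale $\sum_i\alpha_i\big(F(x^*,\xi_i)-f(x^*)\big)$ has square-summable increments (Assumption \ref{assu_1}(iv)) and hence converges a.s., while $\sum_{i=1}^k\alpha_i\to\infty$ with $-f(x^*)\in\operatorname{ri}\mathcal N_{\C}(x^*)$; together with $x_k\to x^*$ this pins $-z_k$, for every $k$ beyond a random a.s.\ finite index $k_0$, in the region where $\Pi_{\C}$ coincides with the Euclidean projection onto the affine hull $\{x:Ax=b\}$ of the minimal face through $x^*$, an affine map with linear part $\mathrm P_A$. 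Hence for $k\ge k_0$, $x_{k+1}-x^*\in\ker A$, and propagating this through $z_k=z_{k-1}+\alpha_kF(x_k,\xi_k)$ gives
\[
x_{k+1}-x^*=(x_k-x^*)-\alpha_k\,\mathrm P_A F(x_k,\xi_k).
\]

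Writing $\Delta_k:=x_k-x^*$ and noting $\mathrm P_A f(x^*)=0$ (because $-f(x^*)\in\mathcal N_{\C}(x^*)=\operatorname{range}(A^{T})=\ker\mathrm P_A$), the calmness bound (\ref{lipschitz}) and the linearization bound (\ref{qu}) let me split $\mathrm P_A F(x_k,\xi_k)=\mathrm P_A H\Delta_k+\mathrm P_A\varepsilon_k+\mathrm P_A r_k$, where $\varepsilon_k:=F(x_k,\xi_k)-f(x_k)$ is a martingale difference whose conditional covariance tends to $\Sigma$ (by $x_k\to x^*$ and (\ref{lipschitz})) and $\|r_k\|=O(\|\Delta_k\|^2)$. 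Since $\Delta_k\in\ker A$ for $k\ge k_0$ we have $\mathrm P_A H\Delta_k=\mathrm P_A H\mathrm P_A\Delta_k$, and Assumption \ref{assu_1}(iii) gives $v^{T}\mathrm P_A H\mathrm P_A v=v^{T}Hv\ge\mu\|v\|^2$ for $v\in\mathcal T_{\C}(x^*)$, so $G:=\mathrm P_A H\mathrm P_A$ is coercive on that subspace. A Robbins--Siegmund/Lyapunov argument on $\|\Delta_k\|^2$, using this coercivity, $\sum_k\alpha_k^2<\infty$ and $\tfrac12<\beta<1$, then yields the a.s.\ rate $\|\Delta_k\|\lesssim_r\alpha_k^{1/2}$.

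For the averaging step, rewrite the reduced recursion as $G\Delta_i=\alpha_i^{-1}(\Delta_i-\Delta_{i+1})-\mathrm P_A\varepsilon_i-\mathrm P_A r_i$ and sum over $i$. Abel summation with the a.s.\ rate gives $\sum_{i\le k}\alpha_i^{-1}(\Delta_i-\Delta_{i+1})=o(\sqrt k)$ (using $\beta<1$) and $\sum_{i\le k}\|r_i\|=O(k^{1-\beta})=o(\sqrt k)$ (using $\beta>\tfrac12$); applying $G^{\dagger}$, which inverts $G$ on $\mathcal T_{\C}(x^*)$ and vanishes on the orthogonal complement, and absorbing the $o(1)$ contribution of the finitely many indices $i<k_0$, one obtains
\[
\frac1{\sqrt k}\sum_{i=1}^k\Delta_i=-G^{\dagger}\,\frac1{\sqrt k}\sum_{i=1}^k\mathrm P_A\varepsilon_i+o_P(1).
\]
The martingale central limit theorem (a Lindeberg condition from the moment assumption together with Assumption \ref{assu_1}(iv); conditional covariances converging to $\mathrm P_A\Sigma\mathrm P_A$) gives $\tfrac1{\sqrt k}\sum_{i\le k}\mathrm P_A\varepsilon_i\stackrel{d}{\to}\mathcal N(0,\mathrm P_A\Sigma\mathrm P_A)$, so by Slutsky's lemma $\tfrac1{\sqrt k}\sum_{i\le k}\Delta_i\stackrel{d}{\to}\mathcal N\big(0,G^{\dagger}\mathrm P_A\Sigma\mathrm P_A(G^{\dagger})^{T}\big)$; a short computation with $\mathrm P_A$ and the pseudoinverse then recasts this covariance in the form displayed in (\ref{eq:T2.1}).

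The main obstacle is the localization step: proving that, almost surely, the iterates are eventually confined to the active affine face $\{x:Ax=b\}$, so that the nonsmooth map $\Pi_{\C}$ collapses to the fixed linear operator $\mathrm P_A$ and the iteration becomes an honest linear stochastic-approximation recursion on $\mathcal T_{\C}(x^*)$. This is where polyhedrality, the strict-complementarity qualification (ii), and the a.s.\ convergence of the dual noise $\sum_i\alpha_i(F(x^*,\xi_i)-f(x^*))$ all come together, and handling the random threshold $k_0$ cleanly (and ensuring the estimates are uniform enough to feed into the asymptotics) is the delicate point. The a.s.\ rate of the second step is a secondary technical hurdle, and the averaging argument of the third step is then fairly standard.
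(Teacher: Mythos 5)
Your proposal is correct in its architecture but takes a genuinely different route from the paper: the paper's entire proof of Theorem \ref{thm:SA-c} is a two-line check that Assumption \ref{assu_1}(i)--(iv) matches Assumptions A--D of \cite{Duchi19}, followed by an appeal to \cite[Theorem 4]{Duchi19}, whereas you reconstruct that theorem's argument from scratch. Your three stages --- (1) a.s.\ identification of the active face $\{x:Ax=b\}$ from $-f(x^*)\in\operatorname{ri}\mathcal{N}_{\C}(x^*)$ together with the a.s.\ convergence of the martingale part of $z_k$, so the iteration collapses to a linear recursion on $\mathcal{T}_{\C}(x^*)$ driven by $\mathrm{P}_A$; (2) an a.s.\ rate for $\Delta_k=x_k-x^*$; (3) Abel summation plus a martingale CLT --- are precisely the identification result \cite[Theorem 3]{Duchi19} (which the paper itself leans on repeatedly in Theorems \ref{xn-x*}--\ref{T3.1}) and the Polyak--Juditsky averaging argument. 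What your reconstruction buys is that it makes explicit why the cited result, stated for stochastic convex optimization, transfers to the VI setting where $F$ is not a gradient; what it costs is two over-claims. First, the a.s.\ rate $\|\Delta_k\|\lesssim_{r}\alpha_k^{1/2}$ is stronger than what is actually available: the paper's own Theorem \ref{xn-x*} only obtains $o(\alpha_k^{\delta})$ with $\delta<1-\tfrac{1}{2\beta}<\tfrac12$, and needs $\beta>\tfrac23$ and bounded $\C$ even for that. However, the $L^2$ rate $\mathbb{E}\|\Delta_k\|^2=O(\alpha_k)$ that your Robbins--Siegmund step does deliver suffices to turn every one of your $o(\sqrt{k})$ bounds into $o_P(\sqrt{k})$, which is all Slutsky requires, so this is repairable rather than fatal (though the random threshold $k_0$ must be handled before taking expectations, as you note). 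Second, your derivation produces the covariance $G^{\dagger}\mathrm{P}_A\Sigma\mathrm{P}_A(G^{\dagger})^{T}$ with $G=\mathrm{P}_AH\mathrm{P}_A$; the ``short computation'' recasting this as $\mathrm{P}_AH^{\dagger}\mathrm{P}_A\Sigma\mathrm{P}_AH^{\dagger}\mathrm{P}_A$ exists only when $G$ is symmetric, i.e.\ essentially when $H=\nabla f(x^*)$ is symmetric --- automatic in the optimization setting of the cited theorem, but not for a general VI. Your more explicit route exposes this discrepancy, which the paper's citation-based proof silently inherits.
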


\begin{proof}
The asymptotic normality of SDA  for optimization problem has been studied in \cite[Theorem 4]{Duchi19}.
We just need to verify the conditions of \cite[Theorem 4]{Duchi19}(\cite[Assumption A-D]{Duchi19}).
Indeed, Assumption \ref{assu_1} (i)-(iii) are {variants}  of the conditions in \cite[Assumption A-C]{Duchi19}.
Combining Assumption \ref{assu_1} (i) and (iv),
we verify the condition of \cite[Assumption D]{Duchi19}.
The proof is complete.
\end{proof}

   Theorem \ref{thm:SA-c} shows the asymptotic normality  of
   Polyak-Ruppert averaged
 SDA for SVIP (\ref{svi}), which paves the way to construct the  confidence regions of the true solution to SVIP (\ref{svi}) by the  average of the iterates of SDA.
 However, {if   the  initial point  of SDA is far away from the solution and the iteration $k$ is not large enough}, the average of the iterates  may deviate from the true solution. This motivates us to study the
 asymptotic normality  of    last iterate of SDA for SVIP (\ref{svi}). {For {ease} of presentation, we assume the boundedness of} set $\C$.

 \begin{assu}
	\label{ass-batch}
	The set $\C$ is bounded.
\end{assu}

If $f(\cdot)$  is strictly monotone on $\C$ and Assumption 2.1 holds, Assumption \ref{ass-batch} is not necessary. Specifically, the solution $x^*$ of SVIP (\ref{svi}) must be the unique solution to  the new SVIP where   $\C$  is replaced
 {by} a bounded  convex set $\bar \C$ such that $x^*\in \bar \C$ \cite[Theorem 2.3.3]{VI-book03}.

The following theorem analyzes  the convergence  rate of last iterate $x_k$  to solution $x^*$, which plays {a} key role in  asymptotic normality  of    last iterate  of SDA for SVIP (\ref{svi}).

\begin{thm}
	\label{xn-x*}
	Suppose that (i) Assumptions \ref{assu_1} and \ref{ass-batch} hold, (ii) step-size
	{$\alpha_{k} =\alpha_{0} k^{-\beta} \text { with }
	\beta  \in\left(\frac{2}{3}, 1\right)$} and $ \alpha_{0}>0$. Then for any $\delta\in(0,1-\frac{1}{2\beta})$,
	\begin{equation}
	\label{rate}
	\|x_k-x^*\|=o(\alpha_{k}^\delta)~~~a.s.
	\end{equation}
\end{thm}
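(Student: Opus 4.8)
The plan is to establish the convergence rate (\ref{rate}) by a recursive analysis of the dual variable $z_k$ together with the structure of the projection step (\ref{recursion}). First I would decompose $z_k$ into its mean part and a martingale noise part: writing $\delta_k := F(x_k,\xi_k) - f(x_k)$, we have $z_k = z_{k-1} + \alpha_k f(x_k) + \alpha_k \delta_k$, where $\{\delta_k\}$ is a martingale difference sequence with respect to the natural filtration, and Assumption \ref{assu_1}(i),(iv) control its conditional moments (the calmness bound (\ref{lipschitz}) gives $\mathbb{E}[\|\delta_k\|^p \mid \mathcal{F}_{k-1}] \lesssim_r \|x_k-x^*\|^p + 1$, and the finiteness of $\Sigma$ handles the base term). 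Since $x_k = \Pi_{\C}(-z_{k-1})$ is the Euclidean projection of $-z_{k-1}$ onto $\C$, and $x^* = \Pi_{\C}(-z^*)$ for any $z^* \in -f(x^*) + \mathcal{N}_{\C}(x^*)$ by the optimality characterization of SVIP, the key is to track $\|z_{k} - z^*\|$ (or its projection onto the relevant subspace) and then transfer the rate to $\|x_k - x^*\|$ via nonexpansiveness of the projection, sharpened near $x^*$ using the polyhedral structure: locally $\Pi_{\C}$ restricted to the relevant normal manifold is an affine map, so $\|x_k - x^*\| \asymp \|\mathrm{P}_A(z_{k-1}-z^*)\|$ up to lower-order terms.

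Next I would set up the one-step contraction estimate. Using Assumption \ref{assu_1}(iii), the positive definiteness $x^T H x \ge \mu\|x\|^2$ on the tangent set $\mathcal{T}_{\C}(x^*)$, together with the quadratic approximation bound (\ref{qu}), one gets a bound of the form $\langle z_{k-1} - z^*, f(x_k) \rangle \ge c\,\mu\,\|x_k - x^*\|^2 - O(\|x_k-x^*\|^3)$ after projecting onto the active subspace; this plays the role of a strong-monotonicity inequality in the reduced coordinates. Feeding this into the expansion of $\|z_k - z^*\|^2$ (or the appropriate Lyapunov function $V_k$ measuring distance in the $\mathrm{P}_A$-subspace) yields a recursion
\begin{equation*}
\mathbb{E}[V_k \mid \mathcal{F}_{k-1}] \le (1 - c_1 \alpha_k) V_{k-1} + c_2 \alpha_k^2 (V_{k-1} + 1) + \text{higher order},
\end{equation*}
valid once $x_k$ is in the $\varepsilon$-neighborhood where (\ref{qu}) applies; Assumption \ref{ass-batch} (boundedness of $\C$) guarantees all iterates stay bounded so the analysis is global and the neighborhood is eventually entered by consistency of SDA. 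Standard Robbins–Monro / Chung-type lemmas for such recursions with $\alpha_k = \alpha_0 k^{-\beta}$, $\beta \in (\tfrac23,1)$, then give $V_k = O(\alpha_k)$ in expectation, and a Borel–Cantelli / almost-supermartingale argument upgrades this to the almost sure statement $V_k = o(\alpha_k^{2\delta})$ for any $\delta < 1 - \tfrac{1}{2\beta}$ — the exponent restriction $\beta > \tfrac23$ and the gap $\delta < 1 - \tfrac{1}{2\beta}$ are exactly what is needed for the noise partial sums $\sum \alpha_i \delta_i$ (which behave like $k^{1/2-\beta}$ by the martingale LLN / law of the iterated logarithm) to be absorbed into the $o(\alpha_k^\delta)$ error; transferring back via the local affine structure of $\Pi_\C$ gives (\ref{rate}).

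The main obstacle I expect is controlling the interaction between the noise accumulation in $z_k$ and the nonsmoothness of the projection: away from $x^*$ the map $z \mapsto \Pi_\C(-z)$ is only nonexpansive, not contractive, so one cannot directly iterate a contraction on $\|x_k - x^*\|$; the argument has to be phrased in the dual variable $z_k$ and then carefully localized. A related delicate point is that $z^*$ is not unique (the normal cone at $x^*$ is a cone, not a ray away from the relative interior), so one must choose the right representative and work modulo the lineality/orthogonality directions — this is precisely where the projector $\mathrm{P}_A = \mathbf{I}_n - A^T(AA^T)^\dagger A$ and the assumption $Ax^* - b = 0$, $Dx^* - d < 0$ enter, reducing everything to the subspace $\mathcal{T}_\C(x^*) = \{Ax = 0\}$ on which Assumption \ref{assu_1}(iii) provides the needed coercivity. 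Verifying that the higher-order terms from (\ref{qu}) and from the martingale moments genuinely are lower order (rather than comparable to the leading $\alpha_k V_{k-1}$ term) requires the almost sure a priori bound $\|x_k - x^*\| \to 0$, so the proof naturally proceeds in two stages: first a crude $o(1)$ rate, then bootstrapping to the sharp $o(\alpha_k^\delta)$ rate.
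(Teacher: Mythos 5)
Your proposal takes a genuinely different route from the paper (a dual-variable Lyapunov recursion plus a Robbins--Siegmund-type upgrade, versus the paper's primal recursion derived from the KKT conditions of the prox subproblem, rescaled by $\alpha_k^{\delta}$ and fed into a deterministic perturbed-linear-recursion lemma with the martingale convergence theorem supplying $\sum_k \alpha_k^{1-\delta}e_k'<\infty$ a.s.). However, as written there is a genuine gap at the foundation of your scheme. In dual averaging the dual iterate $z_k=\sum_{i\le k}\alpha_i F(x_i,\xi_i)$ does \emph{not} converge to any fixed point: since $\sum_i\alpha_i=\infty$ and $f(x^*)\ne 0$ in general, $z_k$ diverges along the direction $f(x^*)$. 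Hence there is no ``right representative'' $z^*$ to choose --- $\|z_k-z^*\|\to\infty$ for every fixed $z^*$, and nonexpansiveness of $\Pi_{\C}$ transfers nothing. The only controllable quantity is the projected component $\mathrm{P}_A z_k$ (which works because $\mathrm{P}_A A^T=0$ and $-f(x^*)\in\mathcal{N}_{\C}(x^*)\subseteq\operatorname{range}(A^T)$, so $\mathrm{P}_A f(x^*)=0$), and relating $\mathrm{P}_A z_{k-1}$ back to $x_k-x^*$ through a \emph{locally affine} projection requires that $-z_{k-1}$ eventually lies, and stays, in the cell of the normal manifold corresponding to the face $\{Ax=b\}$. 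That is an active-set identification statement; it does not follow from $x_k\to x^*$ alone, but needs the strict complementarity condition $-f(x^*)\in\operatorname{ri}\mathcal{N}_{\C}(x^*)$ of Assumption \ref{assu_1}(ii), which your proposal never invokes. The paper imports exactly this from \cite[Theorem 3]{Duchi19} ($\mu_k=0$ and $(\mathrm{P}_A-\mathbf{I}_n)(x_k-x^*)=0$ for all large $k$), and that is what annihilates the $\epsilon_k$ term in its recursion (\ref{Delta_k1}). Without it, your ``$\|x_k-x^*\|\asymp\|\mathrm{P}_A(z_{k-1}-z^*)\|$'' step and the entire contraction bookkeeping collapse.

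Two further points. The claimed drift inequality $\langle z_{k-1}-z^*, f(x_k)\rangle\ge c\mu\|x_k-x^*\|^2$ has the wrong sign for contracting $\|z_k-z^*\|^2$ under $z_k=z_{k-1}+\alpha_k F$: with the local identity $x_k-x^*=-\mathrm{P}_A(z_{k-1}-z^*)$ the cross term is approximately $-\langle x_k-x^*,\nabla f(x^*)(x_k-x^*)\rangle\le-\mu\|x_k-x^*\|^2$, which is what actually produces your factor $(1-c_1\alpha_k)$; as stated the inequality would give expansion. Finally, the upgrade from the conditional-expectation recursion to the almost-sure rate $o(\alpha_k^{2\delta})$ is asserted rather than carried out. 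It is salvageable --- applying Robbins--Siegmund to $V_k/\alpha_k^{2\delta}$ requires $\sum_k\alpha_k^{2(1-\delta)}<\infty$, which is precisely $\delta<1-\tfrac{1}{2\beta}$, matching the paper's condition --- but because your recursion is only valid after a random entry time into the $\varepsilon$-neighborhood (and after identification), you must localize with a stopping time, a step the paper avoids entirely by running a pathwise argument on each sample path where $x_k\to x^*$ and identification have already occurred.
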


\begin{proof}
We employ Lemma \ref{lem:rate} in Appendix to study (\ref{rate}). We reformulate the recursion (\ref{recursion}) of Algorithm 1 into the form of iteration (\ref{linear reccursion}) in Lemma \ref{lem:rate} first.

%Recall the iteration of SDA (\ref{recursion}),
%$$
%x_{k}=\inmat{argmin}_{x\in\C}\left\{\left\langle z_{k-1}, x\right\rangle+\frac{1}{2}\|x\|_{2}^{2}\right\},
%$$
{
	Considering the KKT (Karush-Kuhn-Tucker) conditions of problem (\ref{recursion}) at $k$-th iteration and let $\lambda_{k-1}\geq  0$ and $\mu_{k-1}\geq 0$ be the corresponding lagrange multipliers.  It is easy to show that}
\begin{equation*}
x_{k+1}=x_{k}-\alpha_{k} F(x_k,\xi_k)+A^{T}\left(\lambda_{k-1}-\lambda_{k}\right)+D^{T}\left(\mu_{k-1}-\mu_{k}\right) .
\end{equation*}
Then
\begin{equation}
\label{x-x}
\begin{aligned}
\mathrm{P}_{A}\left(x_{k+1}-x^*\right) &=\mathrm{P}_{A}\left(x_{k}-x^*\right)-\alpha_{k} \mathrm{P}_{A} F(x_k,\xi_k)+\mathrm{P}_{A} D^{T}\left(\mu_{k-1}-\mu_{k}\right). \\
\end{aligned}
\end{equation}
Denote
\begin{equation}
\label{de:J}
\begin{cases}
J:=P_A\nabla f(x^*)P_A,\\
\Delta_k:=P_A(x_k-x^*),\\
S_{k}:= -P_{A}[F(x_k,\xi_k)-f(x_k)],\\
\zeta_{k}:=- P_{A}[f\left(x_{k}\right)- f\left(x^*\right)-\nabla f\left(x^*\right)\left(x_{k}-x^*\right)],\\
\epsilon_{k}:=\frac{1}{\alpha_{k}} [\mathrm{P}_{A} D^{T}\left(\mu_{k-1}-\mu_{k}\right)-\alpha_{k} \mathrm{P}_{A} \nabla f\left(x^*\right)\left(\mathbf{I}_n-\mathrm{P}_{A}\right)\left(x_{k}-x^*\right)].
\end{cases}
\end{equation}
We may reformulate the recursion (\ref{x-x}) as	
	\begin{equation}
	\label{Delta_k1}
	\Delta_{k+1}=\left(\mathbf{I}_n-\alpha_{k}J\right) \Delta_{k}+\alpha_{k}( \zeta_{k}+S_k+\epsilon_{k}).
	\end{equation}
	Let $
	D_{k}=-\zeta_{k} \frac{\Delta_{k}^{T}}{\left\|\Delta_{k}\right\|^{2}}$,
(\ref{Delta_k1}) can be rewritten as
	\begin{equation}
	\label{Delta_k2}
	\Delta_{k+1}=\left[\mathbf{I}_n-\alpha_{k}\left(J+D_{k}\right)\right] \Delta_{k}+\alpha_{k}\left(S_{k}+\epsilon_{k}\right).
	\end{equation}
	Dividing $\alpha_{k+1}^{\delta}$ on both sides of equation (\ref{Delta_k2}),
	\begin{equation}
	\begin{aligned}
	\label{2.13}
	\frac{\Delta_{k+1}}{\alpha_{k+1}^{\delta}} &=\left(\frac{\alpha_{k}}{\alpha_{k+1}}\right)^{\delta}\left[\mathbf{I}_n-\alpha_{k}\left(J+D_{k}\right)\right] \frac{\Delta_{k}}{\alpha_{k}^{\delta}}+\alpha_{k}\left(\frac{S_{k}}{\alpha_{k+1}^{\delta}}+\frac{\epsilon_{k}}{\alpha_{k+1}^{\delta}}\right) \\
	&=\left[\mathbf{I}_n-\alpha_{k}\left(J+C_{k}\right)\right] \frac{\Delta_{k}}{\alpha_{k}^{\delta}}+\alpha_{k}\left(\frac{S_{k}}{\alpha_{k+1}^{\delta}}+\frac{\epsilon_{k}}{\alpha_{k+1}^{\delta}}\right),
%	 \\
%	&=\left[\mathbf{I}_n_{d}-\alpha_{k} J_{k}\right] \frac{\Delta_{k}}{\alpha_{k}^{\delta}}+\alpha_{k}\left(\frac{S_{k}}{\alpha_{k+1}^{\delta}}+\frac{\epsilon_{k}}{\alpha_{k+1}^{\delta}}\right),
	\end{aligned}
	\end{equation}
	where
	\begin{equation}
	\label{C_k}
	C_{k}=\frac{1}{\alpha_{k}}\left(1-\left(\frac{\alpha_{k}}{\alpha_{k+1}}\right)^{\delta}\right) \mathbf{I}_n+\left(\left(\frac{\alpha_{k}}{\alpha_{k+1}}\right)^{\delta}-1\right) J+\left(\frac{\alpha_{k}}{\alpha_{k+1}}\right)^{\delta} D_{k}.
	\end{equation}
By the definitions of $\Delta_{k}, J$  in (\ref{de:J}) and  the fact $D_{k}=-\zeta_{k} \frac{\Delta_{k}^{T}}{\left\|\Delta_{k}\right\|^{2}}$
	$$
	\Delta_{k}=P_{A} \Delta_{k},\quad J=P_{A} J, \quad D_{k}=P_{A} D_{k},
	$$	
which induce
	$$
	\begin{aligned}
%	J_{k} \frac{\Delta_{k}}{\alpha_{k}^{\delta}} &=
	\left(J+C_{k}\right) \frac{\Delta_{k}}{\alpha_{k}^{\delta}}
	&=\frac{1}{\alpha_{k}}\left(1-\left(\frac{\alpha_{k}}{\alpha_{k+1}}\right)^{\delta}\right) \frac{\Delta_{k}}{\alpha_{k}^{\delta}}+\left(\frac{\alpha_{k}}{\alpha_{k+1}}\right)^{\delta} J \frac{\Delta_{k}}{\alpha_{k}^{\delta}}+\left(\frac{\alpha_{k}}{\alpha_{k+1}}\right)^{\delta} D_{k} \frac{\Delta_{k}}{\alpha_{k}^{\delta}} \\
%	&=\frac{1}{\alpha_{k}}\left(1-\left(\frac{\alpha_{k}}{\alpha_{k+1}}\right)^{\delta}\right) \frac{P_{A} \Delta_{k}}{\alpha_{k}^{\delta}}+\left(\frac{\alpha_{k}}{\alpha_{k+1}}\right)^{\delta} P_{A} J \frac{\Delta_{k}}{\alpha_{k}^{\delta}}+\left(\frac{\alpha_{k}}{\alpha_{k+1}}\right)^{\delta} P_{A} D_{k} \frac{\Delta_{k}}{\alpha_{k}^{\delta}} \\
	&=P_{A}\left(\frac{1}{\alpha_{k}}\left(1-\left(\frac{\alpha_{k}}{\alpha_{k+1}}\right)^{\delta}\right) \mathbf{I}_n+\left(\frac{\alpha_{k}}{\alpha_{k+1}}\right)^{\delta} J+\left(\frac{\alpha_{k}}{\alpha_{k+1}}\right)^{\delta} D_{k}\right) \frac{\Delta_{k}}{\alpha_{k}^{\delta}}\\
	&=P_{A} (J+C_{k} )\frac{\Delta_{k}}{\alpha_{k}^{\delta}}.
	\end{aligned}
	$$
	Then (\ref{2.13}) can be rewritten as
\begin{equation}
	\label{39}
	\frac{\Delta_{k+1}}{\alpha_{k+1}^{\delta}}=\left[\mathbf{I}_n-\alpha_{k} P_{A} (J+C_{k})\right] \frac{\Delta_{k}}{\alpha_{k}^{\delta}}+\alpha_{k}\left(\frac{S_{k}}{\alpha_{k+1}^{\delta}}+\frac{\epsilon_{k}}{\alpha_{k+1}^{\delta}}\right).
	\end{equation}
{Let $\Lambda$ be the orthogonal matrix with the set of eigenvectors associated with projection matrix $P_A$, and $\left(\begin{array}{cc}
		\mathbf{I}_r & \mathbf{0} \\
		\mathbf{0} & \mathbf{0}
	\end{array}\right)$
	being the associated diagonal matrix of eigenvalues,}
%	{Let $\Lambda$ be the orthogonal matrix to eigendecomposition of projection matrix $P_A$ with diagonal matrix $\left(\begin{array}{cc}
%	\mathbf{I}_r & \mathbf{0} \\
%	\mathbf{0} & \mathbf{0}
%	\end{array}\right)$,}
 $\left(\Lambda^{T}\right)^{(r)}$ be a $r \times n$-matrix composed of first $r$ row vectors of $\Lambda^{T}$ and $G_{k}$ be the $r$-order leading {principle submatrix}
  of $\Lambda^{T} (J+C_{k}) \Lambda$. Denote
	\begin{equation}
	\label{41}
	\Delta_{k}^{\prime}=\left(\Lambda^{T}\right)^{(r)} \Delta_{k}, \quad  S_{k}^{\prime}=\left(\Lambda^{T}\right)^{(r)} S_{k}, \quad \epsilon_{k}^{\prime}=\left(\Lambda^{T}\right)^{(r)} \epsilon_{k}.
	\end{equation}
	Then by \cite[Lemma 4]{S_Zhao}, (\ref{39}) can be rewritten as
%	
%	Notice that the identity of  subspace for SDA by \cite[Theorem 3]{Duchi19},  by \cite[Lemma 4]{S_Zhao} there exists an invertible matrix $\Lambda$, such that
\begin{equation}
\label{40}
	\left(\begin{array}{c}
	\frac{\Delta_{k+1}^{\prime}}{\alpha_{k+1}^{\delta}} \\
	\mathbf{0}
	\end{array}\right)=\left(\begin{array}{c}
	\frac{\Delta_{k}^{\prime}}{\alpha_{k}^{\delta}} \\
	\mathbf{0}
	\end{array}\right)-\alpha_{k}\left(\begin{array}{c}
	G_{k} \frac{\Delta_{k}^{\prime}}{\alpha_{k}^{\delta}} \\
	\mathbf{0}
	\end{array}\right)+\alpha_{k}\left[\left(\begin{array}{c}
	S_{k}^{\prime} \\
	\hline\alpha_{k+1}^{\delta} \\
	\mathbf{0}
	\end{array}\right)+\left(\begin{array}{c}
	\epsilon_{k}^{\prime} \\
	\hline \alpha_{k+1}^{\delta} \\
	\mathbf{0}
	\end{array}\right)\right].
\end{equation}
%	where $\Lambda$ is the orthogonal matrix to eigendecomposition of projection matrix $P_A$ with diagonal matrix $\left[\begin{array}{ll}
%		\mathbf{I}_r & 0 \\
%		0 & 0
%	\end{array}\right]$,
%	\begin{equation}
%	\label{41}
%	\Delta_{k}^{\prime}=\left(\Lambda^{T}\right)^{(r)} \Delta_{k}, \quad  S_{k}^{\prime}=\left(\Lambda^{T}\right)^{(r)} S_{k}, \quad \epsilon_{k}^{\prime}=\left(\Lambda^{T}\right)^{(r)} \epsilon_{k},
%	\end{equation}
%	$\left(\Lambda^{T}\right)^{(r)}$ is a $r \times n$ -matrix composed of first $r$ row vectors of $\Lambda^{T}$ and $G_{k}$ is the $r$ -order leading {\color{blue}principle submatrix} of $\Lambda^{T} (J+C_{k}) \Lambda$.
Obviously, it is sufficient to focus on  the nonzero part of (\ref{40}),
	\begin{equation}
	\label{42}
	\frac{\Delta_{k+1}^{\prime}}{\alpha_{k+1}^{\delta}}=\left(\mathbf{I}_{r}-\alpha_{k} G_{k}\right) \frac{\Delta_{k}^{\prime}}{\alpha_{k}^{\delta}}+\alpha_{k}\left(\frac{S_{k}^{\prime}}{\alpha_{k+1}^{\delta}}+\frac{\epsilon_{k}^{\prime}}{\alpha_{k+1}^{\delta}}\right).
	\end{equation}
	Setting
	$$
	y_{k}=\frac{\Delta_{k}^{\prime}}{\alpha_{k}^{\delta}}, \quad F_{k}=-G_{k}, \quad e_{k}=\frac{S_{k}^{\prime}}{\alpha_{k+1}^{\delta}}, \quad v_{k}=\frac{\epsilon_{k}^{\prime}}{\alpha_{k+1}^{\delta}},
	$$
	(\ref{42})
%	 can be rewritten as
%	$$
%	y_{k+1}=y_{k}+\alpha_{k} F_{k} y_{k}+\alpha_{k}\left(e_{k}+v_{k}\right),
%	$$
%which
is exact the formulation (\ref{linear reccursion}) in Lemma \ref{lem:rate}.

In what follows, we verify the conditions of Lemma \ref{lem:rate}.
	Firstly, we show that $-G_{k}$ converges to a stable matrix \footnote{All the eigenvalues of the matrix have strictly negative real part.}.
	Recall the definition (\ref{C_k}),  the first two terms in $C_k$
		$$
		\left(\frac{\alpha_{k}}{\alpha_{k+1}}\right)^{\delta} \rightarrow 1, \quad \frac{1}{\alpha_{k}}\left(1-\left(\frac{\alpha_{k}}{\alpha_{k+1}}\right)^{\delta}\right)=\frac{k^{\beta}}{\alpha_{0}}\left(1-\left(1+\frac{1}{k}\right)^{\beta \delta}\right) \rightarrow 0,
		$$
	as  {$\alpha_{k} =\alpha_{0} k^{-\beta}, \beta \in(2 / 3,1).
	$}
	Moreover, for large enough $k$, the third term of $C_k$ satisfies
		$$
		\left\|D_{k}\right\|  \leq \frac{C\left\|P_{A}\right\|\left\|{x}_{k}-x^{*}\right\|^{2}}{\left\|{x}_{k}-x^{*}\right\|}=C\left\|P_{A}\right\|\left\|{x}_{k}-x^{*}\right\|,
		$$
		where the   inequality follows from  the definition of $\zeta_{k}$ and  (\ref{qu}).  Then $C_{k} \rightarrow 0$ almost surely
	as ${x}_{k} \rightarrow x^*$ almost surely \cite[Theorem 2]{Duchi19}.
	Combining the fact that $G_{k}$ is the $r$-order leading {principle submatrix} of $\Lambda^{T} (J+C_{k}) \Lambda$,  $G_{k}$ converges to  the $r$-order leading {principle submatrix} of $\Lambda^{T} J \Lambda$, which is a positive definite matrix by \cite[Lemma 4]{S_Zhao}. Then, the limit of $\left\{-G_{k}\right\}$ is stable.

Next, we show $
\frac{\epsilon_{k}^{\prime}}{\alpha_{k+1}^{\delta}} \rightarrow 0
$ almost surely.
%, which is sufficient to show
%	$$
%	\frac{\epsilon_{k}^{\prime}}{\alpha_{k+1}^{\delta}} \rightarrow 0.
%	$$
	Recall the definition of $\epsilon_{k}$,
	$$
	\epsilon_{k}=\frac{1}{\alpha_{k}}[ P_{A} D^{T}\left(\mu_{k-1}-\mu_{k}\right)- {\alpha_{k}}P_{A} \nabla f\left(x^*\right)\left(P_{A}-\mathbf{I}_n\right)\left({x}_{k}-x^*\right)].
	$$
	By \cite[Theorem 3]{Duchi19}, $\epsilon_{k}=0$  when $k$ is large enough as $\mu_{k}=\mu_{k+1}=0$ and $\left(P_{A}-\mathbf{I}_n\right)\left({x}_{k}-x^*\right)=0 .$ Then $\frac{\epsilon_{k}^{\prime}}{\alpha_{k+1}^{\delta}}=\frac{\left(\Lambda^{T}\right)^{(r)} \epsilon_{k}}{\alpha_{k+1}^{\delta}}\to0$ almost surely.
	
	We verify
	$$
	\sum_{k=1}^{\infty} \frac{\alpha_{k}S_{k}^{\prime} }{\alpha_{k+1}^{\delta}}<\infty \quad \text { a.s. }
	$$
	Denote
	$$
	e_{k}^{\prime}=\left(\frac{\alpha_{k}}{\alpha_{k+1}}\right)^{\delta}\left(\Lambda^{T}\right)^{(r)} S_{k}.
	$$
	{Define the filtration \begin{equation}
	\label{eq:filtration}
	\mathcal{F}_{k}:=\sigma(S_1,\cdots,S_{k-1}),
	\end{equation}}where {$\sigma(S_1,\cdots,S_{k-1})$ is the $\sigma$-algebra generated by $\{S_1,\cdots,S_{k-1}\}$.}
	Obviously, $\left\{e_{k}^{\prime}, \mathcal{F}_{k+1}\right\}$ is a martingale difference sequence as $\left\{S_{k}, \mathcal{F}_{k+1}\right\}$ is. Then,
\begin{equation}
\label{e_k'}
	\begin{aligned}
	\sup _{k} \mathbb{E}\left[\left\|e_{k}^{\prime}\right\|^{2} \mid \mathcal{F}_{k}\right] &=\sup _{k} \mathbb{E}\left[\left\|\left(\frac{\alpha_{k}}{\alpha_{k+1}}\right)^{\delta}\left(\Lambda^{T}\right)^{(r)} S_{k}\right\|^{2} \mid \mathcal{F}_{k}\right] \\
	& \leq \sup _{k}\left(\frac{\alpha_{k}}{\alpha_{k+1}}\right)^{\delta}\left\|\left(\Lambda^{T}\right)^{(r)}\right\|^{2} \mathbb{E}\left[\left\|S_{k}\right\|^{2} \mid \mathcal{F}_{k}\right] \\
	& \leq 4^{\delta}\left\|\left(\Lambda^{T}\right)^{(r)}\right\|^{2} \sup _{k} \mathbb{E}\left[\left\|S_{k}\right\|^{2} \mid \mathcal{F}_{k}\right], \\
%	&=4^{\delta}\left\|\left(\Lambda^{T}\right)^{(r)}\right\|^{2} \sup _{k} \mathbb{E}\left[\left\| P_{A} S_{k}\right\|^{2} \mid \mathcal{F}_{k}\right] \\
%	& \leq 4^{\delta}\left\|\left(\Lambda^{T}\right)^{(r)}\right\|^{2}\left\|P_{A}\right\|^{2} \sup _{k} \mathbb{E}\left[\left\|S_{k}\right\|^{2} \mid \mathcal{F}_{k}\right]
	% \\
	%& \leq 4^{\delta}\left\|\left(\Lambda^{T}\right)^{(r)}\right\|^{2}\left\|P_{A}\right\|^{2} 4 \underline{L_{0}^{2}}
	\end{aligned}
\end{equation}
	where the second inequality follows from
	$$
	\left(\frac{\alpha_{k}}{\alpha_{k+1}}\right)^{\delta}=\left(1+\frac{1}{k}\right)^{\beta \delta} \leq 2^{\beta \delta}.
	$$
%	and the third inequality follows from the convexity of $\|\cdot\|^{2}$.
%	By the definition of $S_k$,
%	$$
%	\begin{aligned}
%	S_k=F(x_k,\xi_k)-f(x_k)=S_{k,1}+S_{k,2},
%	\end{aligned}
%	$$
{Define $S_{k,1}:=P_A[F(x_k,\xi_k)-F(x^*,\xi_k)+f(x^*)-f(x_k)]$ and $S_{k,2}:=P_A[F(x^*,\xi_k)-f(x^*)]$.
Obviously, $$
\mathbb{E}\left[\left\|S_{k,2}\right\|^{2} \mid \mathcal{F}_{k}\right]
=
\|\Sigma\|.
$$
Moreover,  Assumption \ref{assu_1} (i) implies
$$
\mathbb{E}\left[\left\|S_{k,1}\right\|^{2} \mid \mathcal{F}_{k}\right]
\leq
4L^2\|\Delta_k\|^2,
$$
and Assumption \ref{ass-batch} implies
\begin{equation}
\label{eq:S_k,1}
\mathbb{E}\left[\left\|S_{k}\right\|^{2} \mid \mathcal{F}_{k}\right]
=
\mathbb{E}\left[\left\|S_{k,1}+S_{k,2}\right\|^{2} \mid \mathcal{F}_{k}\right]
\leq
\|\Sigma\|+4L^2\|\Delta_k\|^2+4L\|\Sigma\|^{\frac{1}{2}}\|\Delta_k\|
< \infty.
\end{equation}}
 Then, (\ref{e_k'}) is finite.
 Since
	$$
	\sum_{k=1}^{\infty} \alpha_{k}^{2(1-\delta)}=\sum_{k=1}^{\infty} \frac{\alpha_{0}^{2(1-\delta)}}{k^{2(1-\delta) \beta}}<\infty,
	$$
the convergence theorem {of} martingale difference sequences \cite[Appendix {B}.6, Theorem B 6.1]{Chen06} ensures that
	$$
	\sum_{k=1}^{\infty} \alpha_{k}^{1-\delta} e_{k}^{\prime}<\infty,
	$$
	which implies
	$$
	\sum_{k=1}^{\infty}  \frac{\alpha_{k}S_{k}^{\prime}}{\alpha_{k+1}^{\delta}}=\sum_{k=1}^{\infty} \alpha_{k}^{1-\delta} e_{k}^{\prime}<\infty.
	$$
Subsequently, Lemma \ref{lem:rate} implies $\frac{\Delta_{k}^{\prime}}{\alpha_{k}^{\delta}} \rightarrow 0$ almost surely. By the definition of $\Delta_{k}^{\prime}$ in (\ref{41}),  $\left\|x_k-x^*\right\|=o\left(\alpha_{k}^{\delta}\right)$ almost surely. The proof is complete.
\end{proof}
We are ready to study the asymptotic normality  of the    last iterate  of SDA for SVIP (\ref{svi}).
%The following theorem  proposes the asymptotic normality  of   the average of the iterates of SDA for SVIP (\ref{svi}). This also provides a basis for constructing the confidence region in the case of non-ergodic iteration.
\begin{thm}
	\label{thm:non-ave}
	Suppose that (i) Assumptions \ref{assu_1} and \ref{ass-batch} hold, (ii) step-size
	{$\alpha_{k} =\alpha_{0} k^{-\beta} \text { with }
	\beta  \in\left(\frac{2}{3}, 1\right)$} and $ \alpha_{0}>0$,  (iii)
%	the covariance matrix mapping $\operatorname{Cov}\left(F\left(\cdot , \xi\right)\right)$ is continuous at point $x^*$ and
$\Lambda$ is the orthogonal matrix with the set of eigenvectors associated with projection matrix $P_A$, and $\left(\begin{array}{cc}
	\mathbf{I}_r & \mathbf{0} \\
	\mathbf{0} & \mathbf{0}
\end{array}\right)$
is the associated diagonal matrix of eigenvalues. Then
\begin{equation}
\label{normal}
	\frac{x_{k}-x^*}{\sqrt{\alpha_{k}}} \stackrel{d}{\longrightarrow} \mathcal{N}(0, \tilde{\Sigma}),
\end{equation}
	where
\begin{equation}
\label{Sigma1}
	\begin{array}{c}
	\tilde{\Sigma}=\Lambda\left(\begin{array}{cc}
	\Sigma_{1} & \mathbf{0}\\
	\mathbf{0} & \mathbf{0}
	\end{array}\right) \Lambda^{T}, \quad
%	\quad 	\Lambda^{T} P_{A} \Lambda=\left(\begin{array}{cc}
%	I_{r} & \mathbf{0}_{1} \\
%	\mathbf{0}_{2} & \mathbf{0}_{3}
%	\end{array}\right),\\
	\Sigma_{1}=\int_{0}^{\infty} e^{(-G) t}\left(\Lambda^{T}\right)^{(r)} P_{A} {\Sigma} P_{A}\left( \left(\Lambda^{T}\right)^{(r)}\right) ^T e^{\left(-G^{T}\right) t} \mathrm{~d} t,\\
    \end{array}
\end{equation}
 	 $\left(\Lambda^{T}\right)^{(r)} \in \R^{r \times n}$ is composed by first $r$ row vectors of $\Lambda^{T}, G$ is the r-order leading {principle submatrix} of $\Lambda^{T} J \Lambda$ and $J=P_A \nabla f(x^*) P_A$.
\end{thm}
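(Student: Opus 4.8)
The plan is to combine the reformulated recursion from the proof of Theorem~\ref{xn-x*} with the almost sure rate (\ref{rate}), reduce the problem to a central limit theorem for a \emph{linear} stochastic approximation recursion in the reduced coordinates $\Delta_k'=(\Lambda^{T})^{(r)}(x_k-x^*)$, and finally transport the resulting Gaussian limit back through the orthogonal matrix $\Lambda$.

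\emph{Step 1: reduction to a linear recursion.} As in the proof of Theorem~\ref{xn-x*}, \cite[Theorem 3]{Duchi19} provides an almost surely finite random index $k_0$ such that for all $k\ge k_0$ the active set is identified, i.e.\ $\mu_{k-1}=\mu_k=0$ (so $\epsilon_k=0$) and $(P_A-\mathbf{I}_n)(x_k-x^*)=0$, whence $x_k-x^*=\Delta_k=\Lambda\left(\begin{smallmatrix}\Delta_k'\\ \mathbf{0}\end{smallmatrix}\right)$. Repeating the manipulations that produced (\ref{42}), but without the $\alpha_{k+1}^{\delta}$-rescaling, shows that for $k\ge k_0$ the iteration collapses to the purely linear recursion $\Delta_{k+1}'=(\mathbf{I}_r-\alpha_k\widetilde{G}_k)\Delta_k'+\alpha_k S_k'$, where $S_k'=(\Lambda^{T})^{(r)}S_k$ with $S_k=-P_A[F(x_k,\xi_k)-f(x_k)]$, and $\widetilde{G}_k$ is the $r$-order leading principal submatrix of $\Lambda^{T}(J+D_k)\Lambda$. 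Since the weak limit of $(x_k-x^*)/\sqrt{\alpha_k}$ does not see the finitely many terms before $k_0$, it suffices to analyse this recursion.

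\emph{Step 2: verifying the hypotheses of a linear-SA CLT.} Three facts are needed. (a) \emph{Stable, convergent drift:} by (\ref{qu}) and Theorem~\ref{xn-x*}, for $k$ large one has $\|D_k\|\le C\|P_A\|\,\|x_k-x^*\|\to 0$ a.s., hence $\widetilde{G}_k\to G$ a.s., where $G$ is the $r$-order leading principal submatrix of $\Lambda^{T}J\Lambda$, positive definite by \cite[Lemma 4]{S_Zhao}, so $-G$ is stable. (b) \emph{Dominant noise plus negligible remainder:} write $S_k=-S_{k,1}-S_{k,2}$ with $S_{k,1},S_{k,2}$ as in (\ref{eq:S_k,1}); then $(\Lambda^{T})^{(r)}S_{k,2}$ is i.i.d.\ with mean zero and covariance $V:=(\Lambda^{T})^{(r)}P_A\Sigma P_A\left((\Lambda^{T})^{(r)}\right)^{T}$, which by Assumption~\ref{assu_1}(iv) also satisfies the Lindeberg condition, whereas $(\Lambda^{T})^{(r)}S_{k,1}$ is a martingale difference with $\mathbb{E}[\|S_{k,1}\|^2\mid\mathcal{F}_k]\le 4L^2\|\Delta_k\|^2=o(\alpha_k^{2\delta})$ a.s.\ by Assumption~\ref{assu_1}(i) and (\ref{rate}), hence asymptotically negligible. (c) \emph{Negligible linearization bias:} the difference $\widetilde{G}_k\Delta_k'-G\Delta_k'$ carries the linearization error $-\zeta_k$, and $\|\zeta_k\|\le C\|P_A\|\,\|x_k-x^*\|^2=o(\alpha_k^{2\delta})$ a.s.; since $\beta\in(2/3,1)$ we may fix $\delta\in(1/4,\,1-\frac{1}{2\beta})$, so that $2\delta>1/2$ and the accumulated bias is $o(\sqrt{\alpha_k})$. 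Granting (a)--(c), a central limit theorem for linear stochastic approximation recursions (Fabian \cite{fabian1968asymptotic}; see also \cite{Chen06,polyak1992acceleration}) yields $\Delta_k'/\sqrt{\alpha_k}\stackrel{d}{\to}\mathcal{N}(0,\Sigma_1)$, where $\Sigma_1$ is the unique solution of the Lyapunov equation $G\Sigma_1+\Sigma_1 G^{T}=V$, equivalently $\Sigma_1=\int_{0}^{\infty}e^{-Gt}V\,e^{-G^{T}t}\,\mathrm{d}t$, which is precisely (\ref{Sigma1}).

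\emph{Step 3, and the main obstacle.} For $k\ge k_0$, $(x_k-x^*)/\sqrt{\alpha_k}=\Lambda\left(\begin{smallmatrix}\Delta_k'/\sqrt{\alpha_k}\\ \mathbf{0}\end{smallmatrix}\right)$, so the continuous mapping theorem gives $(x_k-x^*)/\sqrt{\alpha_k}\stackrel{d}{\to}\Lambda\left(\begin{smallmatrix}\mathcal{N}(0,\Sigma_1)\\ \mathbf{0}\end{smallmatrix}\right)=\mathcal{N}\left(0,\ \Lambda\left(\begin{smallmatrix}\Sigma_1&\mathbf{0}\\ \mathbf{0}&\mathbf{0}\end{smallmatrix}\right)\Lambda^{T}\right)=\mathcal{N}(0,\tilde{\Sigma})$, which is (\ref{normal}). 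The crux is parts (b)--(c) of Step 2: one must show that the state-dependent noise $S_{k,1}$ and the perturbations $\widetilde{G}_k-G$ and $\zeta_k$ are all negligible at the $\sqrt{\alpha_k}$ scale, and this is exactly where the almost sure rate $\|x_k-x^*\|=o(\alpha_k^{\delta})$ of Theorem~\ref{xn-x*} and the restriction $\beta\in(2/3,1)$ — which is what makes the admissible window $\delta\in(1/4,\,1-\frac{1}{2\beta})$ nonempty — are indispensable. The remaining effort, putting the reduced recursion into the exact hypotheses of the cited linear-SA CLT, is routine bookkeeping.
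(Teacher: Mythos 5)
Your proposal is correct and follows essentially the same route as the paper: reduce to the $r$-dimensional recursion in $\Delta_k'=(\Lambda^T)^{(r)}(x_k-x^*)$ using the active-set identification of \cite[Theorem 3]{Duchi19}, invoke a CLT for linear stochastic approximation (the paper uses \cite[Theorem 3.3.1]{Chen06}, stated as Lemma \ref{lem:asym norm}) with the iid noise $P_A[F(x^*,\xi_k)-f(x^*)]$ as the dominant martingale part, control the remainders $\zeta_k$, $S_{k,1}$ via the rate $o(\alpha_k^\delta)$ of Theorem \ref{xn-x*} with $\delta\in(1/4,1-\frac{1}{2\beta})$, and transport back through $\Lambda$. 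The only cosmetic difference is that you absorb the linearization error into a time-varying drift $\widetilde{G}_k$ while the paper keeps the fixed drift $-G$ and places $\zeta_k'$ in the additive perturbation $\upsilon_k$; both lead to the same verification.
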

\begin{proof}
	{We mimic the proof of \cite[Theorem 3]{S_Zhao} to study (\ref{normal}).}
	 We employ  Lemma \ref{lem:asym norm} in Appendix to prove (\ref{normal}). We first reformulate $\Delta_{k}$  into the form of formula (\ref{linear reccursion_1}) in Lemma \ref{lem:asym norm}.
%	 By definition of $J$,
%	 	\begin{equation*}
%	 	J=P_A\nabla f(x^*)P_A=P_AJ.
%	 	\end{equation*}
	 	
%	 	Substitute (\ref{T}) into (\ref{Delta_k1}),
%	 	\begin{equation}\label{Delta_k1}
%	 \begin{aligned}
%	 	\Delta_{k+1}
%	 	&=\left[I-\alpha_{k}P_AJ\right]\Delta_k+\alpha_k\left(S_k-\zeta_k+\epsilon_k\right).
%	 \end{aligned}
%	 	\end{equation}
	 	Left  multiplying $\Lambda^T$ on both side of  (\ref{Delta_k1}), we have by
	 	\cite[Lemma 4]{S_Zhao} that
	 	\begin{equation}
	 	\label{2.19}
	 	\left(
	 	\begin{array}{cc}
	 	\Delta_{k+1}^{'}\\
	 	\textbf{0}\\
	 	\end{array}\right)=
	 	\left(
	 	\begin{array}{cc}
	 	\Delta_{k}^{'}\\
	 	\textbf{0}\\
	 	\end{array}\right)
	 	-\alpha_k\left(
	 	\begin{array}{cc}
	 	G\Delta_{k}^{'}\\
	 	\textbf{0}\\
	 	\end{array}\right)+\alpha_{k}\left[
	 	\left(
	 	\begin{array}{cc}
	 	\zeta_k^{'}\\
	 	\textbf{0}\\
	 	\end{array}\right)+
	 	\left(
	 	\begin{array}{cc}
	 	S_k^{'}\\
	 	\textbf{0}\\
	 	\end{array}\right)+
	 	\left(
	 	\begin{array}{cc}
	 	\epsilon_k^{'}\\
	 	\textbf{0}\\
	 	\end{array}\right)
	 	\right],
	 	\end{equation}
	 	where $G$ is the $r$-order leading {principle submatrix} of $\Lambda^{T} J \Lambda$,
	 	\begin{equation*}
	 	\label{S_k'}
	 	\Delta_{k}^{'}=(\Lambda^T)^{(r)}\Delta_{k},\quad\zeta_k^{'}=(\Lambda^T)^{(r)}\zeta_k,\quad S_k^{'}=(\Lambda^T)^{(r)}S_k,\quad\epsilon_k^{'}=(\Lambda^T)^{(r)}\epsilon_k.
	 	\end{equation*} 	
	 	Obviously, it is sufficient to focus on  the nonzero part of (\ref{2.19}),
	 	\begin{equation}\label{recursion 5}
	 	\Delta_{k+1}^{'} =(\mathbf{I}_r-\alpha_{k}G)\Delta_{k}^{'}+\alpha_{k}\left(\zeta_k^{'}+S_k^{'}+\epsilon_k^{'}\right).
	 	\end{equation}
Setting
	 	\begin{equation*}
	 	y_k=\Delta_{k}^{'},\quad F_k=-G, \quad e_k=S_k^{'},\quad \upsilon_k=\zeta_k^{'}+\epsilon_k^{'},
	 	\end{equation*}
	 	(\ref{recursion 5})
%	 	can be rewritten as
%	 	\begin{equation*}
%	 	y_{k+1}=y_k+\alpha_{k}F_ky_k+\alpha_{k}\left(e_k+\tau_k\right),
%	 	\end{equation*}
%	 which
	 is exact the formulation (\ref{linear reccursion_1}) in Lemma \ref{lem:asym norm}.
	 	
	 	Next, we verify the conditions of Lemma \ref{lem:asym norm}. By the setting of step-size $\alpha_{k}$,
$
	 	\alpha_{k+1}^{-1}-\alpha_{k}^{-1}\rightarrow 0,
$
	 	which implies condition (i) of Lemma \ref{lem:asym norm}.
	 	By the definition of $G$, $-G$  is stable, condition (ii) of Lemma \ref{lem:asym norm} holds.
	 	In what follows, we verify condition (iii) of    Lemma \ref{lem:asym norm}. We first show that $\epsilon_k^{'}+\zeta_k^{'}=o(\sqrt{\alpha_k})$ almost surely.
%	 	In fact, for $\epsilon_k^{'}$, recall the definition of $\epsilon_k$ in (\ref{T}).
	
	By  \cite[Theorem 3]{Duchi19}, $\epsilon_k=0$ almost surely for $k$ large enough and then $\epsilon_k^{'}=(\Lambda^T)^{(r)}\epsilon_k=0$ almost surely for $k$ large enough.
	 	By the definition of $\zeta_k^{'}$,
	 {	\begin{equation*}
	 	\begin{aligned}
	 	\|\zeta_k^{'}\|&=\left\|-(\Lambda^T)^{(r)}P_A\left[  f(x_k)-  f(x^*)-\nabla f(x^*)(x_k-x^*)\right]\right\|\\
	 	&\le {C}\left\|(\Lambda^T)^{(r)}P_A\right\|\left\|x_k-x^*\right\|^2
	 	=o\left(\alpha_k^{2\delta}\right)\quad \text{a.s.},
	 	\end{aligned}
	 	\end{equation*}}where the  inequality follows from Assumption \ref{assu_1} (i) and the last equality follows from Theorem \ref{xn-x*}. Therefore,
	 	\begin{equation*}
	 	\label{e+zeta}
	 	\epsilon_k^{'}+\zeta_k^{'}=o\left(\alpha_k^{2\delta}\right)\le o(\sqrt{\alpha_k})\quad \text{a.s.,}
	 	\end{equation*}
	 	{as we may choose} $\delta\in (1/4,1-{1}/{(2\beta)})$.
	 	By mimicking the proof of \cite[(57)-(61)]{S_Zhao},
 the conditions (\ref{c1}-\ref{c3}) in Lemma \ref{lem:asym norm} hold.
	 	
	 	Summarizing above, all the conditions of Lemma \ref{lem:asym norm} hold. Then, %\cite[Theorem 3.3.1]{Chen06}
	 	\begin{equation*}
	 	\dfrac{\Delta_{k}^{'}}{\sqrt{\alpha_{k}}}\xrightarrow{d}\mathcal{N}(0,\Sigma_1),
	 	\end{equation*}
	 	where $\Sigma_1$ is defined in (\ref{Sigma1}).
	 	%	\begin{equation}
	 	%	\Sigma_1=\int_{0}^{\infty}e^{(-G)t}(U^T)^{(r)}P_A\bar{\Sigma}P_A(U^T)^{(r)T}e^{(-G^T)t}\d t,\quad\bar{\Sigma}=\dfrac{1}{m^2}\sum_{j=1}^m\operatorname{Cov}(  F_j(x^*,\xi_j)),
	 	%	\end{equation}
%	 	and $(U^T)^{(r)}\in\R^{r\times n}$ is composed by first $r$ row vectors of $U^T$.
	 	Note that $\Delta_{k}=\Lambda\left((\Delta_{k}^{'})^T,\textbf{0}^T\right)^T$ and by the definition of $\tilde{\Sigma}$   in (\ref{Sigma1}),
	 	$$
	 	\dfrac{\Delta_{k}}{\sqrt{\alpha_{k}}}\xrightarrow{d}\mathcal{N}(0,\tilde{\Sigma}),
	 	$$
	 	{which implies (\ref{normal}).}
	\end{proof}
Theorem \ref{thm:non-ave} presents the asymptotic normality of the    last iterate  of SDA for SVIP (\ref{svi}) with the rate $1/\sqrt{\alpha_{k}}$. Note that step-size
{$\alpha_{k} =\alpha_{0} k^{-\beta} \text { and }
\beta  \in\left(\frac{2}{3}, 1\right)$},
the convergence rate of the asymptotic normality  of the    last iterate can not arrive at $\sqrt{k}$.
 {Similarly},   Theorem \ref{thm:non-ave} ensure us to {construct the  confidence regions of the true solution to SVIP (\ref{svi}) by the last iterate  of SDA.}

\section{Estimator for the covariance matrix}
Inference is a core topic in statistics and the confidence region has been widely used to quantify the uncertainty in the estimation of model parameters. The asymptotic normality {of SDA} is the first step of building the confidence regions of the true solutions {for} SVIP (\ref{svi}). {Next}, we have to provide  {estimators} of the asymptotic covariance {matrices} in the limit
normal distributions. In the seminal work  \cite{Chen16}, Chen et al. propose two online methods plug-in and batch-means to estimate the covariance  matrix  when  vanilla SGD  is implemented to solve {unconstrained} stochastic optimization problems.
We extend the plug-in and batch-means methods to {SDA} algorithm for SVIP (\ref{svi}).

\subsection{Plug-in method}
Recall the normal distribution in Theorem \ref{thm:SA-c},
$$
 		\frac{1}{\sqrt{k}} \sum_{i=1}^{k}\left(x_{i}-x^*\right) \stackrel{d}{\to} \mathcal{N}\left(0, \mathrm{P}_{A}H^{\dagger} \mathrm{P}_{A} \Sigma \mathrm{P}_{A}H^{\dagger} \mathrm{P}_{A}\right).
$$
The idea of the plug-in  method \cite{Chen16} is to separately estimate $\Sigma$, $\mathrm{P}_{A}$ and $H^{\dagger}$ by some $\Sigma_k$, $\mathrm{P}_{A_k}$ and $H^{\dagger}_k$. However,  as the   Moore-Penrose inverse of matrix is not continuous, it is difficult to show the convergence of $H^{\dagger}_k$ to $H^{\dagger}$. This motivates us to reformulate the above normal {distribution through linear transformation first}.

Let $\Lambda$ be the orthogonal matrix with the set of eigenvectors associated with projection matrix $P_A$, and $\left(\begin{array}{cc}
	\mathbf{I}_r & \mathbf{0} \\
	\mathbf{0} & \mathbf{0}
\end{array}\right)$
being the associated diagonal matrix of eigenvalues,
 $\left(\Lambda^{T}\right)^{(r)}$ be a $r \times n$-matrix composed of first $r$ row vectors of $\Lambda^{T}$.
Left  multiplying $\Lambda^T$ {on} (\ref{eq:T2.1}), we have
\begin{equation*}
\Lambda^{T}P_A  \frac{1}{\sqrt{k}} \sum_{i=1}^{k}\left(x_{i}-x^*\right) \stackrel{d}{\to} \mathcal{N}\left(0, \Lambda^{T}\mathrm{P}_{A}H^{\dagger} \mathrm{P}_{A}\Lambda
\Lambda^{T} \mathrm{P}_{A}\Sigma\mathrm{P}_{A}\Lambda
\Lambda^{T} \mathrm{P}_{A}H^{\dagger} \mathrm{P}_{A}\Lambda\right).
\end{equation*}
By some {calculations} and the fact $(P_AHP_A)^{\dagger}=P_AH^{\dagger}P_A$
  \cite{Duchi19},
%$\Lambda$ is the orthogonal matrix,
$$
\begin{aligned}
&\Lambda^{T}\mathrm{P}_{A}H^{\dagger} \mathrm{P}_{A}\Lambda
=
(\Lambda^{T}\mathrm{P}_{A}H \mathrm{P}_{A}\Lambda)^{\dagger}
=
\left(\begin{array}{cc}
\left( \left(\Lambda^{T}\right)^{(r)}H\left( \left(\Lambda^{T}\right)^{(r)}\right) ^T\right) ^{-1}& \mathbf{0} \\
\mathbf{0} & \mathbf{0}
\end{array}\right),\\
&\Lambda^{T} \mathrm{P}_{A}\Sigma\mathrm{P}_{A}\Lambda=\left(\begin{array}{cc}
\left(\Lambda^{T}\right)^{(r)}\Sigma\left( \left(\Lambda^{T}\right)^{(r)}\right) ^T& \mathbf{0} \\
\mathbf{0} & \mathbf{0}
\end{array}\right).
\end{aligned}
$$
Note also that $x_k$   could identify the   subspace $\{x: Ax=b\}$ \cite[Theorem 3]{Duchi19},   (\ref{eq:T2.1}) can be rewritten as
$$
%		\left(
%	\begin{array}{cc}
%	{\sqrt{k}} \left(\Lambda^{T}\right)^{(r)}\left(\bar{x}-x^*\right)\\
%	\textbf{0}\\
%	\end{array}\right)
\frac{1}{\sqrt{k}} \sum_{i=1}^{k}\left(x_{i}-x^*\right)
\stackrel{d}{\to}
\mathcal{N}\left(0,
\Lambda\left(\begin{array}{cc}
\bar{H} ^{-1} \bar{\Sigma} \bar{H}^{-1}
& \mathbf{0} \\
\mathbf{0} & \mathbf{0}
\end{array}\right) \Lambda^{T}
\right)$$
{with $k\to\infty$, }
where \begin{equation*}
\label{eq:barSigma}
\bar{H}=\left(\Lambda^{T}\right)^{(r)}H\left( \left(\Lambda^{T}\right)^{(r)}\right) ^T,~\bar{\Sigma}=\left(\Lambda^{T}\right)^{(r)}\Sigma\left( \left(\Lambda^{T}\right)^{(r)}\right) ^T.
\end{equation*}
Then  the plug-in method {is to estimate} $\Lambda$, $\bar{\Sigma}$  and $\bar{H}$ separately.  Denote
  $A_k$ as the matrix with respect to active
 constraint on $x_k$,
 $$
 \mathrm{P}_{A_{k}}=\mathbf{I}_n-{A_{k}}^{T}\left({A_{k}} {A_{k}}^{T}\right)^{\dagger} {A_{k}},
 $$
 $$
\Sigma_k=\frac{1}{k}\sum_{i=1}^{k}F(x_{i-1},\xi_i)F(x_{i-1},\xi_i)^T-\left[ \frac{1}{k}\sum_{i=1}^{k}F(x_{i-1},\xi_i)\right] \left[ \frac{1}{k}\sum_{i=1}^{k}F(x_{i-1},\xi_i)\right] ^T$$
and
  $${H_k}=\frac{1}{k}\sum_{i=1}^{k}\nabla F(x_{i-1},\xi_i).$$
  Let $\Lambda_k$ be the orthogonal matrix with the set of eigenvectors associated with projection matrix $\mathrm{P}_{A_{k}}$, and $\left(\begin{array}{cc}
  	\mathbf{I}_{r_k} & \mathbf{0} \\
  	\mathbf{0} & \mathbf{0}
  \end{array}\right)$
  being the associated diagonal matrix of eigenvalues,
 $\left(\Lambda_k^{T}\right)^{({r_k})}$ be a ${r_k} \times n$-matrix composed of first ${r_k}$ row vectors of $\Lambda_k^{T}$.
 Then $\Lambda_k$,
 \begin{equation*}
 \label{eq:barH}
	\begin{aligned}
	\bar{\Sigma}_k:=\left(\Lambda_k^{T}\right)^{({r_k})}\Sigma_k\left( \left(\Lambda_k^{T}\right)^{({r_k})}\right) ^T,\quad
	\bar{H}_k:=\left(\Lambda_k^{T}\right)^{({r_k})}H_k\left( \left(\Lambda_k^{T}\right)^{({r_k})}\right) ^T
	\end{aligned}
 \end{equation*}
are the estimators of  $\Lambda, \bar \Sigma, \bar H$ respectively.

The consistency of the  plug-in estimator can be established under the following conditions.

\begin{assu}
	\label{plugass}
\begin{itemize}
	\item[(i)]	There exists measurable variable $L_2(\xi)$ such that $\mathbb{E}[L_2(\xi)]<\infty$ and
	$$\left\|\nabla F(x,\xi)-\nabla F(x^*,\xi)\right\|
	\leq {L_2}(\xi)\|x-x^*\|\quad \forall x\in\C.$$

	\item[(ii)] There exists a constant $C$ such that
 $F(x^*,\xi)\leq C$ almost surely.
\end{itemize}
\end{assu}
Assumption \ref{plugass} (i) is the calmness of $\nabla F(\cdot,\xi)$ at point $x^*$ relative to $\C$.
Assumption \ref{plugass} (ii) holds
if $F(x^*,\cdot)$ is continuous in $\xi$ and $\Xi$ is compact.

\begin{thm}
	\label{T3.1}
	Suppose that (i) Assumptions \ref{assu_1} and \ref{plugass} hold, (ii) step-size
	{$\alpha_{k} =\alpha_{0} k^{-\beta} \text { with }
	\beta  \in\left(\frac{2}{3}, 1\right)$} and $ \alpha_{0}>0$. Then
	\begin{equation*}
	\label{PSP}
	\left\| \Lambda_k\left(\begin{array}{cc}
	\bar{H}_k ^{\dagger} \bar{\Sigma}_k \bar{H}_k^{\dagger} & \mathbf{0} \\
	\mathbf{0} & \mathbf{0}
	\end{array}\right) \Lambda_k^{T}-\Lambda\left(\begin{array}{cc}
	\bar{H} ^{-1} \bar{\Sigma} \bar{H}^{-1} & \mathbf{0} \\
	\mathbf{0} & \mathbf{0}
	\end{array}\right) \Lambda^{T}\right\|\to 0~~~a.s.~~~
	\end{equation*}
%	where $\bar{H}_k$, $\bar{\Sigma}_k$, $\bar{H}$, $\bar{\Sigma}_k$ are defined in (\ref{eq:barSigma}) and (\ref{eq:barH}) respectively.
\end{thm}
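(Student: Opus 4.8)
The plan is to establish the almost sure convergence of each of the three plug-in ingredients $\Lambda_k$, $\bar{\Sigma}_k$, $\bar{H}_k$ to $\Lambda$, $\bar{\Sigma}$, $\bar{H}$ respectively, and then assemble these into the claimed convergence via continuity of the matrix operations involved. The starting point is the finite-identification property recorded in \cite[Theorem 3]{Duchi19}: since $x_k$ identifies the active face $\{x:Ax=b\}$ almost surely for $k$ large enough, the active-constraint matrix $A_k$ equals $A$ (up to relabeling rows) eventually, hence $\mathrm{P}_{A_k}=\mathrm{P}_{A}$, $r_k=r$, and one may take $\Lambda_k=\Lambda$ for all $k$ large enough on the almost sure event where identification occurs. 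This disposes of the $\Lambda_k$ term entirely and reduces the problem to showing $\bar{\Sigma}_k\to\bar{\Sigma}$ and $\bar{H}_k\to\bar{H}$ a.s., since then $\bar{H}_k^{\dagger}\to\bar{H}^{\dagger}=\bar{H}^{-1}$ ($\bar{H}$ being invertible by Assumption \ref{assu_1}(iii), as it is the $r$-order leading principal submatrix of $\Lambda^{T}J\Lambda$ which is positive definite by \cite[Lemma 4]{S_Zhao}), and the product map is continuous; note that once $\bar{H}_k$ is close to the invertible $\bar{H}$ it is itself invertible, so the Moore-Penrose inverse is genuinely the ordinary inverse and the usual discontinuity of $(\cdot)^{\dagger}$ at rank-deficient points does not bite.

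The substantive work is the two averaging limits. For $H_k=\frac{1}{k}\sum_{i=1}^k\nabla F(x_{i-1},\xi_i)$, I would split $\nabla F(x_{i-1},\xi_i)=\nabla F(x^*,\xi_i)+[\nabla F(x_{i-1},\xi_i)-\nabla F(x^*,\xi_i)]$. The average of the first term converges to $\mathbb{E}[\nabla F(x^*,\xi)]=\nabla f(x^*)=H$ a.s. by the strong law of large numbers (the $\xi_i$ are iid and $\mathbb{E}\|\nabla F(x^*,\xi)\|<\infty$ follows from Assumption \ref{plugass}). The average of the second term is bounded in norm by $\frac{1}{k}\sum_{i=1}^k L_2(\xi_i)\|x_{i-1}-x^*\|$; since $\|x_{i-1}-x^*\|\to 0$ a.s. by \cite[Theorem 2]{Duchi19} (indeed $o(\alpha_i^\delta)$ by Theorem \ref{xn-x*}) and $\frac{1}{k}\sum_{i=1}^k L_2(\xi_i)\to\mathbb{E}[L_2(\xi)]<\infty$ a.s., a Kronecker/Cesàro-type argument gives that this average tends to $0$ a.s. Hence $H_k\to H$ a.s. For $\Sigma_k$, a parallel decomposition works: write $F(x_{i-1},\xi_i)=F(x^*,\xi_i)+[F(x_{i-1},\xi_i)-F(x^*,\xi_i)]$, use the SLLN on $\frac{1}{k}\sum F(x^*,\xi_i)\to f(x^*)$ and $\frac{1}{k}\sum F(x^*,\xi_i)F(x^*,\xi_i)^T\to \mathbb{E}[F(x^*,\xi)F(x^*,\xi)^T]$ (finite second moments hold by Assumption \ref{plugass}(ii), which also supplies the boundedness needed to control cross terms), and bound the remainder terms using \eqref{lipschitz} together with $\|x_{i-1}-x^*\|\to 0$ a.s. and $\frac1k\sum L(\xi_i)\to \mathbb{E}[L(\xi)]<\infty$. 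Subtracting the outer-product correction and recognizing $\mathbb{E}[F(x^*,\xi)F(x^*,\xi)^T]-f(x^*)f(x^*)^T=\Sigma$ yields $\Sigma_k\to\Sigma$ a.s. Applying the fixed linear map $X\mapsto(\Lambda^{T})^{(r)}X((\Lambda^{T})^{(r)})^T$ then gives $\bar{\Sigma}_k\to\bar{\Sigma}$ and $\bar{H}_k\to\bar{H}$ a.s.

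Finally I would assemble: on the almost sure event where identification holds and all the above limits hold, for $k$ large the left-hand matrix equals $\Lambda\,\mathrm{diag}(\bar{H}_k^{-1}\bar{\Sigma}_k\bar{H}_k^{-1},\mathbf{0})\,\Lambda^{T}$ with $\bar{H}_k\to\bar{H}$ invertible and $\bar{\Sigma}_k\to\bar{\Sigma}$, so by continuity of inversion and multiplication on the open set of invertible matrices the difference tends to $0$ in norm. The main obstacle I anticipate is not the SLLN bookkeeping but making the finite-identification step rigorous: one must be careful that $A_k$ is the matrix of constraints active \emph{at the iterate} $x_k$, and argue that on the identification event this coincides (as a projection, hence up to an orthogonal relabeling that does not affect $\mathrm{P}_{A_k}$ or the spectral data) with $A$ for all large $k$, uniformly in the remaining randomness; the potential mismatch between "active at $x_k$" and "active at $x^*$" when $x_k$ sits exactly on the identified face is the delicate point, and Assumption \ref{assu_1}(ii) (the constraint qualification $-f(x^*)\in\operatorname{ri}\mathcal{N}_{\C}(x^*)$) is what rules out the degenerate case where extra constraints become active in the limit.
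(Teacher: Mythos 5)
Your proposal is correct and follows essentially the same route as the paper's own proof: finite identification of the active face via \cite[Theorem 3]{Duchi19} to fix $\mathrm{P}_{A_k}=\mathrm{P}_A$ and $\Lambda_k=\Lambda$ eventually, the same splits of $\nabla F(x_{i-1},\xi_i)$ and $F(x_{i-1},\xi_i)$ around $x^*$ handled by the strong law of large numbers plus the calmness bounds, and the observation that $\bar H_k$ is eventually nonsingular so $\bar H_k^{\dagger}=\bar H_k^{-1}$. The only difference is presentational: the paper organizes the assembly through explicit triangle-inequality bounds on $\|\bar H_k^{\dagger}\bar\Sigma_k\bar H_k^{\dagger}-\bar H^{-1}\bar\Sigma\bar H^{-1}\|$ rather than invoking continuity of the matrix operations abstractly.
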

\begin{proof}
%	For simplicity, we denote
%	$$F_s=\Sigma_k-\Sigma,~~~F_k=\mathrm{P}_{A_k}H_k^{\dagger} \mathrm{P}_{A_k},~~~F=\mathrm{P}_{A}H^{\dagger} \mathrm{P}_{A},~~~F_h=F_k-F.$$
%	Then,
%	The left hand of (\ref{PSP}) can be decomposed as follow
Following \cite[Theorem 3]{Duchi19}, SDA could identify the   subspace $\{x: Ax=b\}$, which implies
%	When the Assumption \ref{assu_1} is satisfied, for sufficiently large $k$,  the dual average algorithm identifies the   manifold with probability 1 by \cite[Theorem 3]{Duchi19}. In this sense, the difference of projection matrix almost surely tend to 0 for sufficiently large $k$,
\begin{equation*}
\label{P-P}
\mathrm{P}_{A_k}=\mathrm{P}_{A} ~~a.s.
\end{equation*}
for sufficiently large $k$.
As $\Lambda$ and $\Lambda_k$ are the orthogonal matrices to eigendecomposition of $P_A$ and $P_{A_k}$ with diagonal matrix $\left(\begin{array}{cc}
\mathbf{I}_{r} & \mathbf{0} \\
\mathbf{0} & \mathbf{0}
\end{array}\right)$ and $\left(\begin{array}{cc}
\mathbf{I}_{r_k} & \mathbf{0} \\
\mathbf{0} & \mathbf{0}
\end{array}\right)$ respectively, then
\begin{equation}
\label{eq:Lambda_k-Lambda}
r_k=r,\qquad\Lambda_k^{T}=\Lambda^T ~~a.s.
\end{equation}
for sufficiently large $k$.
Subsequently, 	
$$
\begin{aligned}
&\left\| \Lambda_k\left(\begin{array}{cc}
\bar{H}_k ^{\dagger} \bar{\Sigma}_k \bar{H}_k^{\dagger} & \mathbf{0} \\
\mathbf{0} & \mathbf{0}
\end{array}\right) \Lambda_k^{T}-\Lambda\left(\begin{array}{cc}
\bar{H} ^{-1} \bar{\Sigma} \bar{H}^{-1} & \mathbf{0} \\
\mathbf{0} & \mathbf{0}
\end{array}\right) \Lambda^{T}\right\| \\
&\leq
\left\| \Lambda_k\left(\begin{array}{cc}
\bar{H}_k ^{\dagger} \bar{\Sigma}_k \bar{H}_k^{\dagger}-\bar{H} ^{-1} \bar{\Sigma} \bar{H}^{-1} & \mathbf{0} \\
\mathbf{0} & \mathbf{0}
\end{array}\right) \Lambda_k^{T}\right\|
+2
\left\| \Lambda_k\right\| \left\|  \left(\begin{array}{cc}
\bar{H} ^{-1} \bar{\Sigma} \bar{H}^{-1} & \mathbf{0} \\
\mathbf{0} & \mathbf{0}
\end{array}\right)\right\| \left\| \Lambda_k^T-\Lambda^T\right\|.
\end{aligned}
$$	
Note that $\|\Lambda\|$, $\left\|H \right\| $ and $\|\Sigma\|$ are bounded and (\ref{eq:Lambda_k-Lambda}) holds,
we only need to study the consistency of $\left\|\bar{H}_k ^{\dagger} \bar{\Sigma}_k \bar{H}_k^{\dagger}-\bar{H} ^{-1} \bar{\Sigma} \bar{H}^{-1}\right\|  $.

%In what follows, we focus on the convergence of $\left\|\bar{H}_k ^{\dagger} \bar{\Sigma}_k \bar{H}_k^{\dagger}-\bar{H} ^{-1} \bar{\Sigma} \bar{H}^{-1}\right\|  $.
%By the submultiplicativity of matrix norm,
Obviously,
\begin{equation*}
\begin{aligned}
&\left\|\bar{H}_k ^{\dagger} \bar{\Sigma}_k \bar{H}_k^{\dagger}-\bar{H} ^{-1} \bar{\Sigma} \bar{H}^{-1}\right\|
%	=&
%	\left\|(F+F_h)(\Sigma+F_s)(F+F_h)-F\Sigma F\right\|\\
	\leq&
	\|\bar{H}_k ^{\dagger}\|^2\|\bar{\Sigma}_k-\bar{\Sigma}\|
	+
	\|\bar{H}_k ^{\dagger}-\bar{H} ^{-1}\|^2\|\bar{\Sigma}\|
	+
	2\|\bar{H} ^{-1}\|\|\bar{\Sigma} \|\|\bar{H}_k ^{\dagger}-\bar{H} ^{-1}\|.
\end{aligned}
\end{equation*}
	Note that
%	$\|F_k\|=\left\|H_k^{\dagger}\right\|_{2}\leq \lambda_{min}^{-1}(H_k)$,
	$\|\bar{H} ^{-1}\|$  and $\|\bar{\Sigma}\|$ are finite, it is sufficient to show  $\|\bar{H}_k ^{\dagger}-\bar{H} ^{-1}\| $ and $\|\bar{\Sigma}_k-\bar{\Sigma}\| $  converge to zero almost surely.
	By the definition of $H_k$,
	\begin{eqnarray}
	\nonumber\|H_k-H\|&=&\left\| \frac{1}{k}\sum_{i=1}^{k}\nabla F(x_{i-1},\xi_i)-H\right\| \\
	\label{hn-h}&\leq&\left\| \frac{1}{k}\sum_{i=1}^{k}\nabla F(x^*,\xi_i)-H\right\| +\left\| \frac{1}{k}\sum_{i=1}^{k}\left(\nabla F(x_{i-1},\xi_i)-\nabla F(x^*,\xi_i)\right)\right\| .
	\end{eqnarray}
As {$\xi_1,\xi_2,\cdots, \xi_k$} is iid sample, the strong law of large numbers ensures the first term on the right hand of (\ref{hn-h}) converges to zero almost surely.
	By {Assumption} \ref{plugass} (i),
	the second term on the right hand  of (\ref{hn-h})
\begin{equation*}
\begin{aligned}
	\label{3.36}\left\|\frac{1}{k}\sum_{i=1}^{k}\left(\nabla F(x_{i-1},\xi_i)-\nabla F(x^*,\xi_i)\right)\right\|
%	\leq&\frac{1}{k}\sum_{i=1}^{k}\left\|\nabla F(x_{i-1},\xi_i)-\nabla F(x^*,\xi_i)\right\|\\
\leq& \frac{1}{k}\sum_{i=1}^{k}L_2(\xi_{i})\|x_{i-1}-x^*\|,
\end{aligned}
\end{equation*}
which converges to zero as $x_{k}\to x^*$   almost surely  \cite[Theorem 2]{Duchi19}.
By the consistency of $H_k$ and  (\ref{eq:Lambda_k-Lambda}), $\bar{H}_k$ is nonsingular for sufficiently large $k$, that is, $\bar{H}_k^\dagger=\bar{H}_k^{-1}$.
Then $\|\bar{H}_k ^\dagger-\bar{H} ^{-1}\|\to 0$  almost surely  as $\|\bar{H}_k-\bar{H}\|\to 0$ almost surely.

Next, we study the convergence of $\|\bar{\Sigma}_k-\bar{\Sigma}\|$. By the definition of $\bar{\Sigma}_k$ and $\bar{\Sigma}$,
$$
\begin{aligned}
\left\| \bar{\Sigma}_k-\bar{\Sigma}\right\|
=&
\|\left(\Lambda_k^{T}\right)^{({r_k})}\Sigma_k\left( \left(\Lambda_k^{T}\right)^{({r_k})}\right) ^T-\left(\Lambda^{T}\right)^{(r)}\Sigma\left( \left(\Lambda^{T}\right)^{(r)}\right) ^T\|\\
\leq&
\|\left(\Lambda_k^{T}\right)^{({r_k})}\left( \Sigma_k-\Sigma\right) \left( \left(\Lambda_k^{T}\right)^{({r_k})}\right) ^T\|
+
2\|\left(\Lambda_k^{T}\right)^{({r_k})}\|\|\Sigma\|\|\left(\Lambda_k^{T}\right)^{({r_k})}-\left(\Lambda^{T}\right)^{(r)}\|.
\end{aligned}
$$
%	For $\bar{\Sigma}_k$, we decompose $F(x_{i-1},\xi_i)$ into the following terms:
%	$$F(x_{i-1},\xi_i)=F(x^*,\xi_i)+(F(x_{i-1},\xi_i)-F(x^*,\xi_i)).$$
Therefore, it is sufficient to show  $\|{\Sigma}_k-{\Sigma}\|\to 0$ almost surely.
	For easy of notation, we denote
	$$X_i:=F(x^*,\xi_i),~~~Y_i:=F(x_{i-1},\xi_i)-F(x^*,\xi_i),$$
	then,
	\begin{equation}
	\begin{aligned}
	\label{sigma}
	\|{\Sigma}_k-{\Sigma}\|
	=&
	\left\| \frac{1}{k}\sum_{i=1}^{k}(X_i+Y_i)(X_i+Y_i)^T-\left[ \frac{1}{k}\sum_{i=1}^{k}(X_i+Y_i)\right] \left[\frac{1}{k}\sum_{i=1}^{k}(X_i+Y_i)\right]^T-\Sigma\right\| \\
	\leq&
	\left\| \frac{1}{k}\sum_{i=1}^{k}X_iX_i^T-\left[\frac{1}{k}\sum_{i=1}^{k}X_i\right]\left[\frac{1}{k}\sum_{i=1}^{k}X_i\right]^T-\Sigma\right\|
	 +
	 \left\| \frac{1}{k}\sum_{i=1}^{k}Y_iY_i^T\right\|
	 \\
	 &
	 {+
	 \frac{2}{k}\sum_{i=1}^{k}\left\|X_iY_i^T\right\|
	 +\left\| \frac{1}{k}\sum_{i=1}^{k}Y_i\right\| ^2}
	 +
	 2\left\| \frac{1}{k}\sum_{i=1}^{k}X_i\right\| \left\| \frac{1}{k}\sum_{i=1}^{k}Y_i\right\|
	 .
	\end{aligned}
	\end{equation}
	Again, the strong law of large numbers implies the first term on the right hand of (\ref{sigma})  tends to zero almost surely.
	By Assumptions \ref{assu_1} (i) and  \ref{plugass} (ii), the last four terms on the right hand of (\ref{sigma})
			$$
\begin{aligned}
&\left\|\frac{1}{k}\sum_{i=1}^{k}Y_iY_i^T\right\|
%\leq\frac{1}{k}\sum_{i=1}^{k}\|F(x_{i-1},\xi_i)-F(x^*,\xi_i)\|^2
\leq \frac{1}{k}\sum_{i=1}^{k}L(\xi_i)^2\|x_{i-1}-x^*\|^2\to0~~~a.s.,\\
&\left\|\frac{1}{k}\sum_{i=1}^{k}Y_i\right\|
%\leq \frac{1}{k}\sum_{i=1}^{k}\|F(x_{i-1},\xi_i)-F(x^*,\xi_i)\|
\leq \frac{1}{k}\sum_{i=1}^{k}L(\xi_i)\|x_{i-1}-x^*\|\to0~~~a.s.,\\
&\frac{2}{k}\sum_{i=1}^{k}\left\|X_iY_i^T\right\|
\leq\frac{2}{k}\sum_{i=1}^{k}\left\|X_i\right\|\left\|Y_i\right\|\to 0~~~a.s.,\\
&\left\| \frac{1}{k}\sum_{i=1}^{k}X_i\right\| \left\| \frac{1}{k}\sum_{i=1}^{k}Y_i\right\|\lesssim\frac{1}{k}\sum_{i=1}^{k}L(\xi_i)\|x_{i-1}-x^*\|\to0~~~a.s.
\end{aligned}
	$$
%	Moreover, by the strong law of large numbers, $\left\|\frac{1}{k}\sum_{i=1}^{k}X_i\right\|$ is bounded, the last term on the right hand of (\ref{sigma}) tends to zero almost surely.
Then,
	$\left\|{\Sigma}_k-\Sigma\right\|\to 0$ almost surely.
The proof is complete.
\end{proof}
%
%Compared with Theorem 4.2 in
%\cite{Chen16}, {where the convergence rate {almost} $O(k^{-\frac{1}{2}})$ in expectation of the plug-in estimator  has been obtained,} Theorem \ref{T3.1} shows that the plug-in estimator converges to the true covariance matrix almost surely.
 %The underlying reason is that   we are only able to arrive at $P_{A_k}= P_A$ almost surely for sufficiently large $k$.

Next, we study the consistency of plug-in method for estimating the covariance matrix in the limit normal distribution of last iterate of SDA (Theorem \ref{thm:non-ave}).
Let $\Lambda_k, P_{A_k}, H_k, \Sigma_k, r_k$ be defined as above and  $G_k$ be the $r$-order leading {principle submatrix} of $\Lambda_k^{T} P_{A_k}  H_k P_{A_k} \Lambda_k$.
Then $\Lambda_k$ and
\bgeq
%\label{eq:sigmak}
\Sigma_{1_k}:=\int_{0}^{\infty} e^{(-G_k) t}\left(\Lambda_k^{T}\right)^{({r_k})} P_{A_k} {\Sigma_k} P_{A_k}\left( \left(\Lambda_k^{T}\right)^{({r_k})}\right) ^T e^{\left(-G_k^{T}\right) t} \mathrm{~d} t
\edeq
 are the plug-in estimators of $\Lambda$ and $ \Sigma_1$   in (\ref{Sigma1}) respectively. \footnote{We may use sample average approximation method to calculate the integration in $t$.}

\begin{thm}
	\label{T3.2}
	Suppose that (i) Assumptions \ref{assu_1}, \ref{ass-batch}  and \ref{plugass} hold, (ii) step-size
	$\alpha_{k}=\frac{\alpha_0}{k^{\beta}} \text { with }
	\beta  \in\left(\frac{2}{3}, 1\right)$ and $ \alpha_{0}>0$.
Denote $
	\begin{array}{c}
	\tilde{\Sigma}_k=\Lambda_k\left(\begin{array}{cc}
	\Sigma_{1_k} & \mathbf{0}\\
	\mathbf{0} & \mathbf{0}
	\end{array}\right) \Lambda_k^{T}.
	\end{array}
	$
 Then
	\begin{equation*}
	\left\|\tilde{\Sigma}_k-\tilde{\Sigma}\right\| \to 0~~~{a.s.,}
	\end{equation*}
	where $\tilde{\Sigma}$ is defined in (\ref{Sigma1}).
\end{thm}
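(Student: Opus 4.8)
The plan is to use the subspace–identification property of SDA to reduce the statement to the almost sure convergence $\|\Sigma_{1_k}-\Sigma_1\|\to0$, and then to read this off from the continuity of the solution of a Lyapunov equation in its data. First, exactly as in the proof of Theorem~\ref{T3.1}, \cite[Theorem 3]{Duchi19} shows that SDA identifies $\{x:Ax=b\}$, so that for all sufficiently large $k$ one has $P_{A_k}=P_A$, $r_k=r$, and $\Lambda_k=\Lambda$ almost surely; consequently
$$
\tilde{\Sigma}_k-\tilde{\Sigma}=\Lambda\left(\begin{array}{cc}\Sigma_{1_k}-\Sigma_1&\mathbf{0}\\\mathbf{0}&\mathbf{0}\end{array}\right)\Lambda^T ,
$$
so $\|\tilde{\Sigma}_k-\tilde{\Sigma}\|=\|\Sigma_{1_k}-\Sigma_1\|$ and it suffices to prove $\|\Sigma_{1_k}-\Sigma_1\|\to0$ a.s. On this identification event $G_k$ is the $r$-order leading principal submatrix of $\Lambda^T P_A H_k P_A\Lambda$, while $G$ is that of $\Lambda^T J\Lambda=\Lambda^T P_A H P_A\Lambda$; since the proof of Theorem~\ref{T3.1} already delivers $\|H_k-H\|\to0$ and $\|\Sigma_k-\Sigma\|\to0$ a.s., one obtains $G_k\to G$ a.s.\ and $M_k:=(\Lambda^T)^{(r)}P_A\Sigma_kP_A((\Lambda^T)^{(r)})^T$ converges a.s.\ to $M:=(\Lambda^T)^{(r)}P_A\Sigma P_A((\Lambda^T)^{(r)})^T$.

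Next I would exploit that $\Sigma_1=\int_0^\infty e^{-Gt}Me^{-G^Tt}\,\mathrm{d}t$ is the unique solution of the Lyapunov equation $G\Sigma_1+\Sigma_1G^T=M$, and likewise $\Sigma_{1_k}$ solves $G_k\Sigma_{1_k}+\Sigma_{1_k}G_k^T=M_k$ once $-G_k$ is stable. By \cite[Lemma 4]{S_Zhao} the limit $G$ is positive definite, hence $-G$ is stable, and since $G_k\to G$ the matrix $-G_k$ is stable for all large $k$; on the open set of such data the solution map $(G,M)\mapsto\Sigma_1$ is continuous, which gives $\Sigma_{1_k}\to\Sigma_1$ a.s. Equivalently, one may bound directly
$$
\|\Sigma_{1_k}-\Sigma_1\|\le\int_0^\infty\left\|e^{-G_kt}M_ke^{-G_k^Tt}-e^{-Gt}Me^{-G^Tt}\right\|\,\mathrm{d}t ,
$$
split the integrand into the three differences carried by $e^{-G_kt}-e^{-Gt}$, $M_k-M$, and $e^{-G_k^Tt}-e^{-G^Tt}$, dominate with a uniform majorant $\|e^{-G_kt}\|\le C(1+t)^{r-1}e^{-ct}$ valid for large $k$ together with $\|e^{-G_kt}-e^{-Gt}\|\le t\|G_k-G\|\sup_{s\in[0,1]}\|e^{-(sG_k+(1-s)G)t}\|$, and pass to the limit by dominated convergence using the convergences of the previous paragraph. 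Combined with the reduction above, this yields $\|\tilde{\Sigma}_k-\tilde{\Sigma}\|\to0$ a.s.

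The main obstacle is the \emph{uniform in $k$} exponential decay bound $\|e^{-G_kt}\|\le C(1+t)^{r-1}e^{-ct}$: because the $G_k$ need not be symmetric one cannot simply invoke $\lambda_{\min}(G_k)$, so this must be extracted from the spectral gap of the limit $G$ — for instance via a common quadratic Lyapunov function valid on a neighbourhood of $G$, or, more cleanly, by phrasing the whole step as continuity of the solution operator of the Lyapunov equation on the open cone of matrices whose negative is stable. Everything else — the subspace identification, $H_k\to H$ and $\Sigma_k\to\Sigma$ a.s. — is already available from Theorem~\ref{T3.1}, so once this uniform bound is secured the argument closes routinely.
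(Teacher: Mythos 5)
Your proposal follows essentially the same route as the paper: reduce via the subspace-identification property to $\|\Sigma_{1_k}-\Sigma_1\|\to 0$ a.s., characterize $\Sigma_1$ and $\Sigma_{1_k}$ as solutions of Lyapunov equations with data $(G,Q)$ and $(G_k,Q_k)$, and conclude from the convergence $G_k\to G$, $Q_k\to Q$ by continuity of the Lyapunov solution operator. The ``main obstacle'' you flag is handled in the paper by invoking the quantitative perturbation bound of Hewer--Kenney, which gives $\|\Sigma_{1_k}-\Sigma_1\|\le \frac{1}{2\mu}\left[\|Q_k-Q\|+2\|G_k-G\|\,\|\Sigma_1\|\right]$ once $\left\|\int_0^\infty e^{-Gt}e^{-G^Tt}\,dt\right\|\le\frac{1}{2\mu}$ is established; the latter follows directly because Assumption \ref{assu_1}(iii) makes the symmetric part of $G$ bounded below by $\mu \mathbf{I}_r$, so no separate uniform-in-$k$ decay estimate for $e^{-G_kt}$ is needed.
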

\begin{proof}
By (\ref{eq:Lambda_k-Lambda})
	 and  the definitions of $\tilde{\Sigma}_k$ and $\tilde{\Sigma}$,
\begin{equation*}
\label{S-s}
	\begin{aligned}
\left\|\tilde{\Sigma}_k-\tilde{\Sigma}\right\|
&=
\left\| \Lambda_k\left(\begin{array}{cc}
\Sigma_{1_k} & \mathbf{0} \\
\mathbf{0} & \mathbf{0}
\end{array}\right) \Lambda_k^{T}-\Lambda\left(\begin{array}{cc}
\Sigma_{1} & \mathbf{0} \\
\mathbf{0} & \mathbf{0}
\end{array}\right) \Lambda^{T}\right\| \\
&\leq
\left\| \Lambda_k\left(\begin{array}{cc}
\Sigma_{1_k}-\Sigma_{1} & \mathbf{0} \\
\mathbf{0} & \mathbf{0}
\end{array}\right) \Lambda_k^{T}\right\|
%+
%\left\|  \left( \Lambda_k-\Lambda\right) \left(\begin{array}{cc}
%\Sigma_{1} & \mathbf{0} \\
%\mathbf{0} & \mathbf{0}
%\end{array}\right)\Lambda^T\right\|
+2
 \left\| \Lambda_k\right\| \left\|  \left(\begin{array}{cc}
\Sigma_{1} & \mathbf{0} \\
\mathbf{0} & \mathbf{0}
\end{array}\right)\right\| \left\| \Lambda_k^T-\Lambda^T\right\|
	\end{aligned}
\end{equation*}
for sufficiently large $k$.
Note that $\|\Lambda\|$ and $\|\Sigma_{1}\|$ are bounded,  it is sufficient to show that  $\|\Sigma_{1_k}-\Sigma_{1}\|$ converges to zero.

%As $\Lambda$ and $\Lambda_k$ are the orthogonal matrices to singular value decomposition of $P_{A_k}$ and $P_A$  respectively and $P_{A_k}\to P$ almost surely \cite[Theorem 3]{Duchi19},
% there exists a sequence $\{\Lambda_k\}$  converges to $\Lambda$. Then
% $\left\| \Lambda_k^{T}-\Lambda^T \right\|\to0$   almost surely.

{We} employ the sensitivity of  solution for Lyapunov equation \cite[Theorem 2.1]{Hewer88} to study the convergence of $\|\Sigma_{1_k}-\Sigma_{1}\|$.
Denote
$$
Q=\left(\Lambda^{T}\right)^{(r)} P_{A} {\Sigma} P_{A}\left( \left(\Lambda^{T}\right)^{(r)}\right) ^T,
$$
we have
$$
\begin{aligned}
(-G)^{T} \Sigma_1+\Sigma_1 (-G) &=(-G)^{T}\left(\int_{0}^{\infty} e^{(-G)^{T} t} Q e^{(-G) t} d t\right)+\left(\int_{0}^{\infty} e^{(-G)^{T} t} Q e^{(-G) t} d t\right) (-G) \\
&=\int_{0}^{\infty} \frac{d}{d t}\left(e^{(-G)^{T} t} Q e^{(-G) t}\right) d t \\
&=\left.e^{(-G)^{T} t} Q e^{(-G) t}\right|_{0} ^{\infty}=-Q,
\end{aligned}
$$
which means $\Sigma_{1}$   is the solution of Lyapunov equation
\begin{equation*}
\label{Lya}
(-G)^{T} X+X (-G)+Q=0,
\end{equation*}
where $G$ is defined in Theorem \ref{thm:non-ave}.
By the similar analysis,  $\Sigma_{1_k}$ is the solution of Lyapunov equation
\begin{equation*}
\label{Lya_k}
(-G_k)^{T} X+X (-G_k)+Q_k=0,
\end{equation*}
where
$$
Q_k= \left(\Lambda_k^{T}\right)^{({r_k})} P_{A_k} {\Sigma_k} P_{A_k}\left( \left(\Lambda_k^{T}\right)^{({r_k})}\right) ^T
$$
and $G_k$  is the $r_k$-order leading {principle submatrix} of $\Lambda_k^{T} P_{A_k}  H_k P_{A_k} \Lambda_k$.
%Next, we verify the conditions of the sensitivity of  solution to Lyapunov equation \cite[Theorem 2.1]{Hewer88}.
%The (\ref{Lya}) and  (\ref{Lya_k}) are exact formula (1.1) and (1.3) in \cite[Theorem 2.1]{Hewer88}.
%By the definition $-G$, the condition (1.2) of   \cite[Theorem 2.1]{Hewer88} holds.
Denote
$$
W=\int_{0}^{\infty} e^{(-G) t} e^{\left(-G^{T}\right) t} \mathrm{~d} t,$$
we have
\begin{equation*}
\begin{aligned}
\label{W}
(-G)^{T} W+W (-G)
&=
(-G)^{T} \left(\int_{0}^{\infty} e^{(-G) t} e^{\left(-G^{T}\right) t} \mathrm{~d} t\right)+\left(\int_{0}^{\infty} e^{(-G) t} e^{\left(-G^{T}\right) t} \mathrm{~d} t\right) (-G)\\
&=\int_{0}^{\infty} \frac{d}{d t}\left(e^{(-G)^{T} t} e^{(-G) t}\right) d t \\
&=\left.e^{(-G)^{T} t}  e^{(-G) t}\right|_{0} ^{\infty}=-\mathbf{I}_n.
\end{aligned}
\end{equation*}
 By  Assumption \ref{assu_1} (iii),
$$
\begin{aligned}
\left\|  W\right\|
&=
\int_{0}^{\infty}  \left\| e^{-G t}\right\|^2 \mathrm{~d} t
&\leq
\int_{0}^{\infty}  \left\| e^{-2\mu t}\right\| \mathrm{~d} t
&=\frac{1}{2\mu}.
\end{aligned}
$$
Moreover, $-G$ is stable.
Then,
by the sensitivity of  solution to Lyapunov equation \cite[Page 327, last inequality]{Hewer88},
 $$
 \begin{aligned}
 \left\| \Sigma_{1_k}-\Sigma_{1}\right\|
 &\leq
 \frac{1}{2\mu}\left[ \left\| Q_k-Q\right\|+2 \left\| G-G_k\right\|\|\Sigma_{1}\|\right].
\end{aligned}
 $$
Mimicking the proof of Theorem \ref{T3.1}, it is easy to show
 $
 \left\|Q_k-Q\right\|\to 0$ and
 $\left\|G-G_k \right\| \to 0$ almost surely. Then $\left\| \Sigma_{1_k}-\Sigma_{1}\right\|$ tends to zero almost surely. The proof is complete.
\end{proof}

%Note that $\Sigma_{1_k}$ in (\ref{eq:sigmak}) depends on the  integration,.  {It is easy to verify that  with  the sample size of SAA method tends to infinity, Theorem \ref{T3.2} holds.}

\subsection{Batch-means method}
Different with plug-in method, batch-means method only uses the iterates from {SDA} without
requiring computation of any additional quantities.
%We split $k$ iterates $\left\{x_{1}, \ldots, x_{k}\right\}$ into $M+1$ batches with sizes $n_{0}, n_{1}, \ldots, n_{M}$.
%%$$
%%\underbrace{\left\{x_{s_{0}}, \ldots, x_{e_{0}}\right\}}_{0 \text { -th batch }}, \underbrace{\left\{x_{s_{1}}, \ldots x_{e_{1}}\right\}}_{\text {1-st batch }}, \ldots, \underbrace{\left\{x_{s_{M}}, \ldots, x_{e_{M}}\right\}}_{M \text { -th batch }} .
%%$$
%Let $s_{i}$ and $e_{i}$ be the starting and ending index of $i$-th batch with $s_{0}=1$, $s_{i}=e_{i-1}+1, n_{i}=e_{i}-s_{i}+1$, $e_{M}=k$ and $E_M=e_{M}-e_0.$ Setting batch size $e_i=((i+1)N)^{\frac{1}{1-\beta}}$, where  $M=k^{\frac{1-\beta}{2}}$ and $N=\frac{k^{1-\beta}}{M+1}$.
{Let $\{x_k\}$ be a sequence of iterates of {SDA}, we define the strictly increasing
integer-valued sequence $ \{a_m\}$ with
$a_1=1$ and $a_m = \left[ Cm^\frac{2}{1-\beta}\right] _+$ for some constant $C$. Then we split the iterates into $M$ batches with the starting  index $a_m$ of $m$-th batch. {The batch-means estimator \cite[(5)]{zhu2021online} of covariance matrix in (\ref{eq:T2.1}) is given as follows:
\begin{equation}
\label{X_n}
\frac{\sum_{i=1}^{k}\left(\sum_{j=t_{i}}^{i} x_{j}-l_{i} \bar{x}_{k}\right)\left(\sum_{j=t_{i}}^{i} x_{j}-l_{i} \bar{x}_{k}\right)^{T}}{\sum_{i=1}^{k} l_{i}},
\end{equation}
where $t_i$ is determined
by the sequence $\{a_m\}$ through $t_i = a_m$ when $i\in [a_m, a_{m+1})$,  $\bar{x}_{k}=\frac{1}{k}\sum_{i=1}^{k}x_i$,  $l_{i}=i-t_{i}+1$.}}

Although the batch-means {estimator} {is the same as} the batch-means estimator for {SGD \cite{zhu2021online}}, the proof of convergence  of
(\ref{X_n})
is not straightforward at all.
If we follow \cite[Theorem 3.3]{zhu2021online}
to explore the consistency of batch-means estimator in expectation, the required convergence rate of the {iterates}  $x_{k}$ to the true solution   $x^*$     is not reachable. On the other hand, if we follow Theorems \ref{T3.1}-\ref{T3.2} to
 study {almost sure} convergence of the batch-means estimator, we are unable to show  the  convergence of  the indispensable auxiliary sequence (see the following formula (\ref{eq:U_k}))  to the true covariance matrix. Therefore, we have to establish the consistency of batch-means estimator through the techniques both for convergence in expectation and {almost sure} convergence. Following the idea of   \cite[Theorem 3.3]{zhu2021online}, we investigate  the consistency of (\ref{X_n}) by the following three steps.
\begin{enumerate}

	\setlength{\itemindent}{2em}
	\item[\textbf{{Step 1.}}]
	
	[Lemma \ref{N_nto0}] Define an auxiliary sequence $U_k$,
\begin{equation}
	\label{eq:U_k}
	U_{k}:=\left(\mathbf{I}_n-\alpha_{k-1} \mathrm{P}_{A} \nabla f\left(x^*\right) \mathrm{P}_{A} \right)U_{k-1}+\alpha_{k-1}S_{k-1},~U_0\in\mathcal{T}_{\C}\left(x^*\right),
\end{equation}
{where $S_k$ and $\mathcal{T}_{\C}\left(x^*\right)$ are defined in (\ref{de:J}) and (\ref{eq:mathcal{T}}) respectively.}
	Construct the batch-means estimator based on $U_k$
	as
	\begin{equation}
	\label{UU}
	\frac{\sum_{i=1}^{k}\left(\sum_{j=t_{i}}^{i} U_{j}-l_{i} \bar{U}_{k}\right)\left(\sum_{j=t_{i}}^{i} U_{j}-l_{i} \bar{U}_{k}\right)^{T}}{\sum_{i=1}^{k} l_{i}},
	\end{equation}
	where $\bar{U}_{k}=\frac{1}{k}\sum_{i=1}^{k}U_i$.
%	\begin{equation}
%	\label{eq:U_nk}
%	\bar{U}_{n_{i}}:=\frac{1}{n_{i}} \sum_{k=s_{i}}^{e_{i}} U_{k} \quad \text{and} \quad \bar{U}_{M}:=\frac{1}{e_{M}-e_{0}} \sum_{k=s_{1}}^{e_{M}} U_{k}.
%	\end{equation}
	{Study 
	 $$\mathbb{E}\left[\left\|  \left( {\sum_{i=1}^{k} l_{i}}\right) ^{-1}{\sum_{i=1}^{k}\left(\sum_{j=t_{i}}^{i} U_{j}-l_{i} \bar{U}_{k}\right)\left(\sum_{j=t_{i}}^{i} U_{j}-l_{i} \bar{U}_{k}\right)^{T}}-\mathrm{P}_{A}H^{\dagger} \mathrm{P}_{A} \Sigma \mathrm{P}_{A}H^{\dagger} \mathrm{P}_{A}\right\| \right]\to 0. $$}
%	 We show that $M^{-1} \sum_{i=1}^{M} n_{i}\left(\bar{U}_{n_{i}}-\bar{U}_{M}\right)\left(\bar{U}_{n_{i}}-\bar{U}_{M}\right)^{T}$ converges to $\mathrm{P}_{A}H^{\dagger} \mathrm{P}_{A} \Sigma \mathrm{P}_{A}H^{\dagger} \mathrm{P}_{A}$ in expectation.
%	The main reason for the selection of convergence expectation is that the  convergence rate in almost surely  is too large to ensure the batch-means estimator based on $U_n$ converge to $\mathrm{P}_{A}H^{\dagger} \mathrm{P}_{A} \Sigma \mathrm{P}_{A}H^{\dagger} \mathrm{P}_{A}$. Although the noise $S_k$ is martingale difference sequences, it eliminate the term which consist of the  $M^{-1} \sum_{k=1}^{M} n_{k}\left(\bar{U}_{n_{k}}-\bar{U}_{M}\right)\left(\bar{U}_{n_{k}}-\bar{U}_{M}\right)^{T}$ to guarantee the convergence.
	\item[\textbf{{Step 2.}}]
	
	[Lemma \ref{delta_kto0}] Show the difference between (\ref{X_n}) and (\ref{UU}),
{\footnotesize$$
\left\|\frac{\sum_{i=1}^{k}\left(\sum_{j=t_{i}}^{i} x_{j}-l_{i} \bar{x}_{k}\right)\left(\sum_{j=t_{i}}^{i} x_{j}-l_{i} \bar{x}_{k}\right)^{T}
-
\sum_{i=1}^{k}\left(\sum_{j=t_{i}}^{i} U_{j}-l_{i} \bar{U}_{k}\right)\left(\sum_{j=t_{i}}^{i} U_{j}-l_{i} \bar{U}_{k}\right)^{T}}{\sum_{i=1}^{k} l_{i}} \right\|\stackrel{d}{\longrightarrow}0.$$}
\item[\textbf{{Step 3.}}]

[Theorem \ref{XX}]
%The convergence in expectation and convergence in almost surely imply the convergence in distribution.
Combine the convergence in expectation {in the first step} and convergence in distribution {in the second step},
$$\left\|\frac{\sum_{i=1}^{k}\left(\sum_{j=t_{i}}^{i} x_{j}-l_{i} \bar{x}_{k}\right)\left(\sum_{j=t_{i}}^{i} x_{j}-l_{i} \bar{x}_{k}\right)^{T}}{\sum_{i=1}^{k} l_{i}}
-
P_AH^\dagger P_A\Sigma P_AH^\dagger P_A\right\|\stackrel{d}{\longrightarrow}0.$$
\end{enumerate}

%Due to the effect of constraint set, we need define the seminorm first:
%$$
%\normmm{T}_{\mathcal{T}}:=\sup \{\|T x\|: x \in \mathcal{T},\|x\| \leq 1\},
%$$
%where $\mathcal{T}$ is a subspace $\{x:Ax=0\}.$
%We take $\normmm{T}_{\mathcal{T}}=0$ if $\mathcal{T}=\{0\} .$ Clearly $\normmm{T}_{\mathcal{T}} \leq\normmm{T}_{\text {op }},$ where $\normmm{T}_{\text {op }}:=$ $\sup \{\|T x\|:\|x\| \leq 1\}$ is the $\ell_{2}$ -operator norm.
{We begin by starting some technical lemmas where the convergence of the fourth moment of  $\Delta_{k}$  and the convergence rate of $U_k$ are studied.}

\begin{lema}
	\label{Delta4-bounded} Suppose that
 (i) {Assumptions \ref{assu_1} and  \ref{ass-batch} hold},  (ii) step-size
 {$\alpha_{k} =\alpha_{0} k^{-\beta} \text { with }
 	\beta  \in\left(\frac{2}{3}, 1\right)$} and $ \alpha_{0}>0$. Let $\Delta_k$ be defined as in (\ref{de:J}). Then
	$\mathbb{E}\left[\|\Delta_{k}\|^2\right]\to 0.$
\end{lema}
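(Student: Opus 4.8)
The plan is to read off Lemma \ref{Delta4-bounded} from two facts that are already available: the almost sure convergence $x_k\to x^*$ and the uniform boundedness forced by Assumption \ref{ass-batch}; the passage from almost sure to mean-square convergence is then just the bounded convergence theorem. First I would record that boundedness of $\C$ gives a constant $R<\infty$ with $\|x-x^*\|\le R$ for every $x\in\C$, and since $P_A$ is an orthogonal projection (recall $\Delta_k=P_A(x_k-x^*)$ from \eqref{de:J}) this yields $\|\Delta_k\|=\|P_A(x_k-x^*)\|\le\|x_k-x^*\|\le R$ for all $k$, so $\{\|\Delta_k\|^2\}$ is dominated by the constant $R^2$, uniformly in $k$ and in $\xi$.

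Next I would establish $\Delta_k\to 0_n$ almost surely. Theorem \ref{xn-x*}, whose hypotheses coincide with those of the present lemma, gives $\|x_k-x^*\|=o(\alpha_k^\delta)$ a.s.\ for any $\delta\in(0,1-\tfrac{1}{2\beta})$, a nonempty range since $\beta\in(\tfrac23,1)$; because $\alpha_k\to 0$ this forces $x_k\to x^*$ a.s., hence $\Delta_k\to 0_n$ a.s. (One could instead invoke only the a.s.\ consistency $x_k\to x^*$ of \cite[Theorem 2]{Duchi19}.) Combining $\|\Delta_k\|^2\to 0$ a.s.\ with the uniform bound $\|\Delta_k\|^2\le R^2$, the bounded convergence theorem gives $\mathbb{E}[\|\Delta_k\|^2]\to 0$. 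The same argument delivers $\mathbb{E}[\|\Delta_k\|^p]\to 0$ for every $p\ge 1$, so the fourth-moment bound invoked later in Step 1 follows verbatim.

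I do not expect a genuine obstacle once Theorem \ref{xn-x*} is in hand; the only point that needs care is the upgrade from almost sure to $L^2$ convergence, and that is exactly the job of Assumption \ref{ass-batch}. Without boundedness of $\C$ one would have to run the (more delicate) recursion for $\mathbb{E}[\|\Delta_k\|^2]$ obtained from \eqref{Delta_k1} by squaring and taking expectations: the martingale-difference term $S_k$ contributes only $\alpha_k^2\,\mathbb{E}[\|S_k\|^2]$, while the awkward pieces are the linearization error $\zeta_k$ (quadratic in $\|x_k-x^*\|$ by \eqref{qu}) and the identification error $\epsilon_k$ before finite identification sets in. Assumption \ref{ass-batch} short-circuits all of this, which is why it is imposed here and in Theorem \ref{xn-x*}.
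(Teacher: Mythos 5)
Your proposal is correct and follows essentially the same route as the paper: the paper's proof likewise notes that Assumption \ref{ass-batch} makes $\|\Delta_k\|^2$ uniformly bounded, invokes $x_k\to x^*$ a.s.\ from \cite[Theorem 2]{Duchi19}, and concludes by dominated (bounded) convergence. Your additional remarks on the non-expansiveness of $P_A$ and on higher moments are consistent elaborations but not a different argument.
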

\begin{proof}  By the definition of $\Delta_k$  and  Assumption \ref{ass-batch}, $\|\Delta_{k}\|^2$ is bounded.   Then the rest follows from the fact $x_k\to x^*$ \cite[Theorem 2]{Duchi19} and the Lebesgue dominated convergence theorem.
\end{proof}

\begin{lema}\emph{[Convergence rate of $U_k$]}
	\label{EU_n^2}
	Suppose that
	(i) {Assumptions \ref{assu_1} and  \ref{ass-batch} hold},  (ii) step-size
	{$\alpha_{k} =\alpha_{0} k^{-\beta} \text { with }
		\beta  \in\left(\frac{2}{3}, 1\right)$} and $ \alpha_{0}>0$. Let $U_k$ be defined as in (\ref{eq:U_k}). Then
$$
\mathbb{E}\left[\left\|U_k \right\| ^2\right]\lesssim k^{-\beta}.
$$
\end{lema}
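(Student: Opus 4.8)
The plan is to analyze the recursion \eqref{eq:U_k} directly, treating it as a linear stochastic recursion driven by the martingale-difference noise $S_{k}$. Writing $B_{k}:=\mathbf{I}_n-\alpha_{k-1}\mathrm{P}_{A}\nabla f(x^*)\mathrm{P}_{A}$ and recalling that $U_0\in\mathcal{T}_{\C}(x^*)$, one first checks that the iterates stay in the relevant subspace, so that $U_k=\mathrm{P}_A U_k$ and the contraction properties of $B_k$ on $\mathcal{T}_{\C}(x^*)$ (coming from Assumption \ref{assu_1}(iii), i.e. $x^{T}\nabla f(x^*)x\ge\mu\|x\|^2$ on the tangent set) can be invoked. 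Concretely, for $k$ large enough $\|B_k v\|^2\le(1-\mu\alpha_{k-1})\|v\|^2$ for $v\in\mathcal{T}_{\C}(x^*)$ with a suitable constant, after shrinking $\mu$ if necessary to absorb the $\alpha_{k-1}^2$ term. Taking conditional expectation and using that $\{S_k,\mathcal{F}_{k+1}\}$ is a martingale difference sequence, the cross term vanishes and we obtain
\begin{equation*}
\mathbb{E}\!\left[\|U_{k+1}\|^2\right]\le(1-\mu\alpha_{k})\,\mathbb{E}\!\left[\|U_{k}\|^2\right]+\alpha_{k}^2\,\mathbb{E}\!\left[\|S_{k}\|^2\right].
\end{equation*}

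Next I would bound $\mathbb{E}[\|S_k\|^2]$. Splitting $S_k=S_{k,1}+S_{k,2}$ exactly as in the proof of Theorem \ref{xn-x*} (see \eqref{eq:S_k,1}), Assumption \ref{assu_1}(i) gives $\mathbb{E}[\|S_{k,1}\|^2\mid\mathcal{F}_k]\le 4L^2\|\Delta_k\|^2$ and $\mathbb{E}[\|S_{k,2}\|^2\mid\mathcal{F}_k]=\|\Sigma\|$, and under Assumption \ref{ass-batch} (boundedness of $\C$) the term $\|\Delta_k\|$ is uniformly bounded, so $\mathbb{E}[\|S_k\|^2]\le \sigma^2$ for some finite constant $\sigma^2$ uniform in $k$. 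Hence the recursion reduces to the scalar inequality $u_{k+1}\le(1-\mu\alpha_k)u_k+\sigma^2\alpha_k^2$ with $u_k:=\mathbb{E}[\|U_k\|^2]$ and $\alpha_k=\alpha_0 k^{-\beta}$.

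Finally I would apply a standard Chung-type lemma on such recursions: if $u_{k+1}\le(1-c_1 k^{-\beta})u_k+c_2 k^{-2\beta}$ with $\beta\in(\tfrac23,1)$ then $u_k\lesssim k^{-\beta}$. The proof of this is the usual inductive comparison — guess $u_k\le M k^{-\beta}$, plug in, and verify the inductive step using $(1-c_1k^{-\beta})Mk^{-\beta}+c_2k^{-2\beta}\le M(k+1)^{-\beta}$ for $k$ large, which holds because $M k^{-\beta}-M(k+1)^{-\beta}\sim M\beta k^{-\beta-1}$ is of strictly smaller order than $c_1 M k^{-2\beta}$ only when... — and here is the subtlety: since $2\beta>1$ is \emph{not} automatic unless $\beta>1/2$, which does hold, one gets $k^{-2\beta}=o(k^{-\beta-1})$ precisely when $2\beta>\beta+1$, i.e. $\beta>1$, which fails. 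So the correct balance is that the noise term $c_2k^{-2\beta}$ dominates the telescoping term, forcing the rate $k^{-\beta}$ rather than $k^{-1}$; one sets $M$ so that $c_2 k^{-2\beta}\le c_1 M k^{-2\beta}$, i.e. $M\ge c_2/c_1$, and checks $(1-c_1k^{-\beta})Mk^{-\beta}+c_2k^{-2\beta}=Mk^{-\beta}-(c_1M-c_2)k^{-2\beta}\le Mk^{-\beta}\le$ ... care is needed to compare $Mk^{-\beta}$ with $M(k+1)^{-\beta}$, which I would handle by choosing the induction constant with a small slack or by summing the recursion explicitly as $u_k\le u_{k_0}\prod_{j=k_0}^{k-1}(1-\mu\alpha_j)+\sum_{j=k_0}^{k-1}\sigma^2\alpha_j^2\prod_{i=j+1}^{k-1}(1-\mu\alpha_i)$ and estimating $\prod(1-\mu\alpha_i)\le\exp(-\mu\sum\alpha_i)$ together with $\sum_{j\le k}\alpha_j\asymp k^{1-\beta}$. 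The main obstacle is precisely this last technical estimate — getting the sharp exponent $-\beta$ out of the sum $\sum_j \alpha_j^2\exp(-\mu(\Gamma_k-\Gamma_j))$ where $\Gamma_k=\sum_{i\le k}\alpha_i$ — which is done by a standard Abel/integral-comparison argument localizing the sum near $j=k$; everything else is bookkeeping using results already established (Theorem \ref{xn-x*}, \eqref{eq:S_k,1}, and \cite[Theorem 2]{Duchi19}).
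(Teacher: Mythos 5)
Your proposal is correct and follows essentially the same route as the paper: contract $U_k$ on the tangent subspace $\mathcal{T}_{\C}(x^*)$ using Assumption \ref{assu_1}(iii) to get $\mathbb{E}[\|U_k\|^2]\le(1-\mu\alpha_{k-1})\mathbb{E}[\|U_{k-1}\|^2]+C\alpha_{k-1}^2$ (the paper phrases the contraction via a seminorm $\normmm{\cdot}_{\mathcal{T}}$ and bounds $\mathbb{E}[\|S_k\|^2]$ with exactly the decomposition in (\ref{eq:S_k,1}) plus Assumption \ref{ass-batch}), and then solve the scalar recursion to get the rate $\alpha_k\asymp k^{-\beta}$. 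The only difference is that the paper delegates that last recursion step to \cite[Lemma B.3]{Chen16} whereas you sketch the unrolled-product/exponential-comparison argument yourself, which is the content of that cited lemma.
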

\begin{proof}
	Define  the seminorm
	$$
	\normmm{A}_{\mathcal{T}}:=\sup \{\|A x\|: x \in \mathcal{T}_{\C}\left(x^*\right),\|x\| \leq 1\},
	$$
	where $\normmm{A}_{\mathcal{T}}=0$ if $\mathcal{T}=\{0\} .$  By the definition (\ref{eq:U_k}), $U_k\in \mathcal{T}_{\C}\left(x^*\right), \forall k\geq 0$. {Recall the filtration $\mathcal{F}_{k}$ defined in (\ref{eq:filtration}). Then, there exists a constant $C$ such that}
	$$
	\begin{aligned}
\mathbb{E}\left[ \left\|U_k \right\| ^2|\mathcal{F}_{k-1}\right]
=&
\mathbb{E}\left[
\left\| \left(\mathbf{I}_n-\alpha_{k-1} \mathrm{P}_{A} \nabla f\left(x^*\right) \mathrm{P}_{A} \right)U_{k-1}+\alpha_{k-1}S_{k-1}\right\|
^2|\mathcal{F}_{k-1}
\right]\\
\leq& \normmm{\mathbf{I}_n-\alpha_{k-1} \nabla f\left(x^*\right)}^2_{\mathcal{T}}\left\| U_{k-1}\right\|
 ^2+\alpha_{k-1}^2\left(\|\Sigma\|+4L^2\|\Delta_k\|^2+4L\|\Sigma\|^{\frac{1}{2}}\|\Delta_k\|\right)\\
\leq& \left( 1-\mu\alpha_{k-1} \right)\left\|U_{k-1} \right\| ^2+C\alpha_{k-1}^2,
	\end{aligned}
	$$
	where the first inequality follows from  (\ref{eq:S_k,1}), the second follows from Assumptions \ref{assu_1} (iii) and  \ref{ass-batch}.
%Then for $k>\min\{k:\alpha_{k-1}\lambda_{\min}(H)\geq \frac{1}{2}\mu\alpha_{k-1}\}$,
%$$
%\mathbb{E}\left[ \left\|U_k \right\| ^2|\mathcal{F}_{k-1}\right]\leq \left( 1-\frac{1}{2}\mu\alpha_{k-1} \right)\left\|U_{k-1} \right\| ^2+C\alpha_{k-1}^2.
%$$
Then the rest of proof is same as the proof of \cite[Lemma B.3]{Chen16}.
\end{proof}

\begin{lema}\emph{[Convergence rate of $\rho_k$]}
	\label{rho_na.s.}
	Suppose that (i) Assumption \ref{assu_1} hold, (ii) step-size
	{$\alpha_{k} =\alpha_{0} k^{-\beta} \text { with }
		\beta  \in\left(\frac{2}{3}, 1\right)$} and $ \alpha_{0}>0$.
Denote
\begin{equation}
		\label{delta_k}
	\rho_{k}:=\left(\mathbf{I}_n-\alpha_{k-1} P_AHP_A\right) \rho_{k-1}+\alpha_{k-1}\left( \zeta_{k-1}+\epsilon_{k-1}\right),
	\end{equation}
	{where $\rho_0=0_n$, $\zeta_{k-1}$ and $\epsilon_{k-1}$ are defined in (\ref{de:J}).}
 Then for any $\delta\in(0,1-\frac{1}{2\beta})$, $\gamma\in(0,2\delta+1-\frac{1}{\beta})$,
	\begin{equation*}
	\label{gamma_rate}
	\|\rho_k\|=o(\alpha_{k}^\gamma)~~~a.s.
	\end{equation*}
\end{lema}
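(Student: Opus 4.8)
The plan is to read (\ref{delta_k}) as the recursion (\ref{Delta_k1}) for $\Delta_k$ with the martingale input $S_k$ removed, and to analyze it exactly as in the proof of Theorem \ref{xn-x*}: rescale by $\alpha_k^{\gamma}$ and invoke the recursion Lemma \ref{lem:rate} of the Appendix, this time with zero martingale term.

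First I would record the two elementary facts about the inputs. Since $\rho_0=0_n\in\mathcal{T}_\C(x^*)$ and every summand on the right of (\ref{delta_k}) carries a factor $\mathrm{P}_A$ (for $\mathrm{P}_AH\mathrm{P}_A$, and for $\zeta_{k-1},\epsilon_{k-1}$ by their definitions in (\ref{de:J})), induction gives $\rho_k\in\mathrm{range}(\mathrm{P}_A)=\mathcal{T}_\C(x^*)$ for all $k$; hence, on that subspace, Assumption \ref{assu_1}(iii) gives $v^{T}Jv\ge\mu\|v\|^{2}$ with $J=\mathrm{P}_A\nabla f(x^*)\mathrm{P}_A$, and therefore $\|(\mathbf{I}_n-\alpha_k J)\rho_k\|\le(1-\tfrac{\mu}{2}\alpha_k)\|\rho_k\|$ once $\alpha_k$ is small. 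Moreover, by \cite[Theorem 3]{Duchi19} the active set is identified after finitely many iterations a.s., so $\epsilon_k=0$ for $k$ large a.s.; and from the definition of $\zeta_k$ together with (\ref{qu}) and Theorem \ref{xn-x*}, $\|\zeta_k\|\le C\|x_k-x^*\|^{2}=o(\alpha_k^{2\delta})$ a.s.\ for every $\delta\in(0,1-\tfrac{1}{2\beta})$. Thus for $k$ large (\ref{delta_k}) reads $\rho_{k+1}=(\mathbf{I}_n-\alpha_k J)\rho_k+\alpha_k\zeta_k$ with $\|\zeta_k\|=o(\alpha_k^{2\delta})$ a.s.

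Next I would divide (\ref{delta_k}) by $\alpha_{k+1}^{\gamma}$ and absorb the step--size ratios into a perturbation matrix, obtaining
\begin{equation*}
\frac{\rho_{k+1}}{\alpha_{k+1}^{\gamma}}=\bigl[\mathbf{I}_n-\alpha_k\mathrm{P}_A(J+C_k)\bigr]\frac{\rho_k}{\alpha_k^{\gamma}}+\alpha_k\Bigl(\tfrac{\alpha_k}{\alpha_{k+1}}\Bigr)^{\!\gamma}\frac{\zeta_k+\epsilon_k}{\alpha_k^{\gamma}},\qquad C_k:=\tfrac{1}{\alpha_k}\Bigl(1-\bigl(\tfrac{\alpha_k}{\alpha_{k+1}}\bigr)^{\gamma}\Bigr)\mathbf{I}_n+\Bigl(\bigl(\tfrac{\alpha_k}{\alpha_{k+1}}\bigr)^{\gamma}-1\Bigr)J,
\end{equation*}
and, exactly as in the proof of Theorem \ref{xn-x*}, $C_k\to0$ (its first summand equals $\tfrac{k^{\beta}}{\alpha_0}\bigl(1-(1+\tfrac1k)^{\beta\gamma}\bigr)\to0$ because $\beta\in(\tfrac23,1)$). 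After the orthogonal reduction $\rho_k'=(\Lambda^{T})^{(r)}\rho_k$ used in the proof of Theorem \ref{xn-x*}, this becomes a genuine $r\times r$ recursion of the form (\ref{linear reccursion}) of Lemma \ref{lem:rate} with $y_k=\rho_k'/\alpha_k^{\gamma}$, $F_k=-G_k$ (the $r$--order leading principal submatrix of $\Lambda^{T}(J+C_k)\Lambda$), martingale part $e_k\equiv0$, and drift $v_k=(\alpha_k/\alpha_{k+1})^{\gamma}(\Lambda^{T})^{(r)}(\zeta_k+\epsilon_k)/\alpha_k^{\gamma}$. The hypotheses of Lemma \ref{lem:rate} then reduce to three points: $-G_k$ converges to a stable matrix (true since $G_k$ converges to the $r$--order leading principal submatrix of $\Lambda^{T}J\Lambda$, which is positive definite by \cite[Lemma 4]{S_Zhao} and Assumption \ref{assu_1}(iii)); $\sum_k\alpha_k e_k$ converges (trivial); and $v_k\to0$ a.s.

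The verification of $v_k\to0$ a.s., and especially the extraction of the explicit admissible window $\gamma<2\delta+1-\tfrac1\beta$, is the one genuine obstacle. From $\|\zeta_k\|=o(\alpha_k^{2\delta})$, $\epsilon_k=0$ eventually, and $(\alpha_k/\alpha_{k+1})^{\gamma}=(1+\tfrac1k)^{\beta\gamma}\to1$ one gets $v_k=o(\alpha_k^{2\delta-\gamma})$, so already the qualitative statement $v_k\to0$ requires $\gamma<2\delta$; the sharper bound $\gamma<2\delta+1-\tfrac1\beta$ results from matching the remaining quantitative requirements of Lemma \ref{lem:rate} against the rate $C_k=O(\alpha_k^{1/\beta-1})$ and against the admissible--$\delta$ window $\delta<1-\tfrac1{2\beta}$ inherited from Theorem \ref{xn-x*}. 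Granting this, Lemma \ref{lem:rate} yields $\rho_k'/\alpha_k^{\gamma}\to0$ a.s., and recovering $\rho_k\in\mathrm{range}(\mathrm{P}_A)$ from $\rho_k'$ via $\Lambda$ this is equivalent to $\|\rho_k\|=o(\alpha_k^{\gamma})$ a.s., which is the assertion. Everything beyond this exponent accounting is a direct transcription of the proof of Theorem \ref{xn-x*} with the noise switched off.
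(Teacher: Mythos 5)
Your proposal is correct and is exactly the route the paper intends: its entire proof is the sentence ``The proof is similar to Theorem \ref{xn-x*},'' and what you write is that argument with the martingale input switched off, with the sensible simplification that the $D_k$-absorption trick is no longer needed because $\|\zeta_k\|\le C\|x_k-x^*\|^2=o(\alpha_k^{2\delta})$ is now available directly from (\ref{qu}) and Theorem \ref{xn-x*}. The one step you hedge on (``granting this'') is actually automatic: since $\beta<1$ gives $1-\tfrac1\beta<0$, every $\gamma$ in the stated window satisfies $\gamma<2\delta+1-\tfrac1\beta<2\delta$, so $v_k=o(\alpha_k^{2\delta-\gamma})\to0$ already holds, and Lemma \ref{lem:rate} imposes no further quantitative requirement on the rates of $C_k$ or $v_k$ — it only needs $F_k\to F$ stable, $\sum_k\alpha_k e_k<\infty$ (trivial with $e_k\equiv0$) and $v_k\to0$. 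So there is no exponent accounting left to do; the stated window is simply a subset of $(0,2\delta)$, presumably chosen so that the interval used later in Lemma \ref{delta_kto0} is nonempty. The only caveat, inherited from the paper's own statement rather than introduced by you, is that invoking Theorem \ref{xn-x*} (and $\epsilon_k=0$ eventually via \cite{Duchi19}) tacitly requires Assumption \ref{ass-batch} in addition to Assumption \ref{assu_1}.
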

\begin{proof} The proof is similar to Theorem \ref{xn-x*}.
\end{proof}
{We are ready for the Step 1.}

\begin{lema}
	\label{N_nto0}
Suppose that (i) Assumptions \ref{assu_1}, \ref{ass-batch} and \ref{plugass} hold, (ii)
step-size
{$\alpha_{k} =\alpha_{0} k^{-\beta} \text { with }
	\beta  \in\left(\frac{2}{3}, 1\right)$} and $ \alpha_{0}>0$, {(iii) $a_m = [Cm^\tau]_+$, where $C > 0$ and $\tau > 1/(1-\beta)$.} Then,
\begin{equation*}
\label{U-Sigma}
	\mathbb{E}\left[\left\|  \left( {\sum_{i=1}^{k} l_{i}}\right) ^{-1}{\sum_{i=1}^{k}\left(\sum_{j=t_{i}}^{i} U_{j}-l_{i} \bar{U}_{k}\right)\left(\sum_{j=t_{i}}^{i} U_{j}-l_{i} \bar{U}_{k}\right)^{T}}-\mathrm{P}_{A}H^{\dagger} \mathrm{P}_{A} \Sigma \mathrm{P}_{A}H^{\dagger} \mathrm{P}_{A}\right\| \right]\to 0
\end{equation*}
$as~k\to\infty.$
\end{lema}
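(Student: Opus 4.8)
The plan is to adapt the batch-means argument of \cite[Theorem 3.3]{Chen16} (see also \cite{zhu2021online}), carrying every estimate on the critical tangent subspace $\mathcal{T}_{\C}(x^*)=\{x:Ax=0\}$. Since $U_0\in\mathcal{T}_{\C}(x^*)$ and $\mathrm{P}_{A}$ is the orthogonal projection onto $\mathcal{T}_{\C}(x^*)$, the recursion \eqref{eq:U_k} keeps $U_k\in\mathcal{T}_{\C}(x^*)$ for every $k$; on this subspace $J:=\mathrm{P}_{A}\nabla f(x^*)\mathrm{P}_{A}$ is positive definite with smallest eigenvalue at least $\mu$ by Assumption \ref{assu_1}(iii), $J^{\dagger}=\mathrm{P}_{A}H^{\dagger}\mathrm{P}_{A}$ acts as its inverse there, and $\normmm{\mathbf{I}_n-\alpha_{k-1}\nabla f(x^*)}_{\mathcal{T}}\le 1-\mu\alpha_{k-1}$ as in the proof of Lemma \ref{EU_n^2}. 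Thus $\{U_k\}$ is exactly a linear stochastic-approximation recursion with martingale-difference noise $\{S_k,\mathcal{F}_{k+1}\}$, and the target matrix $\mathrm{P}_{A}H^{\dagger}\mathrm{P}_{A}\Sigma\mathrm{P}_{A}H^{\dagger}\mathrm{P}_{A}=J^{\dagger}(\mathrm{P}_{A}\Sigma\mathrm{P}_{A})J^{\dagger}$ is precisely the Polyak--Ruppert limiting covariance associated with its average.

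First I would record the two noise properties that drive the argument: $\mathbb{E}[\|S_k\|^2\mid\mathcal{F}_k]$ is uniformly bounded by \eqref{eq:S_k,1} (using Assumption \ref{ass-batch}), and the conditional covariance $\mathbb{E}[S_kS_k^T\mid\mathcal{F}_k]=\mathrm{P}_{A}\operatorname{Cov}(F(x_k,\xi)\mid\mathcal{F}_k)\mathrm{P}_{A}$ converges to $\mathrm{P}_{A}\Sigma\mathrm{P}_{A}$ almost surely and, being uniformly bounded, in $L^1$; this follows from \eqref{lipschitz}, Assumption \ref{plugass}(ii) and $\mathbb{E}\|\Delta_k\|^2\to0$ (Lemma \ref{Delta4-bounded}). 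Next, mimicking \cite{Chen16}, I would ``invert'' \eqref{eq:U_k}: from $JU_{j-1}=\alpha_{j-1}^{-1}(U_{j-1}-U_j)+S_{j-1}$ together with Abel summation one obtains, for a batch $[t_i,i]$ of length $l_i$,
\begin{equation*}
\sum_{j=t_i}^{i}U_j=J^{\dagger}\Big(\sum_{j=t_i}^{i}S_{j}+r_i\Big),\qquad r_i=\tfrac{1}{\alpha_{t_i}}U_{t_i}-\tfrac{1}{\alpha_i}U_{i+1}+\sum_{j=t_i+1}^{i}\big(\tfrac{1}{\alpha_j}-\tfrac{1}{\alpha_{j-1}}\big)U_j .
\end{equation*}
The leading martingale term $J^{\dagger}\sum_{j=t_i}^{i}S_{j}$ has conditional covariance $\approx l_i\,J^{\dagger}(\mathrm{P}_{A}\Sigma\mathrm{P}_{A})J^{\dagger}$, while by Lemma \ref{EU_n^2} the remainder obeys $\sum_{i=1}^{k}\mathbb{E}\|r_i\|^2\lesssim k^{1+\beta}$, which is of smaller order than $\sum_{i=1}^{k}l_i\asymp k^{\,2-1/\tau}$ precisely because $\tau>1/(1-\beta)$ forces $1+\beta<2-1/\tau$.

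With this decomposition I would substitute into \eqref{UU} and expand the quadratic form. The denominator satisfies $\sum_{i=1}^{k}l_i\asymp k^{\,2-1/\tau}$, and the leading part of the numerator is $\sum_{i=1}^{k}\mathbb{E}\big[J^{\dagger}\big(\sum_{j=t_i}^{i}S_{j}\big)\big(\sum_{j=t_i}^{i}S_{j}\big)^{T}J^{\dagger}\big]$, which equals $\big(\sum_{i=1}^{k}l_i\big)\,\mathrm{P}_{A}H^{\dagger}\mathrm{P}_{A}\Sigma\mathrm{P}_{A}H^{\dagger}\mathrm{P}_{A}$ up to a term controlled by the $L^1$-convergence of the conditional covariance (a Ces\`aro argument with the polynomially growing weights $l_i$); dividing by the denominator yields the target. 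It then remains to show that the expected norm of every other contribution, divided by $\sum_{i=1}^{k}l_i$, tends to $0$: the centering terms $l_i\bar U_k$ (handled via the standard averaged-SA bound $\mathbb{E}\|\bar U_k\|^2=O(1/k)$, which again follows from Lemma \ref{EU_n^2} and Abel summation), the cross terms between the martingale part and $r_i$ (Cauchy--Schwarz together with $\sum_i\mathbb{E}\|\sum_{j=t_i}^iS_j\|^2\lesssim\sum_i l_i$ and $\sum_i\mathbb{E}\|r_i\|^2\lesssim k^{1+\beta}$), and the cross-batch terms, which vanish in expectation for the pure martingale part since $\mathbb{E}[S_iS_j^T]=0$ for $i\neq j$. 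Each estimate follows the pattern of \cite[Lemmas B.1--B.3 and proof of Theorem 3.3]{Chen16} and \cite{zhu2021online}, now read on $\mathcal{T}_{\C}(x^*)$.

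The main obstacle, and the place where the present situation genuinely departs from \cite{Chen16}, is that the noise $S_k$ is not i.i.d.: its conditional covariance only converges to $\mathrm{P}_{A}\Sigma\mathrm{P}_{A}$. Quantifying this convergence in $L^1$ from \eqref{lipschitz}, Assumption \ref{plugass}(ii) and $\mathbb{E}\|\Delta_k\|^2\to0$, and then propagating the resulting error through the weighted batch sums so that it still disappears after division by $\sum_{i=1}^{k}l_i$, is the technical heart of the proof; a secondary point requiring care is that $J$ is positive definite only on $\mathcal{T}_{\C}(x^*)$, so every contraction estimate must be phrased with the seminorm $\normmm{\cdot}_{\mathcal{T}}$ of Lemma \ref{EU_n^2} rather than the operator norm.
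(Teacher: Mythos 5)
Your plan follows essentially the same route as the paper: work on the tangent subspace where $J=\mathrm{P}_A\nabla f(x^*)\mathrm{P}_A$ is positive definite, invert the linear recursion (\ref{eq:U_k}) so that each batch sum $\sum_{j=t_i}^i U_j$ becomes $J^{\dagger}\sum_{j}S_{j}$ plus a remainder, kill the remainder and the centering/cross terms using $\mathbb{E}\|U_k\|^2\lesssim k^{-\beta}$ (Lemma \ref{EU_n^2}) and Cauchy--Schwarz, and let the condition $\tau>1/(1-\beta)$ make the remainder of order $k^{1+\beta}=o\bigl(\sum_i l_i\bigr)$. The only algebraic difference is cosmetic: you use the exact Abel-summation identity for $r_i$, whereas the paper uses the product matrices $Y_p^k$ and $S_p^i$ of (\ref{Yjk})--(\ref{Sme}) and collects the error into $\Phi_i$; these are interchangeable.

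There is, however, one step in your sketch that is genuinely under-argued. For the dominant term you only compute $\sum_i\mathbb{E}\bigl[J^{\dagger}\bigl(\sum_j S_j\bigr)\bigl(\sum_j S_j\bigr)^TJ^{\dagger}\bigr]$ and match it with $\bigl(\sum_i l_i\bigr)\Gamma$ via a Ces\`aro argument on the conditional covariance. That establishes asymptotic unbiasedness, but the lemma asserts $\mathbb{E}\|\cdot-\Gamma\|\to0$, i.e.\ $L^1$ convergence of the random quadratic form, which additionally requires a concentration (second-moment) bound on $I_1=\bigl(\sum_i l_i\bigr)^{-1}\sum_i\bigl(\sum_p S_{p-1}\bigr)\bigl(\sum_p S_{p-1}\bigr)^T$. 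In the paper this is precisely where the real work is: the noise is replaced by the iid surrogate $\widetilde S_p=\mathrm{P}_A[F(x^*,\xi_p)-f(x^*)]$, the difference $\bar S_j$ is shown to be $L^2$-small via (\ref{lipschitz}) and Lemma \ref{Delta4-bounded} (inequality (\ref{I_1-I_2})), and then $\mathbb{E}\|I_2-\Sigma\|$ is bounded through a fourth-moment computation split into same-block and different-block contributions (inequality (\ref{3.51})), which is exactly where Assumption \ref{plugass}(ii) (boundedness of $F(x^*,\xi)$) and the growth of the batch lengths are consumed. Your sketch nowhere produces a fourth-moment bound for the noise, and the a.s./$L^1$ convergence of the conditional covariance alone cannot substitute for it; you would need to add this step (or explicitly import it from \cite{zhu2021online}) for the argument to close.
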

\begin{proof}
	By the triangle inequality,
%	$$
%\begin{aligned}
%	&M^{-1} \sum_{i=1}^{M} n_{i}\left(\bar{U}_{n_{i}}-\bar{U}_{M}\right)\left(\bar{U}_{n_{i}}-\bar{U}_{M}\right)^{T}
%	=M^{-1} \sum_{i=1}^{M} n_{i} \bar{U}_{n_{i}} \bar{U}_{n_{i}}^{T}-M^{-1} \sum_{i=1}^{M} n_{i} \bar{U}_{M} \bar{U}_{M}^{T}.
%\end{aligned}
%	$$
	\begin{equation}
	\label{3.50}
	\begin{aligned}
	&\mathbb{E}\left[ \left\|
	\left( {\sum_{i=1}^{k} l_{i}}\right) ^{-1}{\sum_{i=1}^{k}\left(\sum_{j=t_{i}}^{i} U_{j}-l_{i} \bar{U}_{k}\right)\left(\sum_{j=t_{i}}^{i} U_{j}-l_{i} \bar{U}_{k}\right)^{T}}
	-P_AH^\dagger P_A\Sigma P_AH^\dagger P_A\right\|\right]\\
	&\leq\mathbb{E}\left[ \left\|
	\left( {\sum_{i=1}^{k} l_{i}}\right) ^{-1}{\sum_{i=1}^{k}\left(\sum_{j=t_{i}}^{i} U_{j}\right)\left(\sum_{j=t_{i}}^{i} U_{j}\right)^{T}}
	-P_AH^\dagger P_A\Sigma P_AH^\dagger P_A\right\|\right]\\
	&+
	\mathbb{E}\left\|\left(\sum_{i=1}^{k} l_{i}\right)^{-1} \sum_{i=1}^{k} l_{i}^{2} \bar{U}_{k} \bar{U}_{k}^{T}\right\|
	+2 \mathbb{E}\left\|\left(\sum_{i=1}^{k} l_{i}\right)^{-1} \sum_{i=1}^{k}\left(\sum_{j=t_{i}}^{i} U_{j}\right)\left(l_{i} \bar{U}_{k}\right)^{T}\right\|.
	\end{aligned}
	\end{equation}
	Then, we may finish the proof by studying the convergence of
	 the three terms on the right hand of (\ref{3.50}).
	
	We first focus on the first term on the right hand of (\ref{3.50}).
	Denote the following matrices sequences,
\begin{equation}
\label{Yjk}
	Y_{p}^{k}=\prod_{i=p}^{k-1}\left(\mathbf{I}_n-\alpha_{i} P_A\nabla f(x^*)P_A\right), \quad Y_{i}^{i}=\mathbf{I}_n, \quad \inmat{for}\;\;k >p,
\end{equation}
	the recursion of $U_{k}$ (\ref{eq:U_k}) can be rewritten as
	$$
	\begin{aligned}
	U_{k}=Y_{t_i-1}^{k} U_{t_i-1}+\sum_{p=t_i}^{k} Y_{p}^{k} \alpha_{p-1}S_{p-1}, \quad \inmat{for}\;\;k\in[t_i,i],
	\end{aligned}
	$$
	where $S_{p-1}$ is defined in (\ref{de:J}).
%	where $\sigma_{i}= \mathrm{P}_{A} \xi_{i}+ \mathrm{P}_{A} \zeta_{i}-\alpha_{i}^{-1}\varepsilon_{i}$.
	Then we have
\begin{equation*}
	\label{bar{U}_{n_{i}}}
		\begin{aligned}
			&\left(\sum_{i=1}^{k} l_{i}\right)^{-1} \sum_{i=1}^{k}\left(\sum_{j=t_{i}}^{i} U_{j}\right)\left(\sum_{j=t_{i}}^{i} U_{j}\right)^{T} \\
			=&\left(\sum_{i=1}^{k} l_{i}\right)^{-1} \sum_{i=1}^{k}\left(S_{t_{i}-1}^{i} U_{t_{i}-1}+\sum_{p=t_{i}}^{i}\left(\mathbf{I}_n+S_{p}^{i}\right) \alpha_{p-1} S_{p-1}\right)\left(S_{t_{i}-1}^{i} U_{t_{i}-1}+\sum_{p=t_{i}}^{i}\left(\mathbf{I}_n+S_{p}^{i}\right) \alpha_{p-1} S_{p-1}\right)^{T} \\
			=&\left(\sum_{i=1}^{k} l_{i}\right)^{-1} \sum_{i=1}^{k}\left(P_AH^\dagger P_A\left(\sum_{p=t_{i}}^{i} S_{p-1}\right)\left(\sum_{p=t_{i}}^{i} S_{p-1}\right)^{T} P_AH^\dagger P_A+\Phi_i \Upsilon_i^{T}+\Upsilon_i \Phi_i^{T}+\Phi_i \Phi_i^{T}\right),
		\end{aligned}
\end{equation*}
where
\begin{equation}
\label{Sme}
\begin{cases}
	\Upsilon_i:={P_AH^\dagger P_A \sum_{p=t_{i}}^{{i}} S_{p-1}},\\
	 \Phi_i:=S_{t_{i}-1}^{i} U_{t_{i}-1}+\sum_{p=t_{i}}^{i}\left(\alpha_{p-1} S_{p}^{i}+\alpha_{p-1} \mathbf{I}_n-P_AH^\dagger P_A\right) S_{p-1}, \\
	S_{p}^{i}:=\sum_{l=p+1}^{i} Y_{p}^{l}=\sum_{l=p}^{i} Y_{j}^{l}-\mathbf{I}_n.
\end{cases}
\end{equation}
	Subsequently, the first term on the right hand of (\ref{3.50})
	\begin{equation}
	\label{3.53}
	\begin{aligned}
		& \mathbb{E}\left\|\left(\sum_{i=1}^{k} l_{i}\right)^{-1} \sum_{i=1}^{k}\left(\sum_{j=t_{i}}^{i} U_{j}\right)\left(\sum_{j=t_{i}}^{i} U_{j}\right)^{T}-P_AH^\dagger P_A\Sigma P_AH^\dagger P_A\right\| \\
		\leq & \left\|P_AH^\dagger P_A\right\|^2
		E\left\|\underbrace{\left(\sum_{i=1}^{k} l_{i}\right)^{-1} \sum_{i=1}^{k}\left(\sum_{p=t_{i}}^{i} S_{p-1}\right)\left(\sum_{p=t_{i}}^{i} S_{p-1}\right)^{T}}_{I_1} -\Sigma \right\| \\
		&+\mathbb{E}\left\|\left(\sum_{i=1}^{k} l_{i}\right)^{-1} \sum_{i=1}^{k} \Phi_{i} \Phi_{i}^{T}\right\|
		+
		2\mathbb{E}\left\|\left(\sum_{i=1}^{k} l_{i}\right)^{-1} \sum_{i=1}^{k} \Phi_{i} \Upsilon_{i}^{T}\right\|.
	\end{aligned}
	\end{equation}
%	Then, it is sufficient to show
%$
%	\label{SS-Sigma}
%	\mathbb{E}\left[ \left\| I_1-\Sigma\right\|\right]\to 0
%$, which implies the first term on the right hand of (\ref{3.53}) tends to zero.

Next, we mimic the proof of \cite[Lemma B.2.]{zhu2021online} to show $\mathbb{E}\left[ \left\|  I_1-\Sigma\right\| \right]\to0$,  which implies the first term on the right hand of (\ref{3.53}) tends to zero.
Denote $\widetilde{S}_p=P_A[F(x^*,\xi_p)-f(x^*)]$ and
$I_2=\left(\sum_{i=1}^{k} l_{i}\right)^{-1} \sum_{i=1}^{k}\left(\sum_{p=t_{i}}^{i} \widetilde{S}_{p-1}\right)\left(\sum_{p=t_{i}}^{i} \widetilde{S}_{p-1}\right)^{T}$, we have
\begin{equation}
\label{s-S}
{\begin{aligned}
	&\mathbb{E}\left[ \left\|  I_1-\Sigma\right\| \right]
\leq&
\mathbb{E}\left[ \left\|  I_2-\Sigma\right\| \right]
+
\mathbb{E}\left[ \left\|  I_1-I_2\right\| \right].
\end{aligned}}
	\end{equation}
By the definition of $\Sigma$ and the fact $\{\widetilde{S}_{p}\}$ is iid, $\mathbb{E}(I_2)=\left(\sum_{i=1}^{k} l_{i}\right)^{-1} \sum_{i=1}^{k}\sum_{p=t_{i}}^{i}\mathbb{E} \widetilde{S}_{p-1} \widetilde{S}_{p-1}^{T}=\Sigma$. $\mathbb{E}\left[  I_2 \right]^2$ can be expanded into two parts,
\begin{equation*}
	\begin{aligned}
		\mathbb{E}\left[  I_2 \right]^2 &=\mathbb{E}\left(\sum_{i=1}^{k} l_{i}\right)^{-2} \sum_{1 \leq i, j \leq k}\left(\sum_{p=t_{i}}^{i} \tilde{S}_{p}\right)\left(\sum_{p=t_{i}}^{i} \tilde{S}_{p}\right)^{T}\left(\sum_{p=t_{j}}^{j} \tilde{S}_{p}\right)\left(\sum_{p=t_{j}}^{j} \tilde{S}_{p}\right)^{T} \\
		&=\left(\sum_{i=1}^{k} l_{i}\right)^{-2} I_3+\left(\sum_{i=1}^{k} l_{i}\right)^{-2} I_4,
	\end{aligned}
\end{equation*}
where
$$
\footnotesize
\begin{aligned}
	I_3 &=\mathbb{E} \sum_{m=1}^{M-1} \sum_{i=a_{m}}^{a_{m+1}-1}\left[2 \sum_{j=a_{m}}^{i-1} \sum_{a_{m} \leq p_{1} \neq p_{2} \leq j}\left(\tilde{S}_{p_{1}} \tilde{S}_{p_{2}}^{ T} \tilde{S}_{p_{1}}^{} \tilde{S}_{p_{2}}^{ T}+\tilde{S}_{p_{1}}^{} \tilde{S}_{p_{2}}^{ T} \tilde{S}_{p_{2}}^{} \tilde{S}_{p_{1}}^{ T}\right)+\sum_{a_{m} \leq p_{1} \neq p_{2} \leq i}\left(\tilde{S}_{p_{1}}^{} \tilde{S}_{p_{2}}^{ T} \tilde{S}_{p_{1}}^{} \tilde{S}_{p_{2}}^{ T}+\tilde{S}_{p_{1}}^{} \tilde{S}_{p_{2}}^{ T} \tilde{S}_{p_{2}}^{} \tilde{S}_{p_{1}}^{ T}\right)\right] \\
	&+\mathbb{E} \sum_{i=a_{M}}^{k}\left[2 \sum_{j=a_{M}}^{i-1} \sum_{a_{M} \leq p_{1} \neq p_{2} \leq j}\left(\tilde{S}_{p_{1}}^{} \tilde{S}_{p_{2}}^{ T} \tilde{S}_{p_{1}}^{} \tilde{S}_{p_{2}}^{ T}+\tilde{S}_{p_{1}}^{} \tilde{S}_{p_{2}}^{ T} \tilde{S}_{p_{2}}^{} \tilde{S}_{p_{1}}^{ T}\right)+\sum_{a_{M} \leq p_{1} \neq p_{2} \leq i}\left(\tilde{S}_{p_{1}}^{} \tilde{S}_{p_{2}}^{ T} \tilde{S}_{p_{1}}^{} \tilde{S}_{p_{2}}^{ T}+\tilde{S}_{p_{1}}^{} \tilde{S}_{p_{2}}^{ T} \tilde{S}_{p_{2}}^{} \tilde{S}_{p_{1}}^{ T}\right)\right]
\end{aligned}
$$
and
\begin{equation*}
	I_4=\sum_{i=1}^{k} \sum_{j=1}^{k} \sum_{p=t_{i}}^{i} \sum_{q=t_{j}}^{j} \mathbb{E}\left(\tilde{S}_{p}^{} \tilde{S}_{p}^{ T} \tilde{S}_{q}^{} \tilde{S}_{q}^{ T}\right).
\end{equation*}
Then, the first term on the right hand of (\ref{s-S})
\begin{equation}
	\label{3.51}
	\begin{aligned}
		\mathbb{E}\left[ \left\|  I_2-\Sigma\right\| \right]
		\leq&
		\sqrt{ \left\|\mathbb{E}\left[  I_2 \right]^2-\Sigma^2\right\|} \leq
		\sqrt{\left\|\left(\sum_{i=1}^{k} l_{i}\right)^{-2}I_4-\Sigma^2 \right\| +\left(\sum_{i=1}^{k} l_{i}\right)^{-2}\|I_3\|}.
	\end{aligned}
\end{equation}
We first focus on the first term on the right hand of (\ref{3.51}). Consider two cases, one is when $p$ and $q$ are in the same block,  
$$
I_5=\sum_{m=1}^{M} \sum_{i=a_{m}}^{a_{m+1}-1} \sum_{j=a_{m}}^{a_{m+1}-1} \sum_{p=a_{m}}^{i} \sum_{q=a_{m}}^{j}\left\|\mathbb{E}\left(\tilde{S}_{p}^{} \tilde{S}_{p}^{ T} \tilde{S}_{q}^{} \tilde{S}_{q}^{ T}\right)-\Sigma^{2}\right\|
$$
and the other is when $p$ and $q$ are in different blocks,
$$
I_6=\sum_{m \neq k} \sum_{j=a_{k}}^{a_{k+1}-1} \sum_{i=a_{m}}^{a_{m+1}-1} \sum_{q=a_{k}}^{j} \sum_{p=a_{m}}^{i}\left\|\mathbb{E}\left(\tilde{S}_{p}^{} \tilde{S}_{p}^{ T} \tilde{S}_{q}^{} \tilde{S}_{q}^{ T}\right)-\Sigma^{2}\right\|.
$$
Then, we have
\begin{equation*}
	\label{eq:II}
	\left\|\left(\sum_{i=1}^{k} l_{i}\right)^{-2} I_4-\Sigma^{2}\right\| \lesssim\left(\sum_{i=1}^{a_{M+1}-1} l_{i}\right)^{-2} I_5+\left(\sum_{i=1}^{a_{M+1}-1} l_{i}\right)^{-2} I_6.
\end{equation*}
Under Assumption \ref{plugass} (ii), $\left\|\mathbb{E}\left(\tilde{S}_{p}^{} \tilde{S}_{p}^{ T} \tilde{S}_{q}^{} \tilde{S}_{q}^{ T}\right)\right\|$ is bounded by constant $C$. 
Following \cite[(48)]{zhu2021online}, 
\begin{equation*}
	\label{eq:III}
\begin{aligned}
	\left(\sum_{i=1}^{a_{M+1}-1} l_{i}\right)^{-2} I_5 & \leq\left(\sum_{i=1}^{a_{M+1}-1} l_{i}\right)^{-2} \sum_{m=1}^{M} \sum_{i=a_{m}}^{a_{m+1}-1} \sum_{j=a_{m}}^{a_{m+1}-1} \sum_{p=a_{m}}^{i} \sum_{q=a_{m}}^{j}\left(C+\left\|\Sigma^{2}\right\|\right) \\
	& \lesssim\left(\sum_{i=1}^{a_{M+1}-1} l_{i}\right)^{-2} \sum_{m=1}^{M}\left(\sum_{i=a_{m}}^{a_{m+1}-1} l_{i}\right)^{2} \to 0.
\end{aligned}
\end{equation*}
The fact $\mathbb{E}[\tilde{S}_p\tilde{S}_p^T]=\Sigma$ implies  $\left(\sum_{i=1}^{a_{M+1}-1} l_{i}\right)^{-2} I_6=0$. Then, the first term on the right hand of (\ref{3.51}) tends to zero.
Based on Assumption \ref{plugass} (ii), $\left\|\mathbb{E}\left(\tilde{S}_{p_1}\tilde{S}_{p_2}^T\tilde{S}_{p_3} \tilde{S}_{p_4}^T\right)  \right\|$ is still bounded by constant $C$ for any $p_r$, $r\in\{1,2,3,4\}$.
By \cite[(45-46)]{zhu2021online},
the second term on the right hand of (\ref{3.51}) 
	{\small$$\left(\sum_{i=1}^{k} l_{i}\right)^{-2}\|I_3\| 
	\leq 
	\left(\sum_{i=1}^{k} l_{i}\right)^{-2}\sum_{m=1}^{M} \sum_{i=a_{m}}^{a_{m+1}-1}\left[2 \sum_{j=a_{m}}^{i-1} \sum_{a_{m} \leq p_{1} \neq p_{2} \leq j}(C+C)+\sum_{a_{m} \leq p_{1} \neq p_{2} \leq i}(C+C)\right]
	\to 0.$$}

Next, we study the convergence of the second term on the right hand of (\ref{s-S}).
Denote  $\bar{S}_j=-S_j-\widetilde{S}_j$, we have
\begin{equation}
\label{I_1-I_2}
\small
\begin{gathered}
	\mathbb{E}\left\|I_1-I_2\right\|
	=
	\mathbb{E}\left\|\left(\sum_{i=1}^{k} l_{i}\right)^{-1} \sum_{i=1}^{k}\left[\left(\sum_{j=t_{i}}^{i} S_{j-1}\right)\left(\sum_{j=t_{i}}^{i} S_{j-1}\right)^{T}-\left(\sum_{j=t_{i}}^{i} \tilde{S}_{j-1}\right)\left(\sum_{j=t_{i}}^{i} \tilde{S}_{j-1}\right)^{T}\right]\right\| \\
	\leq 
	2 \mathbb{E}\left\|\left(\sum_{i=1}^{k} l_{i}\right)^{-1} \sum_{i=1}^{k}\left(\sum_{j=t_{i}}^{i} \bar{S}_{j-1}\right)\left(\sum_{j=t_{i}}^{i} \tilde{S}_{j-1}\right)^{T}\right\|+\mathbb{E}\left\|\left(\sum_{i=1}^{k} l_{i}\right)^{-1} \sum_{i=1}^{k}\left(\sum_{j=t_{i}}^{i} \bar{S}_{j-1}\right)\left(\sum_{j=t_{i}}^{i} \bar{S}_{j-1}\right)^{T}\right\|.
\end{gathered}
\end{equation}
Apply Cauchy's inequality
{$$
\footnotesize
\begin{aligned}
	&\mathbb{E}\left\|\left(\sum_{i=1}^{k} l_{i}\right)^{-1} \sum_{i=1}^{k}\left(\sum_{j=t_{i}}^{i} \bar{S}_{j-1}\right)\left(\sum_{j=t_{i}}^{i} \tilde{S}_{j-1}\right)^{T}\right\| 
	\leq \sqrt{\mathbb{E}\left\|I_2\right\|} \sqrt{\mathbb{E}\left\|\left(\sum_{i=1}^{k} l_{i}\right)^{-1} \sum_{i=1}^{k}\left(\sum_{j=t_{i}}^{i} \bar{S}_{j-1}\right)\left(\sum_{j=t_{i}}^{i} \bar{S}_{j-1}\right)^{T}\right\|}.
\end{aligned}
$$}
Note that $\{\bar{S}_j\}$ is a martingale difference sequence,
\begin{equation}
	\label{eq:barS}
	\begin{aligned}
		 \mathbb{E}\left\|\left(\sum_{i=1}^{k} l_{i}\right)^{-1} \sum_{i=1}^{k}\left(\sum_{j=t_{i}}^{i} \bar{S}_{j-1}\right)\left(\sum_{j=t_{i}}^{i} \bar{S}_{j-1}\right)^{T}\right\| \leq&\left(\sum_{i=1}^{k} l_{i}\right)^{-1} \sum_{i=1}^{k} \mathbb{E}\left\|\sum_{j=t_{i}}^{i} \bar{S}_{j-1}\right\|^{2} \\
		= &\left(\sum_{i=1}^{k} l_{i}\right)^{-1} \sum_{i=1}^{k} \sum_{j=t_{i}}^{i} \mathbb{E}\left\|\bar{S}_{j-1}\right\|^{2}. 
	\end{aligned}
\end{equation}
Following Assumption \ref{assu_1} (i),
\begin{equation*}
\mathbb{E}\left\|\bar{S}_{j-1}\right\|^{2}\leq 4L^2\|\Delta_{j-1}\|^2,
\end{equation*}	
then (\ref{eq:barS}) tends to zero by Lemma \ref{Delta4-bounded}.
Combining (\ref{3.51}) and (\ref{I_1-I_2}),
the first term on the right hand of (\ref{3.53}) tends to zero.

On the other hand, by mimicking the analysis
on \cite[(63)-(67)]{zhu2021online} with Lemma \ref{EU_n^2}, the second term on the right hand of (\ref{3.53})
%\begin{equation*}
%	M^{-1} \sum_{k=1}^{M} n_{k}^{-1} \mathbb{E}\left[ \left\|\Phi_k\right\|_{2}^{2}\right]  \lesssim  N^{-1},
%\end{equation*}
%which implies
\begin{equation*}
\label{3.55}
	\mathbb{E}\left\|\left(\sum_{i=1}^{k} l_{i}\right)^{-1} \sum_{i=1}^{k} \Phi_{i} \Phi_{i}^{T}\right\| \to 0.
\end{equation*}
Using Cauchy's inequality,
$$
\mathbb{E}\left\|\left(\sum_{i=1}^{k} l_{i}\right)^{-1} \sum_{i=1}^{k} \Phi_{i} \Upsilon_{i}^{T}\right\|\leq
\sqrt{\mathbb{E}\left\|\left(\sum_{i=1}^{k} l_{i}\right)^{-1} \sum_{i=1}^{k} \Phi_{i} \Phi_{i}^{T}\right\|
\mathbb{E}\left\|\left(\sum_{i=1}^{k} l_{i}\right)^{-1} \sum_{i=1}^{k} \Upsilon_{i} \Upsilon_{i}^{T}\right\|}.
$$
Combining the fact $\mathbb{E}\left\|\left(\sum_{i=1}^{k} l_{i}\right)^{-1} \sum_{i=1}^{k} \Upsilon_{i} \Upsilon_{i}^{T}\right\| $ is bounded,
Slutsky's Theorem implies the last term on the right hand of (\ref{3.53}) tends to zero.
Summarizing above, the first term on the right hand of (\ref{3.50}) converges to zero.

 Next, we focus on second term on the right hand of (\ref{3.50}).
Note that $\mathbb{E} \left\|\bar{U}_{k} \bar{U}_{k}^{T}\right\|\leq k^{-2} \operatorname{tr}\mathbb{E}\left[ \left(\sum_{i=1}^{k}U_i \right)\left(\sum_{i=1}^{k}U_i \right)^T \right] $,
%	$$
%	\bar{U}_{M}=K^{-1} S_{s_{1}-1}^{e_{M}} U_{s_{1}-1}-K^{-1} \sum_{i=s_{1}}^{e_{M}}\left(S_{i}^{e_{M}}+I\right) \alpha_{i-1}P_AS_{i-1}.
%	$$
then
	\begin{equation}
	\label{3.56}\footnotesize
	{\begin{aligned}
	&\mathbb{E}\left\|\left(\sum_{i=1}^{k} l_{i}\right)^{-1} \sum_{i=1}^{k} l_{i}^{2} \bar{U}_{k} \bar{U}_{k}^{T}\right\|\\
	&\leq\left(\sum_{i=1}^{k} l_{i}\right)^{-1} \sum_{i=1}^{k} l_{i}^{2}\mathbb{E} \left\|\bar{U}_{k} \bar{U}_{k}^{T}\right\|\\
&\leq k^{-2} \left(\sum_{i=1}^{k} l_{i}\right)^{-1} \sum_{i=1}^{k} l_{i}^{2}\left(\|S^k_0\|^2\mathbb{E}\|U_0\|^2+\sum_{p=1}^{k}\|(\mathbf{I}_n+S^k_p)\|^2\alpha_{p-1}^2\left(\|\Sigma\|+4L^2\mathbb{E}\left[\|\Delta_{p-1}\|^2\right]+4L\|\Sigma\|^{\frac{1}{2}}\mathbb{E}\left[\|\Delta_{p-1}\|\right] \right)  \right).\\
	\end{aligned}}
	\end{equation}
\cite[(76)-(77)]{zhu2021online} and Lemma \ref{Delta4-bounded} imply  (\ref{3.56}) tends to zero. 
%	$$
%	M^{-1} K^{-1}\left\|S_{s_{1}-1}^{e_{M}}\right\|^{2} \mathbb{E} \left[ \left\|U_{s_{1}-1}\right\|_{2}^{2}\right] \to 0~~~~as~K\to\infty.
%	$$
Then, we only need the last term on the right hand of (\ref{3.50}) tends to zero. 
By Cauchy's inequality,
\begin{equation}
	\label{eq:last U}
	\begin{aligned}
	& \mathbb{E}\left\|\left(\sum_{i=1}^{k} l_{i}\right)^{-1} \sum_{i=1}^{k}\left(\sum_{j=t_{i}}^{i} U_{j}\right)\left(l_{i} \bar{U}_{k}\right)^{T}\right\| \\
	\leq & \sqrt{\frac{\mathbb{E}\left\|\sum_{i=1}^{k}\left(\sum_{j=t_{i}}^{i} U_{j}\right)\left(\sum_{j=t_{i}}^{i} U_{j}\right)^{T}\right\|}{\sum_{i=1}^{k} l_{i}} \frac{\mathbb{E}\left\|\sum_{i=1}^{k} l_{i}^{2} \bar{U}_{k} \bar{U}_{k}^{T}\right\|}{\sum_{i=1}^{k} l_{i}}} .
\end{aligned}
\end{equation}
%	Therefore,
%	$$
%	{M}^{-1} 	K^{-1} \sum_{i=s_{1}}^{e_{M}}\alpha_{i-1}^2\left\|S_{i}^{e_{M}}+I\right\|^{2}\left(\operatorname{tr}(\Sigma)+C\mathbb{E}\left[\|\Delta_{i-1}\|+\|\Delta_{i-1}\|^2\right]\right)\to 0~~~~as~K\to\infty.
%	$$
The left term of (\ref{eq:last U}) is bound by (\ref{3.53}), Slutsky's Theorem implies the last term on the right hand of (\ref{3.50}) tends to zero.
 The proof is complete.
\end{proof}
Next, we move to Step 2.
\begin{lema}
	\label{delta_kto0}
	Suppose that (i) Assumption \ref{assu_1} holds, (ii)
step-size
	{$\alpha_{k} =\alpha_{0} k^{-\beta} \text { with }
	\beta  \in\left(\frac{7}{9}, 1\right)$ and $ \alpha_{0}>0$}, (iii) $a_m = [Cm^\tau]_+$, where $C > 0$ and $\tau > 1/(1-\beta)$. Then,
	$${\small\left\|\frac{\sum_{i=1}^{k}\left(\sum_{j=t_{i}}^{i} x_{j}-l_{i} \bar{x}_{k}\right)\left(\sum_{j=t_{i}}^{i} x_{j}-l_{i} \bar{x}_{k}\right)^{T}
			-
			\sum_{i=1}^{k}\left(\sum_{j=t_{i}}^{i} U_{j}-l_{i} \bar{U}_{k}\right)\left(\sum_{j=t_{i}}^{i} U_{j}-l_{i} \bar{U}_{k}\right)^{T}}{\sum_{i=1}^{k} l_{i}} \right\|\stackrel{d}{\longrightarrow}0}$$
		$\text{as} ~k\to\infty.$
\end{lema}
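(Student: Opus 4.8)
The plan is to realize both batch-means estimators as quadratic forms in translated partial sums and to show that replacing $x_j-x^*$ by the linearized surrogate $U_j$ perturbs the estimator by an $o_p(1)$ amount. Since $\sum_{j=t_i}^i x_j - l_i\bar x_k = \sum_{j=t_i}^i(x_j-x^*) - l_i(\bar x_k-x^*)$, the estimator is invariant under the shift $x_j\mapsto x_j-x^*$, so I work with $x_j-x^*$ throughout. Set $\delta_j:=x_j-x^*-U_j$, $\bar\delta_k:=k^{-1}\sum_{j=1}^k\delta_j$, and for each block put $\Xi_i^x:=\sum_{j=t_i}^i(x_j-x^*)-l_i(\bar x_k-x^*)$, $\Xi_i^U:=\sum_{j=t_i}^i U_j-l_i\bar U_k$, $E_i:=\Xi_i^x-\Xi_i^U=\sum_{j=t_i}^i\delta_j-l_i\bar\delta_k$. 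The difference of the two estimators equals $(\sum_i l_i)^{-1}\sum_i(\Xi_i^U E_i^T+E_i(\Xi_i^U)^T+E_iE_i^T)$, which by Cauchy--Schwarz is dominated in norm by $2\big((\sum_i l_i)^{-1}\sum_i\|\Xi_i^U\|^2\big)^{1/2}\big((\sum_i l_i)^{-1}\sum_i\|E_i\|^2\big)^{1/2}+(\sum_i l_i)^{-1}\sum_i\|E_i\|^2$. The quantity $(\sum_i l_i)^{-1}\sum_i\|\Xi_i^U\|^2$ is $O(1)$ in expectation: it is controlled by exactly the $\Upsilon_i\Upsilon_i^T$ and $l_i^2\bar U_k\bar U_k^T$ terms estimated inside the proof of Lemma \ref{N_nto0} together with Lemma \ref{EU_n^2}, hence bounded in probability. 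So, by Slutsky's theorem, it suffices to prove $(\sum_i l_i)^{-1}\sum_i\|E_i\|^2\to0$ in probability.

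The crucial ingredient is a pointwise almost sure rate for $\delta_j$. By \cite[Theorem 3]{Duchi19} the iterates identify the active face $\{x:Ax=b\}$ after a random, almost surely finite index $N_0$; for $j\ge N_0$ we have $x_j-x^*=P_A(x_j-x^*)=\Delta_j$ and $\epsilon_j=0$, so comparing the recursions for $\Delta_j$ in (\ref{Delta_k1}), for $U_j$ in (\ref{eq:U_k}), and for $\rho_j$ in (\ref{delta_k}) — all sharing the linear part $I_n-\alpha P_A\nabla f(x^*)P_A$ — gives $\delta_j=\Delta_j-U_j=\rho_j+r_j$, where $r_j$ obeys the homogeneous recursion $r_{j+1}=(I_n-\alpha_j P_A\nabla f(x^*)P_A)r_j$. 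By Assumption \ref{assu_1}(iii), $I_n-\alpha_j P_A\nabla f(x^*)P_A$ contracts by a factor $\le 1-\mu\alpha_j+O(\alpha_j^2)$ on $\mathrm{range}(P_A)$, so $\|r_j\|\lesssim_r\exp(-\tfrac\mu2\sum_{l<j}\alpha_l)$ decays faster than any power of $\alpha_j$; combined with Lemma \ref{rho_na.s.}, which gives $\|\rho_j\|=o(\alpha_j^\gamma)$ a.s.\ for any $\gamma$ below $2\delta+1-1/\beta$ with $\delta$ below $1-1/(2\beta)$ (so $\gamma$ may be chosen arbitrarily close to $3-2/\beta$), one obtains $\|\delta_j\|\lesssim_r\alpha_j^\gamma$ for all $j$ — the finitely many $j<N_0$ only add an a.s.\ finite constant — and consequently $\|\bar\delta_k\|\lesssim_r\max(k^{-1},\alpha_k^\gamma)$.

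It then remains to feed these rates into the block sums. Using $\|E_i\|^2\lesssim l_i\sum_{j=t_i}^i\|\delta_j\|^2+l_i^2\|\bar\delta_k\|^2$, the fact that $i/t_i\to1$ along a block (so $\sum_{j=t_i}^i\|\delta_j\|^2\lesssim_r l_i\,\alpha_{t_i}^{2\gamma}$), and the elementary orders implied by $a_m=[Cm^\tau]_+$ with $\tau>1/(1-\beta)$ — namely $l_i\lesssim i^{1-1/\tau}$, $\sum_{i\le k}l_i\asymp k^{2-1/\tau}$, $\sum_{i\le k}l_i^2\asymp k^{3-2/\tau}$, $\alpha_{t_i}\asymp i^{-\beta}$ — both contributions reduce to $(\sum_i l_i)^{-1}\sum_i\|E_i\|^2\lesssim_r k^{1-1/\tau-2\gamma\beta}$. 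Taking $\gamma$ close enough to $3-2/\beta$ makes this exponent negative precisely when $\beta$ lies in $(7/9,1)$ — the strengthening over the range $(2/3,1)$ used for the $U_j$-side results — so the bound tends to $0$ almost surely, in particular in probability, and Slutsky's theorem then yields the stated convergence in distribution. I expect the main obstacle to be exactly this closing step: the $U_j$-side quantities are only controlled in expectation (via Step 1) while $\delta_j$ is only controlled almost surely (via Lemma \ref{rho_na.s.}), so the two Cauchy--Schwarz factors live in different modes of convergence and must be reconciled through Slutsky, and the exponent accounting is tight enough to dictate the step-size restriction to $(7/9,1)$.
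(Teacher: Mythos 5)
Your overall strategy coincides with the paper's: the same shift-invariant decomposition of the difference of the two quadratic forms into cross terms plus a pure discrepancy term, Cauchy--Schwarz to reduce the cross terms, boundedness (in probability) of the $U$-part via Lemma \ref{N_nto0}, an almost-sure rate for the discrepancy $x_j-x^*-U_j$ via Lemma \ref{rho_na.s.}, and Slutsky's theorem to reconcile the two modes of convergence. Your observation that $\delta_j=\rho_j+r_j$ with a super-polynomially decaying homogeneous remainder $r_j$ (accounting for the identification index and the initial condition) is a welcome refinement of the identity $\Delta_j-U_j=\rho_j$ that the paper uses implicitly.

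However, the closing rate computation has a genuine gap. You bound $\bigl\|\sum_{j=t_i}^i\delta_j\bigr\|^2\le l_i\sum_{j=t_i}^i\|\delta_j\|^2\lesssim_r l_i^2\alpha_{t_i}^{2\gamma}$ and arrive at the exponent $1-1/\tau-2\gamma\beta$. But Lemma \ref{rho_na.s.} constrains $\gamma<2\delta+1-1/\beta<3-2/\beta$, so $2\gamma\beta<6\beta-4$, whereas $\tau>1/(1-\beta)$ forces $1-1/\tau>\beta$. Hence for $\beta\in(7/9,4/5]$ one has $2\gamma\beta<6\beta-4\le\beta<1-1/\tau$, so the exponent is strictly positive for every admissible choice of $\gamma$ and $\tau$; for $\beta\in(4/5,5/6]$ it is still positive once $\tau$ is large. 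Your claim that the exponent becomes negative precisely when $\beta\in(7/9,1)$ is therefore not correct, and the argument does not close on the stated range. The paper avoids this loss by not applying the crude quadratic bound to the block sum: it unrolls the $\rho$-recursion over the block, $\sum_{j=t_i}^i\rho_j=S^i_{t_i-1}\rho_{t_i-1}+\sum_{p=t_i}^i(\mathbf{I}_n+S_p^i)\alpha_{p-1}(\zeta_{p-1}+\epsilon_{p-1})$, and exploits that the driving term is quadratically small, $\|\zeta_p\|\lesssim\|\Delta_p\|^2=o(\alpha_p^{2\delta})$, together with $\sum_{p}\|\mathbf{I}_n+S_p^i\|^2\alpha_{p-1}^2\lesssim l_i$. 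This yields $\|\sum_{j=t_i}^i\rho_j\|^2\lesssim t_i^{2\beta-2\beta\gamma}+l_i^2t_i^{-4\beta\delta}$, and the exponent $4\beta\delta$ (which can be pushed close to $4\beta-2>1$ for $\beta>3/4$) is what makes the normalized sum vanish, while the boundary term $t_i^{2\beta-2\beta\gamma}$ is what ultimately produces the threshold $\beta>7/9$. You need this finer block-sum estimate; the pointwise almost-sure rate for $\delta_j$ alone is not sharp enough to cover the full range of $\beta$ and $\tau$ in the statement.
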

\begin{proof}
By the definition of $\rho_{k}$ and $\bar{\rho}_{k}=\frac{1}{k}\sum_{i=1}^{k}\rho_i$,
			\begin{equation}
				\footnotesize
			\label{dec}
			\begin{aligned}
			&\left\|\frac{\sum_{i=1}^{k}\left(\sum_{j=t_{i}}^{i} x_{j}-l_{i} \bar{x}_{k}\right)\left(\sum_{j=t_{i}}^{i} x_{j}-l_{i} \bar{x}_{k}\right)^{T}
				-
				\sum_{i=1}^{k}\left(\sum_{j=t_{i}}^{i} U_{j}-l_{i} \bar{U}_{k}\right)\left(\sum_{j=t_{i}}^{i} U_{j}-l_{i} \bar{U}_{k}\right)^{T}}{\sum_{i=1}^{k} l_{i}} \right\|\\
			&\leq 2  \left\|\left(\sum_{i=1}^{k} l_{i}\right)^{-1} \sum_{i=1}^{k}\left(\sum_{j=t_{i}}^{i} U_{j}-l_{i} \bar{U}_{k}\right)\left(\sum_{j=t_{i}}^{i} \rho_{j}-l_{i} \bar{\rho}_{k}\right)^{T}\right\| 
			+ \left\|\left(\sum_{i=1}^{k} l_{i}\right)^{-1} \sum_{i=1}^{k}\left(\sum_{j=t_{i}}^{i} \rho_{j}-l_{i} \bar{\rho}_{k}\right)\left(\sum_{j=t_{i}}^{i} \rho_{j}-l_{i} \bar{\rho}_{k}\right)^{T}\right\|,
			\end{aligned}
			\end{equation}
where the inequality follows from  Young's inequality.

Using Cauchy’s inequality, we have
\begin{equation}
	\footnotesize
	\label{eq:Ukrho}
	\begin{aligned}
		&  \left\|\left(\sum_{i=1}^{k} l_{i}\right)^{-1} \sum_{i=1}^{k}\left(\sum_{j=t_{i}}^{i} U_{j}-l_{i} \bar{U}_{k}\right)\left(\sum_{j=t_{i}}^{i} \rho_{j}-l_{i} \bar{\rho}_{k}\right)^{T}\right\|\\ 
	&\leq \sqrt{\left\|\left(\sum_{i=1}^{k} l_{i}\right)^{-1} \sum_{i=1}^{k}\left(\sum_{j=t_{i}}^{i} U_{j}-l_{i} \bar{U}_{k}\right)\left(\sum_{j=t_{i}}^{i} U_{j}-l_{i} \bar{U}_{k}\right)^{T}\right\|
	\left\|\left(\sum_{i=1}^{k} l_{i}\right)^{-1} \sum_{i=1}^{k}\left(\sum_{j=t_{i}}^{i} \rho_{j}-l_{i} \bar{\rho}_{k}\right)\left(\sum_{j=t_{i}}^{i} \rho_{j}-l_{i} \bar{\rho}_{k}\right)^{T}\right\|}.
	\end{aligned}
\end{equation}
Claim that the left term of (\ref{eq:Ukrho}) is bounded by Lemma \ref{N_nto0},  
we only need to show the second term on the right hand of (\ref{dec}) tends to zero. By triangle inequality,
\begin{equation}
	\begin{aligned}
		\label{eq:rho}
		&\left\|\left(\sum_{i=1}^{k} l_{i}\right)^{-1} \sum_{i=1}^{k}\left(\sum_{j=t_{i}}^{i} \rho_{j}-l_{i} \bar{\rho}_{k}\right)\left(\sum_{j=t_{i}}^{i} \rho_{j}-l_{i} \bar{\rho}_{k}\right)^{T}\right\|\\
		\lesssim&\left(\sum_{i=1}^{k} l_{i}\right)^{-1} \sum_{i=1}^{k} \left\|\sum_{j=t_{i}}^{i} \rho_{j}\right\|^{2}+\left(\sum_{i=1}^{k} l_{i}\right)^{-1} \sum_{i=1}^{k} l_{i}^{2} \left\|\bar{\rho}_{k}\right\|^{2}.
	\end{aligned}
\end{equation}
Next, we focus on the first term on the right hand of (\ref{eq:rho}).
By the definition of $\rho_{k}$, $Y_{j}^{k}$ and $S_{j}^{k}$ in (\ref{delta_k}), (\ref{Yjk}) and (\ref{Sme}) respectively,
	$$
	\begin{aligned}
	\rho_{k} &=\left(\mathbf{I}_n-\alpha_{k-1} P_AHP_A\right) \rho_{k-1}+\alpha_{k-1}\left(\zeta_{k-1}+\epsilon_{k-1}\right) \\
	&=Y_{t_{i}-1}^{k} \rho_{t_{i}-1}+\sum_{p=t_{i}}^{k} Y_{p}^{k} \alpha_{p-1}\left(\zeta_{p-1}+\epsilon_{p-1}\right)\\
	&=\sum_{p=1}^{k}Y_{p}^{k}\alpha_{p-1}\left(\zeta_{p-1}+\epsilon_{p-1}\right).
\end{aligned}
$$
Then,
\begin{equation}
\label{bardelta}
	{\begin{aligned}
			\left\|\sum_{j=t_{i}}^{i} \rho_{j}\right\|^{2} & \lesssim \left(\left\|S_{t_{i-1}}^{i} \rho_{t_{i}-1}\right\|^{2}+\left(\sum_{p=t_{i}}^{i}\left\|\mathbf{I}_n+S_{p}^{i}\right\| \alpha_{p-1}\left\|\zeta_{p-1}+\epsilon_{p-1}\right\|\right)^{2}\right) \\
			& \lesssim\left\|S_{t_{i-1}}^{i}\right\|^{2} \left\|\rho_{t_{i}-1}\right\|^{2}
			+
			\left(\sum_{p=t_{i}}^{i}\left\|\mathbf{I}_n+S_{p}^{i}\right\|^{2} \alpha_{p-1}^{2}\right)\left(\sum_{p=t_{i}}^{i} \left\|\zeta_{p-1}+\epsilon_{p-1}\right\|^{2}\right),
	\end{aligned}}
\end{equation}
where the first inequality follows from triangle inequality and the second inequality follows from  Cauchy-Schwartz inequality.
	According to \cite[Lemma A.2.]{zhu2021online},
	$
	\left\|S_{t_{i}-1}^{{i}}\right\|^{2} \lesssim t_{i}^{2 \beta}.
	$
	On the other hand, Lemma \ref{rho_na.s.} {implies}
	$$
	\left\|\rho_{t_{i}-1}\right\|^{2} \lesssim o(t_{i}^{-2\beta\gamma} )~~a.s..
	$$
	Following from \cite[Lemma A.2.]{zhu2021online},
	$
	\sum_{p=t_{i}}^{i}\left\|\mathbf{I}_n+S_{p}^{i}\right\|^{2} \alpha_{p-1}^{2}
	\lesssim
	l_i,
	$
and following from \cite[Theorem 3]{Duchi19}, $\left\|\varepsilon_p \right\|=0$  almost surely for sufficiently large $p$. Then,
	$$
	  \sum_{p=t_{i}}^{{i}}\left\|\zeta_{p-1}+\epsilon_{p-1}\right\|^{2}
	  \lesssim_{r} \sum_{p=t_{i}}^{{i}}   C^{2}\left\|\Delta_{p-1}\right\|^{4} \lesssim  l_{i} t_{i}^{-4\beta\delta} ~~a.s..
	$$
Subsequently,
	$$
	\left\|\sum_{j=t_{i}}^{i} \rho_{j}\right\|^{2} \lesssim t_{i}^{2 \beta-2\beta\gamma} +l_{i}^{2} t_{i}^{-4 \beta\delta}.
	$$
	{Note that $\beta\in(7/9,1),~\delta\in(\frac{3-\beta}{8\beta},1-\frac{1}{2\beta})$,  $\gamma\in(\frac{3\beta-1}{4\beta},2\delta+1-\frac{1}{\beta})$, $\left(\sum_{i=1}^{k} l_{i}\right)^{-1} \asymp\left(\sum_{m=1}^{M} n_{m}^{2}\right)^{-1}$ and  $n_{m}=a_{m+1}-a_m$, the first term on the right hand of (\ref{eq:rho})
		\begin{equation}
			\label{bardelta1}
			\left(\sum_{i=1}^{k} l_{i}\right)^{-1} \sum_{i=1}^{k} \left\|\sum_{j=t_{i}}^{i} \rho_{j}\right\|^{2} 
			 \lesssim\left(\sum_{m=1}^{M} n_{m}^{2}\right)^{-1}\left(\sum_{m=1}^{M} \sum_{i=a_{m}}^{a_{m+1}-1}\left(a_m^{2 \beta-2\beta\gamma} +l_{i}^{2} a_m^{-4 \beta\delta}\right)\right) \to 0.
		\end{equation}
	}
On the other hand, by the definition of $\bar{\rho}_{k}$,
$$
\left\| \bar{\rho}_{k}\right\|\leq k^{-2}\left(\sum_{p=1}^{k}\|\mathbf{I}_n+S^k_p\|^2\alpha_{p-1}^2\right)\left(\sum_{p=1}^{{k}}\left\|\zeta_{p-1}+\epsilon_{p-1}\right\|^{2}\right).
$$ 
From \cite[(77)]{zhu2021online}, $\left(\sum_{i=1}^{k} l_{i}\right)^{-1} \sum_{i=1}^{k} l_{i}^{2}\leq n_M$, where $n_M=k-a_M+1$,
%. When $\beta\in(5/7,1),~\delta\in(\frac{1+\beta}{8\beta},1-\frac{1}{2\beta})$
the second term on the right hand of (\ref{eq:rho})
{\begin{equation}
\label{bardelta2}
	\begin{aligned}
	\left(\sum_{i=1}^{k} l_{i}\right)^{-1} \sum_{i=1}^{k} l_{i}^{2} \left\|\bar{\rho}_{k}\right\|^{2}
	\lesssim_{r}
	k^{-4\beta\delta}n_M\to0.
	\end{aligned}
	\end{equation}}
Combining (\ref{bardelta1}) and (\ref{bardelta2}),
%	$$
%	\frac{1}{M} \sum_{k=1}^{M} n_{k}  \left\|\bar{\delta}_{n_{k}}\right\|_{2}^{2} \lesssim_{r}
%	M^{\frac{\beta-2\beta\delta}{1-\beta}}	N^{\frac{2\beta-2\beta\delta-1}{1-\beta}}
%	+
%	{M}^{-\frac{4\beta\delta}{1-\beta}}	N^{\frac{1-\beta-4\beta\delta}{1-\beta}}~~a.s.
%	$$
%	{Since $\beta\in(2/3,1),~\delta\in(0,1-1/2\beta)$, $M=K^{\frac{1-\beta}{3}}$ and $N=\frac{K^{1-\beta}}{M+1}$, for sufficiently large $K$,
%	$$
%M^{\frac{\beta-2\beta\delta}{1-\beta}}	N^{\frac{2\beta-2\beta\delta-1}{1-\beta}}
%+
%{M}^{-\frac{4\beta\delta}{1-\beta}}	N^{\frac{1-\beta-4\beta\delta}{1-\beta}}\to0~~a.s.
%	$$}
(\ref{eq:rho}) converges to zero in distribution.

%In what follows, we focus on the second term on the right hand of (\ref{dec}).
%According to Lemma \ref{N_nto0},  $M^{-1}\sum_{i=1}^{M} n_{i}\left(\bar{U}_{n_{i}}-\bar{U}_{M}\right)\left(\bar{U}_{n_{i}}-\bar{U}_{M}\right)^{T}$ converges to $P_AH^\dagger P_A\Sigma P_AH^\dagger P_A$ in distribution.
%%and\\ ${M}^{-1} \sum_{k=1}^{M} n_{k}\left(\bar{\delta}_{n_{k}}-\bar{\delta}_{M}\right)\left(\bar{\delta}_{n_{k}}-\bar{\delta}_{M}\right)^{T}$ tends to zero
%Then
%by Slutsky's Theorem, the second term on the right hand of (\ref{dec}) tends to zero in distribution. 
The proof is complete.
\end{proof}

With    Lemmas \ref{N_nto0} and \ref{delta_kto0} at hand, obtaining   the consistency of batch-means estimator in distribution is standard.
\begin{thm}
	\label{XX}
	Suppose that (i) Assumptions \ref{assu_1}, \ref{ass-batch}  and \ref{plugass} hold, (ii)
{step-size
	$\alpha_{k} =\alpha_{0} k^{-\beta} \text { with }
	\beta  \in\left(\frac{7}{9}, 1\right)$ and $ \alpha_{0}>0$,} (iii) $a_m = [Cm^\tau]_+$, where $C > 0$ and $\tau > 1/(1-\beta)$. Then,
	$$\left\|\frac{\sum_{i=1}^{k}\left(\sum_{j=t_{i}}^{i} x_{j}-l_{i} \bar{x}_{k}\right)\left(\sum_{j=t_{i}}^{i} x_{j}-l_{i} \bar{x}_{k}\right)^{T}}{\sum_{i=1}^{k} l_{i}}
	-
	P_AH^\dagger P_A\Sigma P_AH^\dagger P_A\right\|\stackrel{d}{\longrightarrow}0$$ $\text{as} ~k\to\infty$.
\end{thm}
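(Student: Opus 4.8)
The plan is to obtain Theorem \ref{XX} as the straightforward combination of Lemmas \ref{N_nto0} and \ref{delta_kto0}, exactly along the three-step scheme outlined before Lemma \ref{Delta4-bounded}. Write
\[
\widehat{\Sigma}_k^{x}:=\frac{\sum_{i=1}^{k}\left(\sum_{j=t_{i}}^{i} x_{j}-l_{i} \bar{x}_{k}\right)\left(\sum_{j=t_{i}}^{i} x_{j}-l_{i} \bar{x}_{k}\right)^{T}}{\sum_{i=1}^{k} l_{i}},\qquad \widehat{\Sigma}_k^{U}:=\frac{\sum_{i=1}^{k}\left(\sum_{j=t_{i}}^{i} U_{j}-l_{i} \bar{U}_{k}\right)\left(\sum_{j=t_{i}}^{i} U_{j}-l_{i} \bar{U}_{k}\right)^{T}}{\sum_{i=1}^{k} l_{i}},
\]
and decompose
\[
\widehat{\Sigma}_k^{x}-\mathrm{P}_{A}H^{\dagger}\mathrm{P}_{A}\Sigma\mathrm{P}_{A}H^{\dagger}\mathrm{P}_{A}
=\bigl(\widehat{\Sigma}_k^{x}-\widehat{\Sigma}_k^{U}\bigr)+\bigl(\widehat{\Sigma}_k^{U}-\mathrm{P}_{A}H^{\dagger}\mathrm{P}_{A}\Sigma\mathrm{P}_{A}H^{\dagger}\mathrm{P}_{A}\bigr).
\]

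Next I would treat the two terms separately, each in the mode of convergence supplied by the corresponding lemma. For the second term, Lemma \ref{N_nto0} gives $\mathbb{E}\bigl\|\widehat{\Sigma}_k^{U}-\mathrm{P}_{A}H^{\dagger}\mathrm{P}_{A}\Sigma\mathrm{P}_{A}H^{\dagger}\mathrm{P}_{A}\bigr\|\to 0$, so by Markov's inequality $\widehat{\Sigma}_k^{U}-\mathrm{P}_{A}H^{\dagger}\mathrm{P}_{A}\Sigma\mathrm{P}_{A}H^{\dagger}\mathrm{P}_{A}\to 0$ in probability. For the first term, Lemma \ref{delta_kto0} (which is why the stronger range $\beta\in(7/9,1)$ is imposed) gives $\|\widehat{\Sigma}_k^{x}-\widehat{\Sigma}_k^{U}\|\stackrel{d}{\to}0$; since convergence in distribution to a deterministic limit is equivalent to convergence in probability, this yields $\widehat{\Sigma}_k^{x}-\widehat{\Sigma}_k^{U}\to 0$ in probability as well.

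Finally I would add the two pieces: a sum of two sequences of random matrices, each converging to $0$ in probability, converges to $0$ in probability (equivalently by Slutsky's theorem), hence $\widehat{\Sigma}_k^{x}-\mathrm{P}_{A}H^{\dagger}\mathrm{P}_{A}\Sigma\mathrm{P}_{A}H^{\dagger}\mathrm{P}_{A}\to 0$ in probability, and therefore $\stackrel{d}{\to}0$, which is exactly the assertion.

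The only point requiring care — and the reason Step 1 and Step 2 are phrased with different modes of convergence — is that one cannot add convergence-in-distribution statements termwise in general, so both contributions must first be routed through convergence in probability before being recombined; this is the content of the short argument above. I do not anticipate any genuine obstacle at this stage: all of the substantive analysis (the fourth-moment control of $\Delta_k$ in Lemma \ref{Delta4-bounded}, the rates for $U_k$ and $\rho_k$ in Lemmas \ref{EU_n^2} and \ref{rho_na.s.}, and the block-sum estimates adapted from \cite{zhu2021online} in Lemmas \ref{N_nto0}–\ref{delta_kto0}) has already been carried out, and Theorem \ref{XX} is the brief bookkeeping step that assembles them.
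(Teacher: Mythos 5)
Your proposal is correct and matches the paper's intent exactly: the paper itself gives no further argument for Theorem \ref{XX} beyond remarking that combining Lemmas \ref{N_nto0} and \ref{delta_kto0} is standard, and your decomposition, the passage through convergence in probability via Markov's inequality and the equivalence of $\stackrel{d}{\to}$ to a constant with convergence in probability, and the final triangle-inequality/Slutsky step are precisely the standard bookkeeping being alluded to. No gaps.
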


\section{Numerical test}
In this section, we  report some preliminary numerical results on the confidence regions of the solution
for SVIP (\ref{svi}).
%In this section, we report some preliminary numerical results on the confidence regions of the  solution to  distributed stochastic optimization.
Following the asymptotic distribution given in Theorem \ref{thm:SA-c},
\begin{equation*}%\label{confidence region}
\left\{y:\left(y-\bar{x}\right)^T\Gamma^{-1}\left(y-\bar{x}\right)\le \frac{1}{k}\chi_\alpha^2(n)\right\}
\end{equation*}
defines an approximate $(1-\alpha)$ confidence region for  the  solution to SVIP, where $\Gamma:=\mathrm{P}_{A}H^{\dagger} \mathrm{P}_{A}\\ \Sigma \mathrm{P}_{A}H^{\dagger} \mathrm{P}_{A}$,   $\bar{x}\define\frac{1}{k}\sum_{t=0}^{k}x_{t}$, $\chi_{\alpha}^{2}(n)$ is defined to be the number that satisfies $P\left(U>\chi_{\alpha}^{2}(n)\right)=\alpha$ for a $\chi^{2}$ random variable $U$ with $n$ degrees of freedom.
Similarly, the approximate  $(1-\alpha)$ confidence region for {the asymptotic} distribution given in Theorem \ref{thm:non-ave} is
\begin{equation*}%\label{confidence region}
\left\{y:\left(y-{x}_k\right)^T\tilde{\Sigma}^{-1}\left(y-{x}_k\right)\le \alpha_{k}\chi_\alpha^2(n)\right\},
\end{equation*}
where $\tilde{\Sigma}$ is defined in (\ref{Sigma1}).

Compared with confidence regions, the
individual confidence intervals of the solution induce a measure of the uncertainty
in each individual component of an estimated solution. Then  it is  able to
assess the uncertainty in an individual component, which thereby allows us
to focus on parameters of specific component of our interest.  Under Theorem \ref{thm:SA-c}, the approximate  $(1-\alpha)$ confidence interval for $j$-th component of  solution is
\begin{equation*}
\left\{y:\bar{x}(j)-z_{\alpha/2}\sqrt{\frac{\Gamma(j,j)}{k}}\le y\le \bar{x}(j)+z_{\alpha/2}\sqrt{\frac{\Gamma(j,j)}{k}} \right\},
\end{equation*}
where $\bar{x}(j)$ and $\Gamma(j,j)$ are the $j$-th and $(j,j)$-th components of  $\bar{x}$ and $\Gamma$ respectively,
$z_{\alpha/2}$ satisfies $P\left(U>z_{\alpha/2}\right)=\alpha/2$ for the standard normal random variable $U$.
Similarly, the approximate  $(1-\alpha)$ individual confidence interval for $j$-th component of solution  under {Theorem \ref{thm:non-ave}} is
\begin{equation*}
\left\{y:{x}_k(j)-z_{\alpha/2}\sqrt{{\alpha_{k}\tilde{\Sigma}(j,j)}}\le y\le {x}_k(j)+z_{\alpha/2}\sqrt{{\alpha_{k}\tilde{\Sigma}(j,j)}} \right\}.
\end{equation*}

{We report the empirical performance of the proposed methods on two examples from \cite{LuMOR13} and \cite{LuLASSO},}  where the first example is
a stochastic linear complementarity problem with  simulated data and the  second example is  a  linear regression problem with real data \cite[Prostate cancer]{Hastie09}.

\subsection{Stochastic linear complementarity problem}
{We first consider} a stochastic linear complementarity problem \cite{LuMOR13}:
\begin{equation}
\label{SVIP}
0 \leq \mathbb{E}[F(x, \xi)] \perp x \geq 0,
\end{equation}
where
\begin{equation*}
F(x, \xi)=\left[\begin{array}{ll}
\xi_{1} & \xi_{2} \\
\xi_{3} & \xi_{4}
\end{array}\right]\left[\begin{array}{l}
x_{1} \\
x_{2}
\end{array}\right]-\left[\begin{array}{l}
15 \\
30
\end{array}\right]+\left[\begin{array}{l}
\xi_{5} \\
\xi_{6}
\end{array}\right]
\end{equation*}
and $\xi=\{\xi_1,\cdots,\xi_{6}\}$ follows uniform distribution over the box
$$
\left\{\xi \in \R^{6} \mid(0,0,0,0,-1,-1) \leq \xi \leq(2,1,2,4,1,1)\right\}.
$$
{Obviously},
  the unique  true solution $x^* =(10,10)^T$ and
the true covariance matrices in Theorem \ref{xn-x*} and Theorem \ref{thm:non-ave} are
 $$\left[\begin{array}{ll}
111.78 & -55.78\\
-111.56 &  83.56
\end{array}\right],  \qquad \left[\begin{array}{ll}
30.31 & -18.61\\
-18.61 &  51.06
\end{array}\right]$$
 respectively.
 
\begin{figure}[htbp]
	\centering
	\subfigure[Asymptotic normality: average]{
		\includegraphics[width=3in]{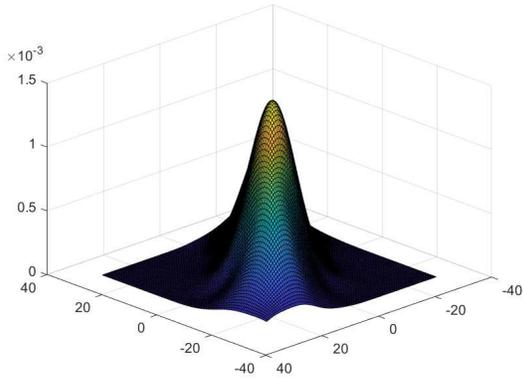}
	}
	\subfigure[Asymptotic normality: last iterate]{
		\includegraphics[width=3in]{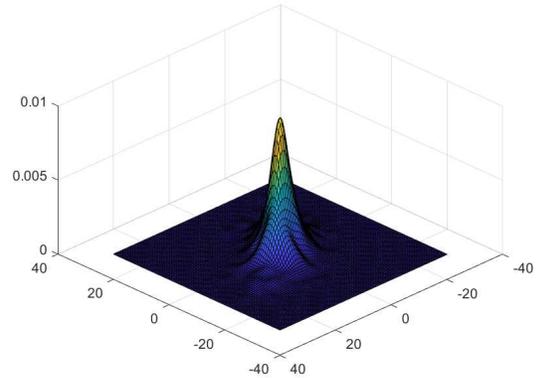}
	}
	\caption{Asymptotic normality of SDA}
		\label{f1}
\end{figure}

In implementing  of  Algorithm \ref{AL1}, the step-size {$\alpha_{k}=0.5 *k^{-0.81}$} and the initial point $x_{0}=(0,0)^T$. We first test the asymptotic normality of iterates of SDA   in Theorems \ref{thm:SA-c} and \ref{thm:non-ave}.
 We do 1000 Monte-Carlo simulations
of running SDA  1000 {iterates} and record the  estimated density in Figure \ref{f1}. Figure  \ref{f1} (a) and Figure  \ref{f1} (b) depict the estimated {densities} of  the average of
iterates SDA and the last iterate of SDA respectively.
Figure \ref{f1} seems to confirm Theorems \ref{thm:SA-c} and \ref{thm:non-ave} since {we can see that the estimated density  is close
to the density of a normal distribution} and  is also confirmed by a Kolmogorov-Smirnov test.

Next, we record the {$90\%$ confidence regions} with number of {iterates}  $k=1000$, $2000$ and $5000$   respectively.  For the stability, we do 50 Monte-Carlo simulations and report the results with the average covariance matrix and the average of {iterates}. {In batch-means method, the sequence $\{a_m\}$ is chosen in the form $a_m = \left[ Cm^\frac{2}{1-\beta}\right] _+$ with $C=1$.}  Figure \ref{f2}  depicts the asymptotic confidence regions  of the solution to {complementarity problem (\ref{SVIP})}, where the red circle ellipse, blue dashed ellipse, green dot ellipse and black solid ellipse denote the confidence regions  {for number of iterates} $1000$, $2000$, $5000$ and the true one respectively.
 As we can observe from Figure \ref{f2} (a), the asymptotic confidence region based on plug-in method at $k=5000$ {almost coincides} with  true confidence region, which indicates the consistency of plug-in method in Theorem \ref{T3.1}. Compared with  Figure \ref{f2} (a),    the asymptotic confidence region based on batch-means methods is reported in  Figure \ref{f2} (b),  where the asymptotic confidence region at $k=5000$ {is small than} the true one. {The underlying reason {may be} that the plug-in method employs more information {such as gradient  of   functions} and batch-means method uses iterates of SDA only. On the other hand, the batch-means estimator tends to underestimate the variance due to the correlation between batches.}  Figure \ref{f2} (c)  verifies  the consistency of  plug-in method in
building the  asymptotic confidence regions based on the last iterate of SDA.  %Moreover, compared \ref{f2} (a) with \ref{f2} (c),  we conclude that the average of iterations  outperforms the last iterate

\begin{figure}[http]
	\centering
	\begin{minipage}[t]{0.31\textwidth}
		\centering
		\subfigure[Plug-in]{
			\includegraphics[height=2.2in,width=2.2in]{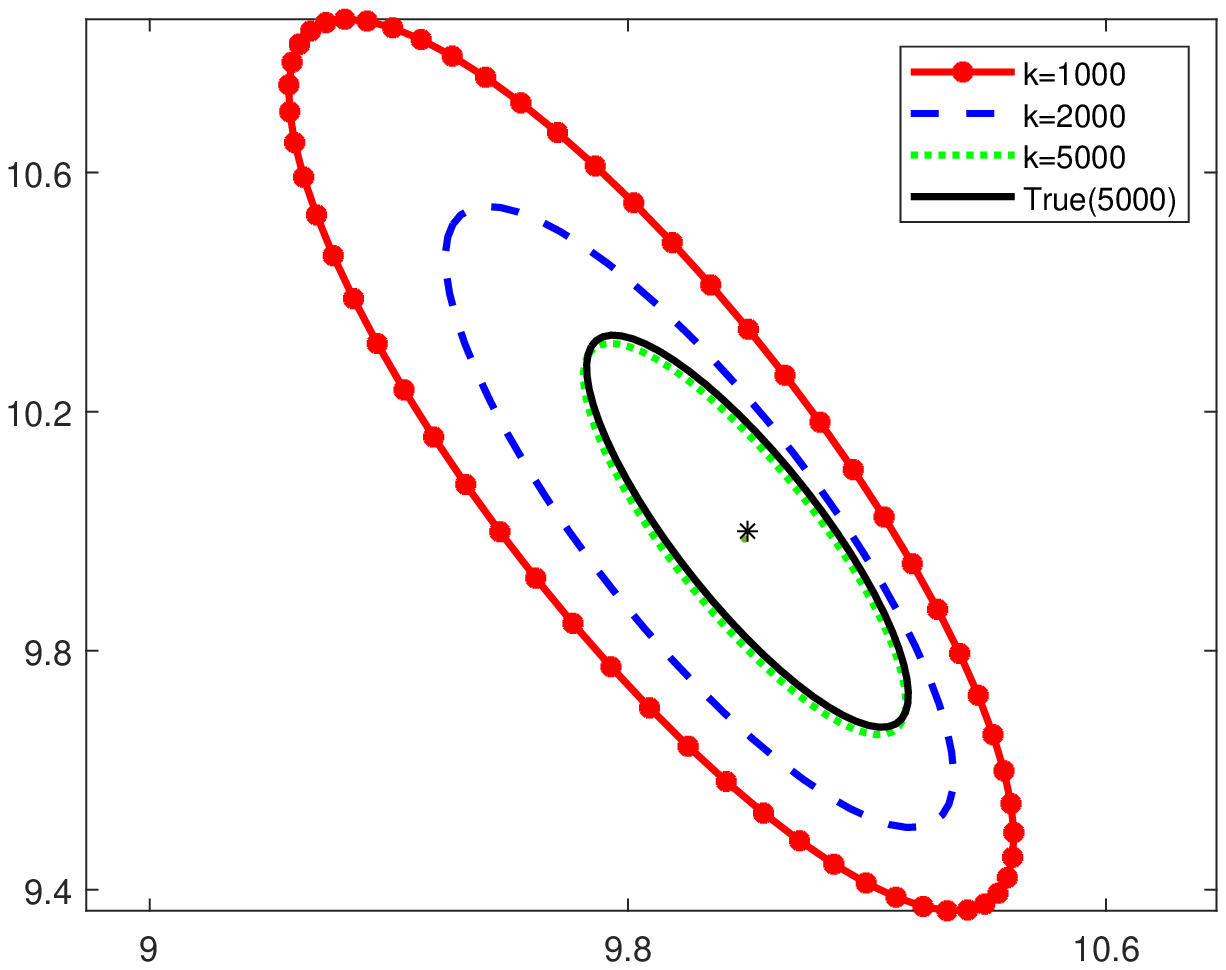}
		}
	\end{minipage}
	\begin{minipage}[t]{0.31\textwidth}
		\centering
		\subfigure[Batch-means]{
			\includegraphics[height=2.2in,width=2.2in]{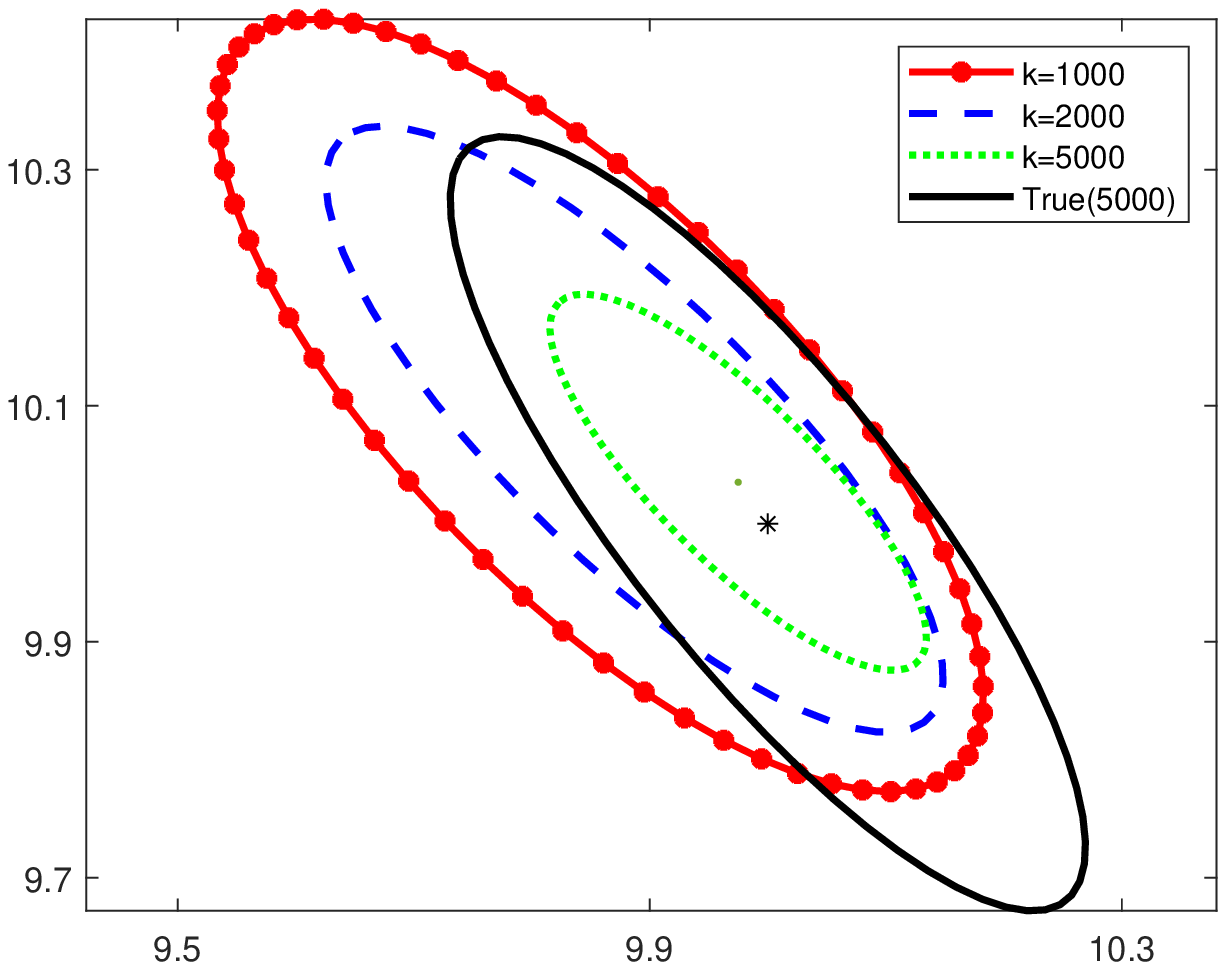}
		}
	\end{minipage}
	\begin{minipage}[t]{0.31\textwidth}
		\centering
		\subfigure[Plug-in (Non-ergodic)]{
			\includegraphics[height=2.2in,width=2.2in]{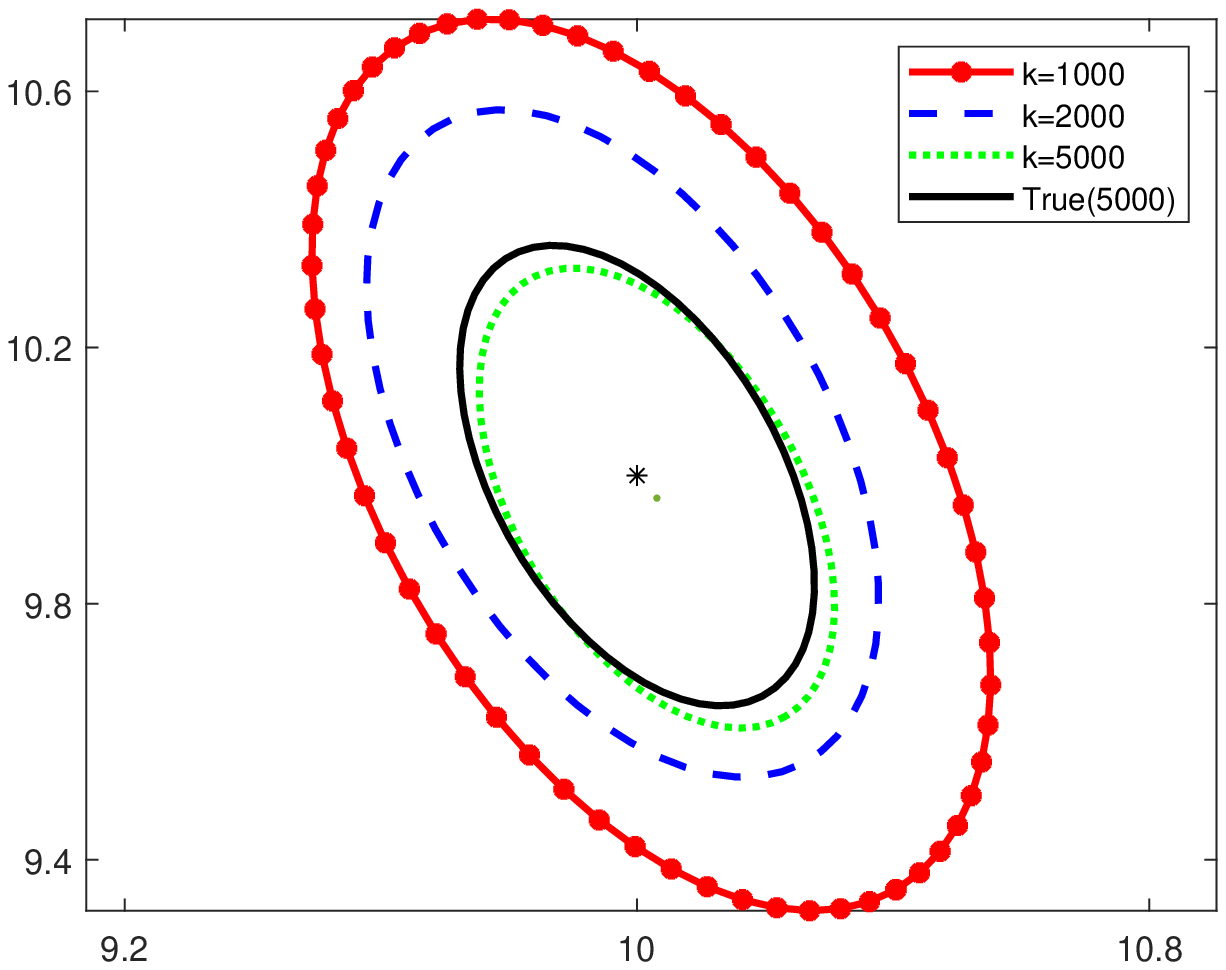}
		}
	\end{minipage}
	\caption{Asymptotic confidence regions for (\ref{SVIP})}
	\label{f2}
\end{figure}

We record the   diagonal elements of covariance matrices {for number of iterates $1000$, $2000$, $5000$ and the true one respectively} in
Table \ref{t1}-\ref{t2}, which characterize the individual confidence intervals of the   solution to complementarity problem (\ref{SVIP}).
{Similar to Figure \ref{f2}, we can conclude that  the plug-in estimators are consistent.}

\begin{table}[h]
	\begin{center}
		\caption{Diagonal elements of $\Gamma$}
		\label{t1}
		\begin{tabular}{l|lll|lll|l}
			\hline
			& \multicolumn{3}{c|}{Plug-in} & \multicolumn{3}{c|}{Batch-means} & \multicolumn{1}{c}{\multirow{2}{*}{TRUE}} \\ \cline{2-7}
			Iterations & 1000 & 2000 & 5000 & 1000 & 2000 & 5000 & \multicolumn{1}{c}{} \\ \hline
			$\Gamma(1,1)$& 114.07 & 111.65 & 112.05
			 & 23.25 & 29.68 & 27.61 & 111.78 \\
			$\Gamma(2,2)$ & 86.52 & 83.99 & 83.15
			& 22.87 & 28.61 & 27.57 & 83.56 \\
			\hline
		\end{tabular}
	\end{center}
\end{table}

\begin{table}[h]
	\begin{center}

		\caption{Diagonal elements of $\tilde{\Sigma}$}
		\label{t2}
		\begin{tabular}{l|lll|l}
			\hline
			Iterations & 1000 & 2000 & 5000 & \multicolumn{1}{c}{TRUE} \\ \hline
		$\tilde{\Sigma}(1,1)$ & 30.63 & 30.36 & 30.42 & 30.31 \\
		$\tilde{\Sigma}(2,2)$ & 52.84 & 51.53 & 50.94 & 51.06 \\
 \hline
		\end{tabular}
	\end{center}
\end{table}

{
	We report the coverage probability of $90\%$ confidence regions in Table \ref{coverrate}.
	We estimate the coverage probability
	 by 1000 replications. From Table \ref{coverrate}, we can observe that the coverage probabilities of the plug-in methods are getting closer to
	the nominal level $90\%$ when the number of iterates $k$ grows larger. However, the coverage probabilities of the batch-means method is only $14\%$. 
	The underestimation problem of the batch-means method is because it neglects the correlation between batches. 
	One possible way to handle this problem is to do Monte-Carlo simulation as in Figure \ref{f2}}.
% \footnote{We do 10 Monte-Carlo simulations with the number of iterates $1000$, where the estimators of iterates and covariance matrix are averaged. Then,  the coverage probability is around $82\%$.}

\begin{table}[h]
	\begin{center}
		\caption{The coverage probability of $90\%$ confidence regions}
		\label{coverrate}
		\begin{tabular}{l|lll}
		\hline
		iterations & 1000 & 2000 & 5000 \\ \hline
		Plug-in & 82 & 84 & 88 \\
		Batch-means & 14 & 14 & 20 \\
		Plug-in (Non-ergodic) & 83 & 83 & 86 \\ \hline
	\end{tabular}
	\end{center}
\end{table}

\subsection{Lasso}
Least absolute shrinkage and selection operator (Lasso) is  a regression analysis method that performs both variable selection and regularization in order to enhance the prediction accuracy and interpretability of the resulting statistical model.
{We consider} lasso on the prostate cancer example \cite{LuLASSO},
\begin{equation}
\label{lasso2}
\min _{\left(\beta_{0}, \beta, t\right) \in \C}\left(\mathbb{E}\left[Y-\beta_{0}-\sum_{j=1}^{8} \beta_{j} X_{j}\right]^{2}+\lambda \sum_{j=1}^{8} t_{j}\right),
\end{equation}
where $X \in \R^{8}$ is the random input vector and $Y \in \R$ is the response variable.
The feasible set $\C$ of problem (\ref{lasso2}) is given by
$$
\C=\left\{\left(\beta_{0}, \beta, t\right) \in \R \times \R^{8} \times \R^{8} \mid t_{j}-\beta_{j} \geqslant 0, t_{j}+\beta_{j} \geqslant 0, j=1, \ldots, 8\right\}.
$$
%Stamey et al. \cite{STAMEY19891084} examined the correlation between the level of prostate-specific antigen and a number of clinical measures in men who were about to receive a radical prostatectomy.
Similar to \cite{LuLASSO},
we first standardize the predictors to have unit variance and split observations into two parts. One part consists of 67 observations, which are the training set in \cite{Hastie09}. We use only these 67 observations in our computation.
In implementing  of  Algorithm \ref{AL1}, we use the same setting of the  step-size and  initial point in the former example, that is,   {$\alpha_{k}=0.5k^{-0.81}$} and $x_{0}=0_{17}$. Moreover, the maximum number of {iterates} is $k=3000$.

\begin{table}[h]
	\centering
	%\fontsize{6.5}{8}
	\selectfont
	\caption{95\% confidence intervals  for {$\lambda=0.45$}}
	\label{t3}
	\begin{tabular}{c|ccc|cc}
		\hline
		          & Ave-Est  & PI CI           & BM CI           & Last-Est         &Non-PI CI \\ \hline
		$\beta_1$ & 0.57 & {[}0.55,0.60{]} & {[}0.56,0.59{]} &0.54             &{[}0.44,0.65{]} \\
		$\beta_2$ & 0.17 & {[}0.13,0.20{]} & {[}0.14,0.20{]} & 0.18            & {[}0.09,0.27{]} \\
		$\beta_3$ & 0    & {[}0,0.04{]}    & {[}0,0.01{]}       & 0               & {[}0,0.10{]} \\
		$\beta_4$ & 0.01 & {[}0.01,0.01{]} & {[}0,0.03{]} & 0               & {[}0,0{]} \\
		$\beta_5$ & 0.09 & {[}0.09,0.09{]} & {[}0.07,0.10{]} & 0.08            & {[}0.08,0.08{]} \\
		$\beta_6$ & 0    & {[}0,0{]}       & {[}0,0{]}       & 0               & {[}0,0{]} \\
		$\beta_7$ & 0    & {[}0,0{]}       & {[}0,0{]}       & 0               & {[}0,0{]} \\
		$\beta_8$ & 0    & {[}0,0{]}       & {[}0,0{]}       & 0               & {[}0,0{]} \\ \hline
	\end{tabular}
\end{table}

\begin{table}[h]
	\centering
	%\fontsize{6.5}{8}
	\selectfont
	\caption{95\% confidence intervals   for {$\lambda=1.49$}}
	\label{t4}
	\begin{tabular}{c|ccc|cc}
		\hline
		          & Ave-Est  & PI CI           & BM CI           & Last-Est         &Non-PI CI \\ \hline
		$\beta_1 $& 0.21 & {[}0.14,0.28{]} & {[}0.18,0.24{]} & 0.17            & {[}0.03,0.31{]} \\
		$\beta_2 $& 0    & {[}0,0{]}       & {[}0,0{]}       & 0               & {[}0,0{]} \\
		$\beta_3 $& 0    & {[}0,0{]}       & {[}0,0{]}       & 0               & {[}0,0{]} \\
		$\beta_4 $& 0    & {[}0,0{]}       & {[}0,0.01{]}       & 0               & {[}0,0{]} \\
		$\beta_5 $& 0    & {[}0,0{]}       & {[}0,0.02{]}       & 0               & {[}0,0{]} \\
		$\beta_6 $& 0    & {[}0,0{]}       & {[}0,0.01{]}       & 0               & {[}0,0{]} \\
		$\beta_7 $& 0    & {[}0,0{]}       & {[}0,0.01{]}       & 0               & {[}0,0{]} \\
		$\beta_8$ & 0    & {[}0,0{]}       & {[}0,0.01{]}       & 0               & {[}0,0{]} \\ \hline
	\end{tabular}
\end{table}

Tables \ref{t3}-\ref{t4} record   the $95\%$ individual confidence intervals   for lasso  parameters  with  penalty terms $\lambda=0.45$  and $1.49$ respectively. We only report the confidence regions of $\beta_1, \cdots, \beta_8$ as
$\beta_0$ is the intercept.
 Similar to \cite{LuLASSO}, we can conclude the importance of predictors  in predicting the
response and the impact  of  penalty term $\lambda$ in sparseness of predictors to problem (\ref{lasso2}). Specifically, for $\lambda=0.45$, the individual confidence intervals of $\beta_{1}$ and $\beta_{2}$ do not contain zero  and  the variances  related to $\beta_{4}$ and $\beta_{5}$ are zero.
Moreover, the individual confidence intervals of all other variables   include zero in them.  As $\beta_4=0.01$ and $\beta_5=0.09$ are close to zero,  we may claim that  the first two predictors are the most useful ones in predicting the response.  On the other hand,  for $\lambda=1.49$, only  the individual confidence interval of $\beta_{1}$ does not contain
zero, which indicates that  the first predictor is more important than the second one.
{We} can also observe from Tables \ref{t3}-\ref{t4} that lasso shrinks the regression coefficients by imposing a penalty parameter  $\lambda$
on their size.

\bibliographystyle{ieeetr}
\bibliography{myref}

\section{Appendix}
\begin{lema}\emph{\cite[Lemma 3.1.1]{Chen06}}\label{lem:rate}
	Suppose $n\times n$-dimension matrix $F_k\rightarrow F$, $F$ is a stable matrix, that is, every eigenvalue of $F$ has strictly negative real part. If step-size $\alpha_{k}$ satisfies {$\alpha_k>0,\alpha_{k}\rightarrow0$ as $k\rightarrow\infty$, $\sum_{k=1}^\infty\alpha_{k}=\infty$}
	and $n$-dimension vectors $\{e_k\},\{\upsilon_k\}$ satisfy the following conditions
	\begin{equation*}\label{rate condition}
	\sum_{k=1}^\infty \alpha_{k}e_k<\infty,~\upsilon_k\rightarrow 0,
	\end{equation*}
	then $\{y_k\}$ defined by the following recursion with arbitrary initial value $y_0$ tends to zero:
	\begin{equation}\label{linear reccursion}
	y_{k+1}=y_k+\alpha_{k}F_ky_k+\alpha_{k}\left(e_k+\upsilon_k\right).
	\end{equation}
\end{lema}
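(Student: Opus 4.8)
The plan is to prove Theorem~\ref{XX} by combining the two preceding lemmas through elementary facts about modes of convergence, exactly as signposted in the three-step outline. Write $V_k^x$ for the batch-means estimator in~(\ref{X_n}), write $V_k^U$ for the auxiliary estimator in~(\ref{UU}) built from the sequence $\{U_k\}$, and set $\Gamma:=P_AH^{\dagger}P_A\Sigma P_AH^{\dagger}P_A$ for the target covariance matrix. The goal is $V_k^x-\Gamma\xrightarrow{d}0$.

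First I would upgrade the $L^1$ statement of Lemma~\ref{N_nto0} to convergence in probability. Since $\mathbb{E}\left[\left\|V_k^U-\Gamma\right\|\right]\to 0$, Markov's inequality gives, for every $\varepsilon>0$, $P\left(\left\|V_k^U-\Gamma\right\|>\varepsilon\right)\le\varepsilon^{-1}\mathbb{E}\left[\left\|V_k^U-\Gamma\right\|\right]\to 0$, so $V_k^U\xrightarrow{P}\Gamma$. Next I would reinterpret Lemma~\ref{delta_kto0}: because its limit is the constant matrix $0$, convergence in distribution and convergence in probability coincide there, so $V_k^x-V_k^U\xrightarrow{d}0$ is the same as $V_k^x-V_k^U\xrightarrow{P}0$. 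Adding the two convergences in probability (the sum of sequences converging in probability converges in probability to the sum of the limits) yields $V_k^x=\left(V_k^x-V_k^U\right)+V_k^U\xrightarrow{P}\Gamma$, that is, $V_k^x-\Gamma\xrightarrow{P}0$. Finally, since convergence in probability to a constant implies convergence in distribution to that constant, the claimed $V_k^x-\Gamma\xrightarrow{d}0$ follows.

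The only genuine work in this argument lives in the two lemmas I am permitted to assume; the reason the proof must route through both is that neither a purely in-expectation nor a purely almost-sure approach closes by itself. Lemma~\ref{N_nto0} handles the $U_k$-estimator in expectation, exploiting the martingale-difference structure of $S_k$ together with the $L^2$ rate $\mathbb{E}\|U_k\|^2\lesssim k^{-\beta}$ from Lemma~\ref{EU_n^2}, whereas Lemma~\ref{delta_kto0} controls the gap between the $x_k$- and $U_k$-estimators only in distribution, via the almost-sure rate $\|\rho_k\|=o(\alpha_k^\gamma)$ of Lemma~\ref{rho_na.s.} and the stronger step-size restriction $\beta\in(7/9,1)$.

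For Theorem~\ref{XX} itself the step I would be most careful about is the bookkeeping of convergence modes rather than any new estimate: combining an $L^1$ limit and a distributional limit is legitimate precisely because both target the \emph{constant} matrix $\Gamma$, so every mode collapses to convergence in probability, where addition is available. I expect no additional difficulty beyond stating these implications cleanly and invoking Lemmas~\ref{N_nto0} and~\ref{delta_kto0}.
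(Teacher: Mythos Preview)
Your proposal addresses the wrong statement. The statement in question is Lemma~\ref{lem:rate}, a purely deterministic result: for the perturbed linear recursion~(\ref{linear reccursion}) with $F_k\to F$ stable, $\alpha_k\to 0$, $\sum_k\alpha_k=\infty$, $\sum_k\alpha_k e_k$ convergent, and $\upsilon_k\to 0$, the iterates satisfy $y_k\to 0$. The paper does not prove this lemma at all; it is simply quoted from \cite[Lemma~3.1.1]{Chen06} and used as a black box (notably in the proof of Theorem~\ref{xn-x*}).

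What you have written is instead a proof of Theorem~\ref{XX}, the distributional consistency of the batch-means covariance estimator. Your argument there---upgrading Lemma~\ref{N_nto0} from $L^1$ to convergence in probability via Markov, reading Lemma~\ref{delta_kto0} as convergence in probability because the limit is the constant $0$, and then adding---is correct and is exactly the ``Step~3'' the paper signposts before Theorem~\ref{XX}. But none of it touches Lemma~\ref{lem:rate}: that lemma involves no probability, no batch-means estimator, no auxiliary sequence $U_k$, and no covariance matrix $\Gamma$. A proof of Lemma~\ref{lem:rate} would proceed by exploiting the stability of $F$ to obtain exponential contraction of the product $\prod_j(\mathbf{I}_n+\alpha_jF_j)$ along the time scale $\sum_j\alpha_j$, and then handling the noise terms $\alpha_k e_k$ and $\alpha_k\upsilon_k$ via summation by parts and the hypotheses $\sum_k\alpha_k e_k<\infty$, $\upsilon_k\to 0$; nothing in your proposal attempts this.
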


\begin{lema}\label{lem:asym norm} \emph{\cite[Theorem 3.3.1]{Chen06}}
	Let $\{y_k\}$ be given by the following recursion with an arbitrarily given initial value:
	\begin{equation}\label{linear reccursion_1}
	y_{k+1}=y_k+\alpha_{k}F_ky_k+\alpha_{k}\left(e_k+\upsilon_k\right).
	\end{equation}
	 Assume the following {conditions hold}:
	\begin{itemize}
		\item [\rm{(i)}]$\alpha_k>0,\alpha_{k}\rightarrow0$ as $k\rightarrow\infty$, $\sum_{k=1}^\infty\alpha_{k}=\infty$ and
		\begin{equation*}
		\alpha_{k+1}^{-1}-\alpha_{k}^{-1}\rightarrow a\ge 0~\text{as}~k\rightarrow\infty;
		\end{equation*}
		\item [\rm{(ii)}] $F_k\rightarrow F$ and $F+\dfrac{a}{2}\mathbf{I}_n$ is stable;
		\item [\rm{(iii)}]
		\begin{equation*}
		{\upsilon_k}=o(\sqrt{\alpha_{k}}),   \quad e_k=\sum_{t=0}^\infty C_ts_{k-t},s_t=0~\text{for}~t<0,
		\end{equation*}
		where $C_t$ are $n\times n$ constant matrices with $\sum_{t=0}^\infty \|C_t\|<\infty$ and $\{s_k,\mathcal{F}_k\}$ is a martingale difference sequence of $n$-dimension satisfying the following conditions
		\begin{equation}\label{c1}
		 \mathbb{E}\left[s_k|\mathcal{F}_{k-1}\right]=0,~\sup_k\mathbb{E}\left[\|s_k\|^2\big|\mathcal{F}_{k-1}\right]\le\sigma~\text{with}~\sigma~\text{being a constant,}
		\end{equation}
		\begin{equation}\label{c2}
		 \lim_{k\rightarrow\infty}\mathbb{E}\left[s_ks_k^T\big|\mathcal{F}_{k-1}\right]=\lim_{k\rightarrow\infty}\mathbb{E}\left[s_ks_k^T\right]\define S_0
		\end{equation}
		and
		\begin{equation}\label{c3}
		\lim_{N\rightarrow\infty}\sup_{k}\mathbb{E}\left[\|s_k\|^21_{\{\|s_k\|>N\}}\right]=0.
		\end{equation}
		Then $\dfrac{y_k}{\sqrt{\alpha_{k}}}$ is asymptotically normal:
		{\begin{equation*}
		\dfrac{y_k}{\sqrt{\alpha_{k}}}\xrightarrow{d}\mathcal{N}(0,S),
		\end{equation*}}
		where
		\begin{equation*}
		S=\int_{0}^{\infty}e^{(F+(a/2)\mathbf{I}_n)t}\sum_{k=0}^\infty C_kS_0\sum_{k=0}^\infty C_k^Te^{(F^T+(a/2)\mathbf{I}_n)t}dt.
		\end{equation*}
	\end{itemize}
\end{lema}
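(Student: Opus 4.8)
The plan is to combine the two preparatory lemmas via the elementary fact that convergence in distribution to a constant coincides with convergence in probability. Write $T_k^{x}$ for the batch-means estimator appearing in the statement, namely
\begin{equation*}
T_k^{x}:=\left(\sum_{i=1}^{k} l_{i}\right)^{-1}\sum_{i=1}^{k}\left(\sum_{j=t_{i}}^{i} x_{j}-l_{i} \bar{x}_{k}\right)\left(\sum_{j=t_{i}}^{i} x_{j}-l_{i} \bar{x}_{k}\right)^{T},
\end{equation*}
write $T_k^{U}$ for the analogous estimator (\ref{UU}) built from the auxiliary sequence $U_k$ in (\ref{eq:U_k}), and set $\Gamma:=P_AH^\dagger P_A\Sigma P_AH^\dagger P_A$.

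First I would record precisely what each lemma delivers. Lemma \ref{N_nto0} states $\mathbb{E}\left[\|T_k^{U}-\Gamma\|\right]\to 0$; since $L^1$-convergence implies convergence in probability, this gives $\|T_k^{U}-\Gamma\|\to 0$ in probability. Lemma \ref{delta_kto0} gives $\|T_k^{x}-T_k^{U}\|\stackrel{d}{\longrightarrow}0$, and because the limit here is the deterministic constant $0$, this distributional statement is equivalent to $\|T_k^{x}-T_k^{U}\|\to 0$ in probability. Thus both differences tend to zero in probability.

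Next I would assemble the bound by the triangle inequality,
\begin{equation*}
\|T_k^{x}-\Gamma\|\le \|T_k^{x}-T_k^{U}\|+\|T_k^{U}-\Gamma\|,
\end{equation*}
and invoke the fact that a sum of two nonnegative quantities each tending to zero in probability also tends to zero in probability (a one-line union-bound, or equivalently Slutsky's theorem). This yields $\|T_k^{x}-\Gamma\|\to 0$ in probability, and translating back (convergence in probability to the constant $0$ is the same as convergence in distribution to $0$) gives exactly $\|T_k^{x}-\Gamma\|\stackrel{d}{\longrightarrow}0$ as $k\to\infty$, which is the claim.

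Since the genuine analytic work has already been carried out in Lemmas \ref{N_nto0} and \ref{delta_kto0}, there is no serious obstacle at this stage; the only point requiring care is the bookkeeping on the different modes of convergence, in particular recognizing that the $L^1$-estimate for the $U$-based estimator and the distributional estimate for the discrepancy may be merged only after both are reduced to convergence in probability. The substantive difficulty of the whole program lives upstream: Lemma \ref{N_nto0} couples an expectation-level estimate for the $U$-based estimator with the control $\mathbb{E}\|\Delta_k\|^2\to 0$ and the decay rate $\mathbb{E}\|U_k\|^2\lesssim k^{-\beta}$, while Lemma \ref{delta_kto0} must push the residual $\rho_k$ through the batch structure using its almost sure rate $o(\alpha_k^\gamma)$, where the restriction $\beta\in(7/9,1)$ is precisely what makes the exponent accounting in (\ref{bardelta1})--(\ref{bardelta2}) close. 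At the level of Theorem \ref{XX} itself all of that is packaged, and only the convergence-mode combination remains.
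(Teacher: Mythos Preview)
Your proposal proves the wrong statement. The lemma under discussion is the appendix result \emph{\cite[Theorem 3.3.1]{Chen06}} on the asymptotic normality of $y_k/\sqrt{\alpha_k}$ for the linear recursion (\ref{linear reccursion_1}); it has nothing to do with batch-means estimators, the sequences $x_k$, $U_k$, or the matrix $P_AH^\dagger P_A\Sigma P_AH^\dagger P_A$. What you have written is a (perfectly reasonable) argument for Theorem~\ref{XX}, and indeed that theorem is obtained in the paper exactly by combining Lemmas~\ref{N_nto0} and~\ref{delta_kto0} as you describe. But none of that bears on the present lemma.

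As for the lemma itself, the paper does not prove it: it is quoted verbatim from Chen's monograph and used as a black box in the proof of Theorem~\ref{thm:non-ave}. A genuine proof would require the classical machinery for asymptotic normality of stochastic approximation schemes---rewriting $y_k/\sqrt{\alpha_k}$ via the product $\prod(\mathbf{I}_n+\alpha_jF_j)$, controlling the deterministic part using stability of $F+\tfrac{a}{2}\mathbf{I}_n$, and applying a martingale central limit theorem to the noise term $\sum \alpha_j e_j$ under the Lindeberg-type condition (\ref{c3}). Your proposal contains none of these ingredients.
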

\end{document}